\newcommand{\RN}[1]{%
	\textup{\uppercase\expandafter{\romannumeral#1}}%
}
\newtheorem{theorem}{Theorem}%  meant for continuous numbers
\newtheorem{lemma}[theorem]{Lemma}
\newtheorem{remark}{Remark}%
\newtheorem{definition}{Definition}%
\begin{document}

\title[Article Title]{Incremental data compression for PDE-constrained optimization with a data assimilation application}

%%=============================================================%%
%% GivenName	-> \fnm{Joergen W.}
%% Particle	-> \spfx{van der} -> surname prefix
%% FamilyName	-> \sur{Ploeg}
%% Suffix	-> \sfx{IV}
%% \author*[1,2]{\fnm{Joergen W.} \spfx{van der} \sur{Ploeg} 
%%  \sfx{IV}}\email{iauthor@gmail.com}
%%=============================================================%%

\author[1]{\fnm{Xuejian} \sur{Li}}\email{xuejial@clemson.edu}

\author*[2]{\fnm{John R.} \sur{Singler}}\email{singlerj@mst.edu}
%\equalcont{These authors contributed equally to this work.}

\author[2]{\fnm{Xiaoming} \sur{He}}\email{hex@mst.edu}
%\equalcont{All authors contributed equally to this work.}

\affil[1]{\orgdiv{School of Mathematical and Statistical Sciences}, \orgname{Clemson University}, \orgaddress{\city{Clemson}, \postcode{29364}, \state{SC}, \country{USA}}}

\affil[2]{\orgdiv{Department of Mathematics and Statistics}, \orgname{Missouri University of Science and Technology}, \orgaddress{\city{Rolla}, \postcode{65409}, \state{MO}, \country{USA}}}

%%==================================%%
%% Sample for unstructured abstract %%
%%==================================%%

\abstract{
		We propose and analyze an inexact gradient method based on incremental proper orthogonal decomposition (iPOD) to address the data storage difficulty in time-dependent PDE-constrained optimization, particularly for a data assimilation problem as a detailed demonstration for the key ideas. The proposed method is proved robust by rigorous analysis. We first derive a sharp data compression error estimate of the iPOD with the help of Hilbert-Schmidt operators. Then we demonstrate a numerical PDE analysis to show how to properly choose the Hilbert space for the iPOD data compression so that the gradient error is under control. We further prove that for a convex problem with appropriately bounded gradient error, the inexact gradient method achieves the accuracy level of the optimal solution while not hurting the convergence rate compared with the usual gradient method. Finally, numerical experiments are provided to verify the theoretical results and validate the proposed method.
}

\keywords{PDE-constrained optimization, incremental proper orthogonal decomposition, data compression, data assimilation}

%%\pacs[JEL Classification]{D8, H51}

\pacs[MSC Classification]{49M41, 35Q93, 65N21}

\maketitle

\section{Introduction}\label{sec1}

Time-dependent PDE-constrained optimization is commonly witnessed in many applications, such as optimal control \cite{MR1871460, MR1759904, lewisvrabiesyrmos2012, MR0271512, MR0204181}, data assimilation \cite{altafelgharamtiheeminkhoteit2013,MR3062044,MR2289211,MR3345226}, and other inverse identification problems \cite{MR1861508,epabielak2008,MR1991790,hattasmartinstadlerwilcox2012,MR4242001,MR3676924,MR3465299}. Gradient-based methods, such as steepest descent, conjugate gradient, and quasi-Newton methods, are the central machinery to solve this type optimization problem, in which calculating the gradient is a key step. To obtain the gradient, one usually needs to solve a forward time-dependent PDE and store all forward solutions for later use in solving a so-called backward adjoint equation. However, during this procedure, the storage of forward solution data is prohibitive when large-scale temporal and spatial simulations are involved \cite{Gl2022,Hu2023}, which makes the gradient calculation very difficult.  

There are multiple candidates to deal with this data storage difficulty, such as checkpoint \cite{MR2520289,MR1856306,griewank_1992,griewank_walther_2000}, data compression \cite{Margcompress2021,Muthucompress2021,MR4026164,LiiSVD2021}, and others \cite{GotsReMemory2015}. 
The checkpoint method is frequently used to reduce the memory requirement for PDE-constrained optimization; however, this method may largely increase the computation time due to many recalculations \cite{MR1856306,heuveline_walther_2006}. 
In contrast, data compression, such as proper orthogonal decomposition (POD) \cite{brand2002,MR3775096,MR4017489,MR3986356}, can save memory effectively for large-scale low-rank data but only introduce minimal extra computational time, %(compared to the optimization time), 
which is attractive. The POD 
is used to handle memory limitations for optimization in the literature \cite{MR4026164,LiiSVD2021}. However, it is still new to this field and under-investigated. This paper will thoroughly consider POD as a data compression technique to address the data storage issue for time-dependent PDE-constrained optimization. 

%Proper orthogonal decomposition (POD) is a method attempting to find an optimal low-dimensional basis to best approximate a set of given data. The optimal basis is called POD modes. 
The POD attempts to find an optimal low-dimensional basis to best approximate a set of given data, which is usually solved by singular value decomposition (SVD),
% \begin{align}\label{svdi}
	% U^{m\times n}\approx V^{m\times r}\Sigma^{r\times r}\left(W^{n\times r}\right)^T, ~~r\||n,
	%\end{align}
	%Here, $m,n$ and $r$ are positive integers, and $V^{m\times r}\Sigma^{r\times r}W^{r\times n}$ is the first $r^{th}$ order truncated SVD of $U^{m\times n}$. The optimal basis (or POD modes) consists of the column vector of $ V^{m\times r}$.
	and the optimal basis (POD modes) consists of the dominant singular vectors.
	The POD is often referred to as a model order reduction approach to simplify the structure of complicated systems, which has shown numerous applications in simulating ODEs, PDEs, and many other science problems, see e.g., \cite{MR1868765,MR2136757,MR1921667,MR2387420,MR2044719,MR1204279,MR3800253,MR3809542,MR3672145,MR3608749,MR4159637,MR2998478,MR3345226}. 
	Also, due to the low dimension representation, POD can be considered as a data compression technique to optimize the storage of large-scale data and address computer memory limitations. %which will be the main interest of this paper to deal with the aforementioned forward-backward PDE solving in optimization problem.   
	%As shown in (\ref{svdi}), POD has close connection to SVD and one key step of POD is to compute the SVD of the given data matrix. There are many methods, such as in R, Python, and Matlab, to quickly calculate the SVD of a moderate size data matrix, which require a full storage of given data matrix first and compute the SVD then, and are called offline method. However, the data size are growing rapidly nowadays, offline method is impractical in lots of scenarios because of computer memory limitation. 
	Unfortunately, many SVD computing methods, such as those in R, Python, and Matlab, are offline and require storing the entire data matrix first before computing the SVD, which still requests large computer memory. To overcome this dilemma, researchers devoted many efforts to figuring out other reasonable SVD computation algorithms \cite{stange2008,MR3428564,MR3340151,MR3594691,MR3912526,MR3341249,MR3763927}. The iPOD (or iSVD) is one of the successful algorithms that was first proposed by Brand \cite{brand2002} and afterward significantly extended and improved by others \cite{MR3775096,MR4017489,MR3986356,Yw2022}. The iPOD uses an online projection to process data column by column and incrementally update the POD until the last column data is processed. Attractively, the used column data is no longer kept and thus there is no necessity to store the data matrix.  
	This computing property is ideal to handle the data storage difficulty for time-dependent PDE-constrained optimization.
	
	%The main interest of this paper is to introduce iPOD to save computer memory for optimization problems.
	Based on a data assimilation problem as the demonstration example, we utilize the iPOD to incrementally produce the POD of the forward PDE solution as it is numerically simulated. Again, one key feature of this step is that once the solution has been used to update the POD it no longer needs to be stored, which allows us to effectively compress the large-scale solution data. We then decompress the data one by one as it is needed for solving the backward adjoint equation. In this way, the data storage difficulty is addressed and the gradient calculation is possible. %thereby driving the gradient-based optimization iterations towards to optimal solution. 
	This approach is an inexact gradient method since the compressed (approximated) data is involved during calculation, and errors will be introduced to the gradient. 
	%Note that there will be error introduced to gradient since the compressed or approximated forward solution is used. Therefore, we call this inexact gradient method. 
	
	To rigorously regulate the gradient error and demonstrate the robustness of this method, we provide a theoretical analysis. First, we provide an accurate error estimate between the exact and compressed data while iPOD is utilized. The properties of Hilbert-Schmidt operators play a fundamental role in estimation. Second, with suitable Hilbert space for the iPOD, we show that the gradient error introduced by iPOD is under control. To provide insight on properly applying iPOD in different scenarios, the gradient error analysis will be presented based on a linear and a nonlinear PDE constraint, respectively. Third, with an appropriate iPOD setup, we prove that the inexact gradient method reduces memory requirements without hurting the convergence rate and accuracy of the optimal solution compared to the usual gradient method. 
	
	The paper is organized as follows. In Section \ref{Background}, we recall preliminary knowledge. In Section \ref{Inexactgradientmethod}, we propose the inexact gradient method based on iPOD data compression for time-dependent PDE-constrained optimization. In Section \ref{SDA}, we prove that the inexact gradient method achieves the optimal solution with similar computational time compared to usual gradient methods. In Section \ref{NE}, we provide numerical tests to validate the proposed method. Finally, we draw conclusions in Section \ref{Conclusion}.

	\section{Preliminaries}\label{Background}
	This section introduces preliminaries of POD, Hilbert-Schmidt operators, and iPOD (or iSVD).
	\subsection{POD and Hilbert-Schmidt operators}
	% POD has close connection to SVD, thus we first review the SVD of linear compact operator. 
	
	Denote the norm of a Hilbert space $X$ as $\|\cdot\|_X$ that is induced by inner product $(\cdot,\cdot)_X$. If $X$ is a finite $m$-dimensional Hilbert space, then $X$ can be viewed as a $m$-dimensional weighted Euclidean space $\mathbb{R}^m_M$, i.e., $\forall x,y\in X$, $(x,y)_X=(\vec {x},\vec{y})_{\mathbb{R}_M^m}=\vec{y}^TM\vec{x}$, where $\vec {x}$ and $\vec{y}$ denote the vector representation of $x$ and $y$ under a given basis of $X$, $\vec{y}^T$ denotes the transpose of $\vec{y}$, and $M$ is a positive definite weight matrix. We denote $\mathscr{L}(X,Y)$ as the collection of linear bounded operators that map $X$ to $Y$. In addition, we denote $\langle\cdot,\cdot\rangle$ as a generic duality pair between a Hilbert space and its dual space.

	The POD (or optimal data compression) problem is described as follows: Given a set of functional data $\{u_j\}_{j=1}^n\subset X$ and a value $ r $, $1\leq r \ll n$, find the $r$ optimal orthonormal basis vectors $\{\Phi_j\}_{j=1}^r\subset X $ % and  coefficients $\{\alpha_{ij}\}_{i=1,1}^{r,k}\in \mathbb{R}$ 
	such that the data compression error $E_r$ is minimized:
	\begin{align}\label{PODproblem}
		%\arg \min_{\{\phi_i\}_{i=1}^r\subset X, \alpha_{ij}\in \mathbb{R}}
		E_r=\sum_{j=1}^n\|u_j-\Pi_{X_r}u_j\|_X,%~~\text{is minimized,}
	\end{align}
	where $\Pi_{X_r}:X\mapsto X_r$ is the orthogonal projection onto $X_r=\mathrm{span}\{\Phi_j\}_{j=1}^r\subset X$. 
	%In order to solve problem (\ref{PODproblem}), we introduce a linear bounded operator
	In order to solve problem (\ref{PODproblem}), we introduce the POD operator $\mathcal{U}: \mathbb{R}^n\mapsto X$ defined by
	\begin{align}\label{svdoperator}
		\mathcal{U}z=\sum_{j=1}^nz_j u_j,~~ z=\begin{bmatrix}z_1&z_2& \cdots & z_n
		\end{bmatrix}^T\in \mathbb{R}^n.%~~\text{and~~} c_i\in X.
	\end{align}
	The operator $\mathcal{U}: \mathbb{R}^n\mapsto X$ is finite rank and thus compact and has a SVD:  $\mathcal{U}z=\sum_{j=1}^s \sigma_j(z,\eta_j)_{\mathbb{R}^n}\psi_j$, $\forall z\in \mathbb{R}^n$, $s\leq n$. The $\{\sigma_j\}_{j=1}^s$ are the non-zero singular values of $\mathcal{U}$, the $\{\psi_j\}_{j=1}^s\in X$ and $\{\eta_j\}_{j=1}^s\in \mathbb{R}^n$ are the corresponding singular vectors. It turns out that problem (\ref{PODproblem}) is equivalent to finding the best $r$ rank Hilbert-Schmidt approximation of $\mathcal{U}$ \cite{John2014,Siro1987}:
	\begin{align}\label{H1}
		\min_{K\in \mathscr{L}(\mathbb{R}^n, X),~ \mathrm{rank}(K)=r} \| \mathcal{U}-K \|_\mathrm{HS}=\sum_{j=1}^n\|u_j-\Pi_{X_r}u_j\|_X.
	\end{align}
	%From operator approximation theory, 
	The operator $K$ is obtained by the first $r^{th}$ order truncated SVD of $\mathcal{U}$: $\mathcal{U}_rz=\sum_{j=1}^r \sigma_j(z,\eta_j)_{\mathbb{R}^n}\psi_j$ $\forall z\in \mathbb{R}^n$, $r\leq s $, and the singular vectors $\{\psi_j\}_{j=1}^r$ are the $r$ optimal basis vectors $\{\Phi_j\}_{j=1}^r$ to solve problem (\ref{PODproblem}). The optimal truncation error (or data compression error) $E_r$ is equal to:
	\begin{align}\label{H2}
		E_r= \sum_{j=1}^n\|u_j-\Pi_{X_r}u_j\|_X^2=\|\mathcal{U}-\mathcal{U}_r\|_\mathrm{HS}=\sum_{j\geq r+1 }\sigma_j^2. 
	\end{align}
	For more understanding of SVD, POD, and Hilbert-Schmidt operators, one can refer to \cite{gohberG1990,MR751959}.
	
	As illustrated in (\ref{H1})-(\ref{H2}), 
	%where $c_i^r=\sum_{j=1}^r\alpha_{ij}\phi_j,~i=1,2,3,\cdots,k$. 
	Hilbert-Schmidt operators are important machinery for solving the POD problem. We here briefly introduce Hilbert-Schmidt operators and their properties, which will be useful for the analysis later.  
	
	\begin{definition}\label{HSd}
		Let $X$ and $Y$ be separable Hilbert spaces. An operator $B\in \mathscr{L}(X,Y)$ is Hilbert-Schmidt if
		\begin{align}
			\sum_{j\geq 1}\|Bx_j\|^2_Y<\infty~~\text{for any total orthonormal sequence } \{x_j\}_{j\geq 1}\subset X.
		\end{align}
		The Hilbert-Schmidt norm of $B$ is defined by $\|B\|_\mathrm{HS}=(\sum_{j\geq 1}\|Bx_j\|^2_Y)^{\frac{1}{2}}$.
	\end{definition}

	\begin{lemma}\label{SVDC}\cite[Section \RN{5}]{DavisFun}
		Let $X$ and $Y$ be separable Hilbert spaces. If an operator $B\in \mathscr{L}(X,Y)$ is compact and its singular values $\{\sigma_j\}_{j\geq 1}$ satisfy $\sum_{j\geq 1} \sigma_j^2<\infty$, then $B$ is Hilbert-Schmidt and $\|B\|_\mathrm{HS}^2=\sum_{j\geq 1} \sigma_i^2$. 
	\end{lemma}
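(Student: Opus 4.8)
The plan is to reduce everything to the Schmidt (singular value) decomposition of the compact operator $B$ and then to evaluate the defining series of Definition~\ref{HSd} on an \emph{arbitrary} total orthonormal sequence of $X$. First I would record the singular system. Since $B\in\mathscr{L}(X,Y)$ is compact, the operator $B^*B\in\mathscr{L}(X,X)$ is compact, self-adjoint, and nonnegative, so by the spectral theorem for compact self-adjoint operators there is an orthonormal system $\{e_j\}_{j\geq1}\subset X$ of eigenvectors with positive eigenvalues $\sigma_j^2$, that is, $B^*Be_j=\sigma_j^2 e_j$, whose closed linear span is $(\ker B)^{\perp}$ (using $\ker(B^*B)=\ker B$). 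Putting $f_j:=\sigma_j^{-1}Be_j$ gives an orthonormal system $\{f_j\}_{j\geq1}\subset Y$, and decomposing any $x\in X$ into its components in $\ker B$ and in $(\ker B)^{\perp}$ yields the representation
\begin{align}\label{eq:schmidt}
	Bx=\sum_{j\geq1}\sigma_j\,(x,e_j)_X\,f_j,\qquad x\in X .
\end{align}

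Next, let $\{x_k\}_{k\geq1}$ be any total orthonormal sequence in $X$. Applying \eqref{eq:schmidt} and the orthonormality of $\{f_j\}_{j\geq1}$ (Pythagoras' identity in $Y$),
\begin{align}\label{eq:normxk}
	\|Bx_k\|_Y^2=\Bigl\|\sum_{j\geq1}\sigma_j(x_k,e_j)_X f_j\Bigr\|_Y^2=\sum_{j\geq1}\sigma_j^2\,\big|(x_k,e_j)_X\big|^2 .
\end{align}
Summing over $k$, interchanging the two sums — permissible since every term is nonnegative (Tonelli) — and invoking Parseval's identity for the orthonormal basis $\{x_k\}_{k\geq1}$, namely $\sum_{k\geq1}|(x_k,e_j)_X|^2=\|e_j\|_X^2=1$, gives
\begin{align}\label{eq:dsum}
	\sum_{k\geq1}\|Bx_k\|_Y^2=\sum_{j\geq1}\sigma_j^2\sum_{k\geq1}\big|(x_k,e_j)_X\big|^2=\sum_{j\geq1}\sigma_j^2 .
\end{align}
Since $\sum_{j\geq1}\sigma_j^2<\infty$ by hypothesis, \eqref{eq:dsum} shows that $\sum_{k\geq1}\|Bx_k\|_Y^2$ is finite for \emph{every} total orthonormal sequence; hence $B$ is Hilbert--Schmidt and $\|B\|_{\mathrm{HS}}^2=\sum_{j\geq1}\sigma_j^2$.

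I expect the only delicate point to be the first step: producing \eqref{eq:schmidt} rests on the spectral theorem for compact self-adjoint operators, and one must be slightly careful when $\ker B\neq\{0\}$, in which case $\{e_j\}$ is not an orthonormal basis of $X$ — but this causes no harm, since $B$ annihilates $\ker B=\big(\overline{\mathrm{span}\{e_j\}}\big)^{\perp}$, and \eqref{eq:normxk}--\eqref{eq:dsum} use only that $\{x_k\}$, not $\{e_j\}$, is total. Once \eqref{eq:schmidt} is in hand, the remaining steps are routine manipulations with nonnegative double series. An alternative that avoids compactness in the computation itself is to first prove directly that $\sum_k\|Bx_k\|_Y^2$ is independent of the chosen orthonormal basis, via $\sum_k\|Bx_k\|_Y^2=\sum_k\sum_l|(Bx_k,y_l)_Y|^2=\sum_l\|B^*y_l\|_X^2$ for orthonormal bases $\{x_k\}\subset X$ and $\{y_l\}\subset Y$, and then to evaluate the sum on the basis adapted to the singular system, with compactness entering only to guarantee that such a singular system exists.
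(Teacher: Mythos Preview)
Your argument is correct and is the standard textbook proof of this fact. Note, however, that the paper does not supply its own proof of Lemma~\ref{SVDC}: the lemma is simply cited from the reference \cite[Section~\RN{5}]{DavisFun} and used as a known result, so there is no ``paper's proof'' to compare against. Your write-up would serve perfectly well as a self-contained justification should one be desired.
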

	\vspace{1mm}
	\begin{remark}
		Hilbert-Schmidt operators are compact, but not all compact operator are Hilbert-Schmidt.
	\end{remark}
	\vspace{1mm}
	
	Since $\mathcal{U}$ defined by (\ref{svdoperator}) is finite rank and thus Hilbert-Schmidt and compact, one can further verify the following Lemma with help of Definition \ref{HSd}.
	\vspace{1mm}
	\begin{lemma}\label{HSnorm}\cite[Lemma A.3]{Sarah20}
		$\mathcal{U}$ is Hilbert-Schmidt and $\|\mathcal{U}\|_\mathrm{HS}^2=\sum_{j=1}^{n}\|u_j\|_X^2$.
	\end{lemma}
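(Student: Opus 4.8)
The plan is to verify the two assertions in turn, both of which follow quickly from the finite-rank structure of $\mathcal{U}$ together with the basis-independence of the Hilbert-Schmidt norm. First, for the Hilbert-Schmidt property: since $\mathbb{R}^n$ is finite dimensional, $\mathcal{U}$ is automatically bounded and of rank at most $n$, hence compact with only finitely many nonzero singular values $\sigma_1,\dots,\sigma_s$ with $s\le n$, exactly as recorded in the SVD $\mathcal{U}z=\sum_{j=1}^s \sigma_j(z,\eta_j)_{\mathbb{R}^n}\psi_j$. In particular $\sum_{j\geq 1}\sigma_j^2=\sum_{j=1}^s\sigma_j^2<\infty$, so Lemma \ref{SVDC} applies and yields that $\mathcal{U}$ is Hilbert-Schmidt with $\|\mathcal{U}\|_\mathrm{HS}^2=\sum_{j=1}^s\sigma_j^2$.

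Second, to identify this quantity with $\sum_{j=1}^n\|u_j\|_X^2$, I would use the fact — standard for Hilbert-Schmidt operators, and implicit in Definition \ref{HSd} since the value $\sum_j\|Bx_j\|_Y^2$ does not depend on the chosen total orthonormal sequence — that $\|\mathcal{U}\|_\mathrm{HS}^2=\sum_{j=1}^n\|\mathcal{U}e_j\|_X^2$ for any orthonormal basis $\{e_j\}_{j=1}^n$ of $\mathbb{R}^n$. Taking $\{e_j\}_{j=1}^n$ to be the canonical basis of $\mathbb{R}^n$ with its standard inner product $(\cdot,\cdot)_{\mathbb{R}^n}$, definition (\ref{svdoperator}) gives $\mathcal{U}e_j=u_j$ directly, so $\|\mathcal{U}\|_\mathrm{HS}^2=\sum_{j=1}^n\|u_j\|_X^2$, which is finite. (Equivalently, one can bypass Lemma \ref{SVDC} and simply take this canonical-basis computation, giving a finite value, as the verification that $\mathcal{U}$ is Hilbert-Schmidt.)

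There is essentially no hard step here; the only point deserving a line of justification is the basis-independence of $\sum_j\|\mathcal{U}e_j\|_X^2$, equivalently the identity $\sum_j\|\mathcal{U}e_j\|_X^2=\sum_k\sigma_k^2$. This follows by writing $\|\mathcal{U}e_j\|_X^2=(\mathcal{U}^*\mathcal{U}e_j,e_j)_{\mathbb{R}^n}$, summing over $j$ to obtain the trace of the positive operator $\mathcal{U}^*\mathcal{U}$, and evaluating that trace by diagonalizing $\mathcal{U}^*\mathcal{U}$, whose eigenvalues are the $\sigma_k^2$. It is also worth noting at the outset that, because $\mathbb{R}^n$ is finite dimensional, the "total orthonormal sequences" of Definition \ref{HSd} are precisely the orthonormal bases of $\mathbb{R}^n$ and all sums in play are finite, so no convergence subtleties arise.
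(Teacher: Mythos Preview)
Your proposal is correct and follows essentially the approach the paper indicates: the paper does not give a full proof but simply notes that since $\mathcal{U}$ is finite rank (hence Hilbert-Schmidt and compact), the lemma can be verified directly from Definition~\ref{HSd}, which is precisely your canonical-basis computation $\mathcal{U}e_j=u_j$. Your parenthetical remark that one can bypass Lemma~\ref{SVDC} entirely is in fact the most direct reading of the paper's hint.
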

	\vspace{1mm}
	%For more detail and understanding of Hilbert-Schmidt operator, reader can refer \cite{gohberG1990,MR751959,an2022functional}. 

	Since Range($\mathcal{U}$) belongs to a finite dimension Hilbert space $X=\mathbb{R}^m_M$ with prescribed basis, the operator $\mathcal{U}$ can be represented by the matrix $ U: \mathbb{R}^n\mapsto \mathbb{R}^m_M $ given by $U^{}=\begin{bmatrix}
		\vec{u}_1&\vec{u}_2&\cdots&\vec{u}_n
	\end{bmatrix}$. In this sense, we denote $\|U\|_\mathrm{HS}=\|\mathcal{U}\|_\mathrm{HS}$. The Hilbert adjoint operator of $U: \mathbb{R}^n\mapsto \mathbb{R}^m_M$ is $U^{*}: \mathbb{R}^m_M\mapsto \mathbb{R}^n$ given by $U^{*}=U^TM$, which is verified via $(U\vec{x},\vec{y})_{\mathbb{R}^m_M}=\vec{y}^TMU\vec{x}=(U^TM\vec{y})^T\vec{x}=(\vec{x},U^{*}\vec{y})_{\mathbb{R}^n}$, $\forall \vec{x}\in \mathbb{R}^m_M$, $\vec{y}\in \mathbb{R}^n$. Thus, for different Hilbert space $X$ or weight matrix $M$, the SVD of $U$ will be different. In general, we define the SVD of $U:  \mathbb{R}^n\mapsto \mathbb{R}^m_M$ as follows:
	\vspace{1mm}
	%We formally read that:
	%We formally define the weighted SVD of matrix $C$ as follows. 
	\begin{definition}\label{WSVD}
		The core $M$-weighted SVD of a matrix $U_{m\times n}:\mathbb{R}^n\mapsto \mathbb{R}_M^m$ is a decomposition $U=V\Sigma W^T$, where $V\in \mathbb{R}^{m\times s}$, $\Sigma\in \mathbb{R}^{s\times s}$, and $W\in \mathbb{R}^{n\times s}$ satisfy
		\begin{align}\label{ws}
			V^TMV=I,~~\Sigma=\mathrm{diag}(\sigma_1,\sigma_2,\cdots, \sigma_s),~~W^TW=I.
		\end{align}
		Here, $\sigma_1\geq\sigma_2\geq\cdots\geq\sigma_s>0$ are positive singular values of $U$, and the column of $V$ and $W$ are the left and right singular vectors of $U$.  
	\end{definition}
	
	In Definition \ref{WSVD}, we follow \cite[Definition 2.1]{MR3775096} and use the terminology ``core'' to exclude the zero singular values since POD does not need information from zero singular values for data compression. We refer to the SVD of $U: \mathbb{R}^n\mapsto \mathbb{R}^m$ ($M=I$) as the core standard SVD and the SVD of $U: \mathbb{R}^n\mapsto \mathbb{R}^m_M$ ($M\neq I$)  as the core $M$-weighted SVD, in order to distinguish the SVD of $U$ with standard and weighted Euclidean range spaces. Also, we refer to the matrix $W$ in (\ref{ws}) satisfying $W^TW=I$ as a standard orthonormal matrix and the matrix $V$ satisfying $V^TMV=I$ as an $M$-orthonormal matrix, respectively.  
	\vspace{1mm}
	
	% Note that $U=V\Sigma W^T$ in Definition \ref{WSVD} is a reduced SVD, not including zero singular value. 
	Based on SVD and equations (\ref{PODproblem})-(\ref{H2}), 
	what POD attempts to obtain is:
	\begin{align}
		&	U_{m\times n}\approx V_{m\times r}\Sigma_{r\times r}(W_{n\times r})^T ~~r<s ~\text{with}~r\ll n,~\text{and}\\
		&	V^TMV=I,~\Sigma_r=\mathrm{diag}(\sigma_1,\sigma_2,\cdots, \sigma_r),~W^TW=I.
	\end{align}
	Compared to (\ref{ws}), which removes the zero singular values, the nonzero singular values $\sigma_{r+1},\cdots, \sigma_s$ and the corresponding singular vectors may also be truncated because of their small magnitudes. Consequently, all matrices $V_{m\times r},~ \Sigma_{r\times r},$ and $W_{r\times n}$ are easier to be stored due to their smaller size. This is why POD can be used to optimize data storage and reduce memory requirements.

	\subsection{Incremental proper orthogonal decomposition}\label{incremental}
	The iPOD uses an online projection and QR decomposition to process data one by one and compute the POD (or SVD).  
	The main strength of iPOD is that it does not require pre-storing the entire data matrix, which was proposed by Brand for data in standard Euclidean space $\mathbb{R}^m$ \cite{brand2002} and then extended to data in a general finite-dimensional Hilbert space $\mathbb{R}^m_M$ to deal with functional data \cite{MR3775096,MR4017489,MR3986356}. We here recall the iPOD for data in general finite-dimensional Hilbert space, which primarily summarizes development in literature \cite{brand2002,MR3775096,MR4017489,MR3986356}. Especially, we consider a block-wise data processing \cite{Yw2022} to reduce the loss of orthogonality issue \cite{brand2002,MR3775096,MR4017489,MR3986356} that often happens to the POD basis due to accumulated round-off error from many matrix multiplications. 
	
	%The classical iPOD Algorithms \cite{brand2002,MR3775096,MR4017489,MR3986356} often suffers from loss of orthogonality of the POD basis due to round off error from constantly matrix multiplications, which leads to an inaccurate projection and POD basis update. Thus reorthogonalization has to be applied and is the most computationally expensive step in iPOD. In the following, we summarize from two techniques: matrix separation \cite{BM06} and block-wise data processing \cite{Yw2022}, to tackle the orthogonality issue and obtain accurate POD more effectively. 
	
	To start, we provide a few matrix notations: For a matrix $M$, let  $M_{(:,c:d)}$ denote the submatrix with the columns $c,c+1,\cdots,d$ of $M$, let  $M_{(c:d,:)}$ denote the submatrix with the rows $c,c+1,\cdots,d$ of $M$, let $M_{(a:b,c:d)}$ denote a submatrix that has elements belonging to both rows $a,a+1,\cdots,b$ and columns $c,c+1,\cdots,d$ of $M$.
	
	Let $U$ be a data matrix and $U= V_{m\times l}\Sigma_{l\times l} W_{j\times l}^T$ be the core $M$-weighted SVD and $\vec{u}^{j+1}$ be a new column data available to be processed. We first project $\vec{u}^{j+1}$  onto $V$:
	%\begin{align}\label{proc}
	%VV^*u=V_eV_Q(V_eV_Q)^*u=V_eV_Q(V_eV_Q)^TMu=V_eV_e^TMu=V_eV_e^*u.
	%\end{align}
	%Continuing on (\ref{proc}) we generate a new orthonormal basis $e$ by \cite{BM06}         
	\begin{align}\label{start}
		&\underbrace{VV^*\vec{u}^{j+1}}_{\text{projecting $\vec{u}^{j+1}$ onto $V$}}\Longrightarrow \underbrace{\tilde{e}=\vec{u}^{j+1}-VV^*\vec{u}^{j+1}}_{\text{new orthogonal vector}}\Longrightarrow \underbrace{e=\frac{\tilde{e}}{\|\vec{u}^{j+1}-VV^*\vec{u}^{j+1}\|_{\mathbb{R}_M^m}}}_{\text{normalization}}.
	\end{align}
	Let $p=\|\vec{u}^{j+1}-VV^*\vec{u}^{j+1}\|_{\mathbb{R}_M^m}$. We then decompose the new matrix $\begin{bmatrix}U& \vec{u}^{j+1}\end{bmatrix}$ as:
	\begin{equation}\label{CD}
		\begin{split}
			\begin{bmatrix}U& \vec{u}^{j+1}\end{bmatrix}&= \begin{bmatrix}V\Sigma W^T& \vec{u}^{j+1}\end{bmatrix}=\underbrace{\begin{bmatrix}V&  e\end{bmatrix}  \begin{bmatrix}
					\Sigma W^T& V^*\vec{u}^{j+1}\\0& p
			\end{bmatrix}}_{\text{QR decomposition}}\\
			&=\begin{bmatrix}V&  e\end{bmatrix}  \begin{bmatrix}
				\Sigma& V^*\vec{u}^{j+1}\\0& p
			\end{bmatrix}\begin{bmatrix}
				W^T & 0\\0& 1
			\end{bmatrix}.
		\end{split}
	\end{equation}
	
	Let $\texttt{tol}_p$ be a user-defined threshold that defines an acceptable amount of approximate linear dependence between $\vec{u}^{j+1}$ and $V$. 
	
	{\bf p-truncation:} If $p=\|\vec{u}^{j+1}-VV^*\vec{u}^{j+1}\|_{\mathbb{R}_M^m}< \texttt{tol}_p$, we approximately consider that the new data $\vec{u}^{j+1}$ is linearly dependent on $V$, i.e., $p\approx 0$ or $\vec{u}^{j+1}\approx VV^*\vec{u}^{j+1}$. Assume the consecutive available data $\vec{u}^{j+1}, \vec{u}^{j+2},\cdots,\vec{u}^{j+d}$ are all approximately linearly dependent on $V$. Based on the decomposition in (\ref{CD}), all $e$ and $p$ generated by $\vec{u}^{j+1}, \vec{u}^{j+2},\cdots,\vec{u}^{j+d}$ are zero.  We then have \cite{Yw2022}:
	\begin{align}\label{ptt}
		\begin{bmatrix}U& \vec{u}^{j+1}&\cdots&\vec{u}^{j+d}\end{bmatrix}%&\approx \begin{bmatrix}V\Sigma W^T& VV^*\vec{u}^{j+1}&VV^*\vec{u}^{j+2}&\cdots&VV^*\vec{u}^{j+d}\end{bmatrix}\\
		&\approx V \begin{bmatrix}
			\Sigma& V^*\vec{u}^{j+1}& \cdots& V^*\vec{u}^{j+d}
		\end{bmatrix}\begin{bmatrix}
			W^T & 0\\0& I_d
		\end{bmatrix}.
	\end{align}
	Let $Q=\begin{bmatrix}
		\Sigma& V^*\vec{u}^{j+1}& V^*\vec{u}^{j+2}&\cdots& V^*\vec{u}^{j+d}
	\end{bmatrix}$ and $Q={V}_Q{\Sigma}_Q {W}^T_Q$ be the core standard SVD. We then have
	\begin{equation}\label{2dnew}
		\begin{split}
			\begin{bmatrix}U& \vec{u}^{j+1}&\cdots&\vec{u}^{j+d}\end{bmatrix}&\approx V {V}_Q{\Sigma}_Q {W}^T_Q\begin{bmatrix}
				W^T & 0\\0& I_d \end{bmatrix}\\&=\left(V {V}_Q\right){\Sigma}_Q \left(\begin{bmatrix}
				W & 0\\0& I_d \end{bmatrix}{W}_Q\right)^T.
		\end{split}
	\end{equation}
	It is not difficult to verify that $V {V}_Q$ is $M$-orthonormal, ${\Sigma}_Q $ is ordered diagonal, and $\begin{bmatrix}
		W & 0\\0& I_d \end{bmatrix}{W}_Q$ is standard orthonormal, which tells us that $\begin{bmatrix}U& \vec{u}^{j+1}&\cdots&\vec{u}^{j+d}\end{bmatrix}\approx \left(V {V}_Q\right){\Sigma}_Q \left(\begin{bmatrix}
		W & 0\\0& I_d \end{bmatrix}{W}_Q\right)^T$ is the approximated core $M$-weighted SVD. Also, note that $\begin{bmatrix}
		W & 0\\0& I_d \end{bmatrix}{W}_Q=\begin{bmatrix}
		W & 0\\0& I_d \end{bmatrix}\begin{bmatrix}{W}_{Q(1:l,1:l)}\\{W}_{Q(l+1:l+d,1:l)}\end{bmatrix}=\begin{bmatrix}
		W {W}_{Q(1:l,:)}\\{W}_{Q(l+1:l+d,:)} \end{bmatrix}$. Therefore, we update the POD as:
	\begin{align}\label{PODupdate3}
		V\longleftarrow V{V}_Q,\quad\Sigma\longleftarrow{\Sigma}_Q,\quad W\longleftarrow \begin{bmatrix}
			W {W}_{Q(1:l,:)}\\{W}_{Q(l+1:l+d,:)} \end{bmatrix}.
	\end{align}

	{\bf SV-truncation:} If $p=\|\vec{u}^{j+1}-VV^*\vec{u}^{j+1}\|_{\mathbb{R}_M^m}\geq \texttt{tol}_p$, the new data $\vec{u}^{j+1}$ is linearly independent on $V$. Continuing on (\ref{CD}), we let $\widetilde{Q}=\begin{bmatrix}
		\Sigma& V^*\vec{u}^{j+1}\\0& p
	\end{bmatrix}$ and $\widetilde{Q}=\widetilde{V}_Q\widetilde{\Sigma}_Q \widetilde{W}^T_Q$ be the core standard SVD. Then the matrix $\begin{bmatrix}U& \vec{u}^{j+1}\end{bmatrix}$ is decomposed as:
	\begin{equation}\label{2dnew1}
		\begin{split}
			\begin{bmatrix}U& \vec{u}^{j+1}\end{bmatrix}&=\begin{bmatrix}
				V& e
			\end{bmatrix}\widetilde{V}_Q\widetilde{\Sigma}_Q\widetilde{W}^T_Q\begin{bmatrix}
				W^T & 0\\0& 1
			\end{bmatrix}\\&=\left(\begin{bmatrix}
				V& e
			\end{bmatrix}\widetilde{V}_Q\right)\widetilde{\Sigma}_Q \left(\begin{bmatrix}
				W & 0\\0& 1
			\end{bmatrix}\widetilde{W}_Q\right)^T.
		\end{split}
	\end{equation}
	Again, one can easily check that $\begin{bmatrix}
		V& e
	\end{bmatrix}\widetilde{V}_Q$ is $M$-orthonormal, $\widetilde{\Sigma}_Q$ is ordered diagonal, and $\begin{bmatrix}
		W & 0\\0& 1
	\end{bmatrix}\widetilde{W}_Q$ is standard orthonormal.
	
	Let $\Sigma=\texttt{diag}(\lambda_1,\lambda_2,\cdots,\lambda_l)$ and $\widetilde{\Sigma}_Q=\texttt{diag}(\mu_1,\mu_2,\cdots,\mu_{l+1})$. According to \cite[Lemma 3]{Yw2022}, we have the following relationship among $\{\lambda_i\}$, $\{\mu_i\}$ and $p$:
	\begin{align}\label{lm}
		\mu_{l+1}\leq p,~~
		\mu_{l+1}\leq \lambda_l\leq \mu_{l}\leq \lambda_{l-1}\cdots\mu_2\leq\lambda_1.
	\end{align}
	The inequalities in (\ref{lm}) inform us that the singular value $\mu_{l+1}$ may be small and negligible. Thus, we may truncate  $\mu_{l+1}$ and the corresponding singular vectors. Let $\texttt{tol}_{sv}$ be a user-defined threshold of the singular value  truncation (SV-truncation). If $\mu_{l+1}< \texttt{tol}_{sv}$, we apply SV-truncation and have the POD update:
	\begin{align}\label{PODupdate2}
		V\longleftarrow \begin{bmatrix}V&e\end{bmatrix}\widetilde{V}_{Q(:,1:l)},\quad\Sigma\longleftarrow\widetilde{\Sigma}_{Q(1:l,1:l)},\quad W\longleftarrow \begin{bmatrix}
			W & 0\\0& 1
		\end{bmatrix}\widetilde{W}_{Q(:,1:l)}.
	\end{align}

	{\bf Exact update:} If $p=\|\vec{u}^{j+1}-VV^*\vec{u}^{j+1}\|_{\mathbb{R}_M^m}\geq \texttt{tol}_p$ and $\mu_{l+1}\geq \texttt{tol}_{sv}$, then no truncation is applied. We thus have the exact POD update: 
	\begin{align}\label{PODupdate}
		V\longleftarrow \begin{bmatrix}
			V& e
		\end{bmatrix}\widetilde{V}_Q,\quad\Sigma\longleftarrow\widetilde{\Sigma}_Q,\quad W\longleftarrow\begin{bmatrix}
			W & 0\\0& 1
		\end{bmatrix}\widetilde{W}_Q.
	\end{align}

	The formulas (\ref{PODupdate3}), (\ref{PODupdate2}), and (\ref{PODupdate}) are the foundation for the iPOD Algorithm. We summarize (\ref{start})-(\ref{PODupdate}) in Algorithm \ref{iPOD}.
	
	\begin{algorithm}
		\caption{iPOD Algorithm}\label{iPOD}
		\begin{algorithmic}[1]
			\STATE{\bf Initialization:} $d=0$, $e_p=0$, $e_{sv}=0$, $V_0=1$, $V=\frac{\vec{u}^1}{\|\vec{u}^{1}\|_{\mathbb{R}_M^m}}$,  $\Sigma=\|\vec{u}^1\|_{\mathbb{R}_M^m}$, and $W=1$;
			
			\STATE		{\bf{Input}:} $\vec{u}^{j+1},M$, $V_{m\times l}$, $\Sigma_{l\times l}$, $W_{j\times l}$, $V_0$, $\texttt{tol}_p$, $\texttt{tol}_{sv}$, $\texttt{tol}_{o}$, $d$, $e_p$, $e_{sv}$;~~%\% $\texttt{tol}_{o}$ is an orthogonality tolerance.
			\vspace{0.4mm}
			%		\For{j=1:n}
			\STATE Compute $\vec{b}=V^TM\vec{u}^{j+1},~\tilde e=\vec{u}_{j+1}-V\vec{b}, ~p = \|\tilde e\|_{\mathbb{R}_M^m}$; 
			\vspace{0.4em}
			\IF{$p< \texttt{tol}_p$ $\&$ $ j<n$}
			\STATE $d=d+1$;
			\STATE $B_{(:,d)}= \vec{b}$;
			\STATE $e_p=e_p+p$;\quad \% Data compression error from $p$-truncation
			\ELSE
			\IF{$d> 0$} 
			
			\STATE Compute the core standard SVD of $ Q $: $\begin{bmatrix}
				\Sigma&B
			\end{bmatrix} ={V}_{Q}{\Sigma}_Q{W}_Q$;
			\vspace{0.4em}
			\STATE $V_0={V}_Q$;
			\STATE Update:  $\Sigma={\Sigma}_Q, W=\begin{bmatrix}
				W {W}_{Q(1:l,:)}\\{W}_{Q(l+1:l+d,:)} \end{bmatrix}$;%~~\% Leave the update of $V$ with no $p$-truncation data
			\vspace{0.4em}
			\STATE Update: $\vec{b}={V}_Q^T\vec{b}$; %\quad \% Update h with the latest V for the data without $p$-truncation
			\ENDIF
			\STATE Set $e=\frac{\tilde e}{p};$ 
			\WHILE{$V(:,1)^TMe>\texttt{tol}_o$} 
			\STATE $e=e-VV^TMe$;
			\STATE $e=\frac{e}{\| e\|_{\mathbb{R}_M^m}}$;
			\ENDWHILE
			\STATE Set $V=\begin{bmatrix}V&e\end{bmatrix}, \widetilde{Q}=\begin{bmatrix}
				\Sigma&\vec{b}\\0&p
			\end{bmatrix}$;
			\STATE Compute the core standard SVD  of $\widetilde{Q}:~ =\widetilde{V}_{Q}\widetilde{\Sigma}_Q\widetilde{W}_Q$;
			\vspace{0.4em}
			
			% \State $[a,b]=size(V_Q)$;
			\STATE    Set   $V_1=\begin{bmatrix}
				V_0&0\\	0& 1			\end{bmatrix}$;
			%   \vspace{0.4em}
			%  \State $l$=\texttt{length}($\widetilde{\Sigma}_Q$);
			\IF{$\widetilde{\Sigma}_{Q(l+1,l+1)}>\texttt{tol}_{sv}$}
			\STATE Update: $V=\begin{bmatrix}
				V&e				\end{bmatrix}V_1\widetilde V_{Q}, \Sigma=\widetilde{\Sigma}_Q, W=\begin{bmatrix}
				W & 0\\0& 1
			\end{bmatrix}\widetilde{W}_Q,V_0=I_{l+1}$;
			%	\vspace{0.4em}
			\ELSE 
			\STATE  {Update: $\widetilde{V}_{Q}=V_1\widetilde{V}_{Q}, V=\begin{bmatrix}
					V&e\end{bmatrix}\widetilde{V}_{Q(:,1:l)},  
				\Sigma=\widetilde{\Sigma}_{Q(1:l,1:l)}$, \\\qquad\qquad $W=\begin{bmatrix}
					W & 0\\0& 1
				\end{bmatrix}\widetilde{W}_{Q(:,1:l)}, V_0=I_{l}$};
			\STATE{ $e_{sv}=e_{sv}+\widetilde{\Sigma}_{Q(l+1,l+1)}$;\quad\% Data compression error from SV-truncation}
			\ENDIF
			\STATE $B=[~ ]$, $d=0$;
			\ENDIF
			%		\EndFor
			
			\STATE		{\bf{Return}:} $V=V, \Sigma=\Sigma, W=W, d, e_p, e_{sv}, V_0$;
		\end{algorithmic}
	\end{algorithm}
	
	\begin{remark}
		In Algorithm \ref{iPOD}, the projection to generate $``e"$ may contain round-off error so that the matrix $\begin{bmatrix}V& e\end{bmatrix}$ is not $M$-orthonormal.  We simply check and fix this issue by projecting multiple times \cite{Lbj2012}; see lines $16-19$ in Algorithm \ref{iPOD}.
		
		In line $7$ and line $27$, we propose an incremental way to calculate the data compression error from $p$-truncation and SV-truncation. We prove in Section \ref{ErrorAnalysis} that this is a sharp estimate for the data compression error. 
		
		Also, note that this block-wise data treatment in (\ref{ptt}) does not cause any storage issue even for large $d$ since the matrix $ \begin{bmatrix}
			\Sigma& V^*\vec{u}^{j+1}& V^*\vec{u}^{j+2}&\cdots& V^*\vec{u}^{j+d}
		\end{bmatrix}$ is normally a wide-short small size matrix. 
	\end{remark}
	
	Algorithm \ref{iPOD} differs from classical iPOD algorithms \cite{brand2002,MR3775096,MR4017489,MR3986356} primarily in lines $4-14$. The classical iPOD Algorithms update the POD basis $V$ by matrix multiplication as every new column data comes in and will encounter loss of orthogonality on $V$ quickly due to accumulated round-off error. Hence, the Gram-Schmidt procedure must be applied often to reorthogonalize $V$, which is the most computationally expensive step in classical iPOD. On the other hand, the block-wise processing of $p$-truncation data from \cite{Yw2022} in Algorithm \ref{iPOD} only involves one-time matrix multiplication to update the $V$ and $W$ for $d$ columns data, which largely reduces the source of round-off error, preserves the orthogonality of $V$, and allows us to obtain accurate POD with less cost. %This is the main advantage of Algorithm \ref{iPOD} outperforming the classical iPOD Algorithms. 

	\section{iPOD data compression for time-dependent PDE-constrained optimization}\label{Inexactgradientmethod}
	Next, we present the iPOD data compression for the optimization problem with time-dependent PDE constraint, by using a data assimilation problem for the detailed demonstration. From now on, let $V$ and $H$ denote separable Hilbert spaces. Let $V\subset H\subset V'$ be a triplet over a domain $\Omega$, where $V'$ is the dual space of $V$. The function spaces $L^2(0,T;X)$ and $L^\infty(0,T;X)$ are given by $L^2(0,T;X) := \{v \,| \, v(t)\in X ~\text{for a.e.\ }t\in (0,T)\text{~and } \int_0^T\|v\|_X^2dt<\infty \}$ and $L^{\infty}(0,T;X) := \{v \, | \, v(t)\in X \text{~and } \|v(t) \|_X<\infty ~\text{for a.e.\  }t\in (0,T)\}$. A time-dependent PDE is generally written as (after suppressing the boundary conditions):
	\begin{equation}\label{Cconstraint}
		\left\{\begin{aligned}
			&\frac{\partial u}{\partial t}+Au+F(u)=f\quad \text{in}~V'\\%L^2(0,T;V'),\\
			&u(\cdot,0)=u_{0}\quad \text{in~} H.
		\end{aligned}\right.
	\end{equation}
	Here, $A: V\mapsto V'$ and $F: V\mapsto V'$ are generic linear and nonlinear operators.
	
	Time-dependent PDE-constrained optimization problems have many applications, and each application may involve different formulations with respect to either the objective function or the constraint. Nevertheless, the use of iPOD will all happen in the same step that compresses the forward solution data to save storage and make the gradient calculation possible. In this paper, we concretely consider a data assimilation problem, which is a data-driven solution discovery problem involving initial identification, as a typical example of the PDE-constrained optimization for presentation. The iPOD data compression and the analysis can certainly be extended to other types of optimization problems with time-dependent PDE constraints, which is interesting future work. 
	
	In this paper, we focus on the iPOD data compression for the following data assimilation problem. Assume the initial condition $u_{0}$ in equation (\ref{Cconstraint}) is unknown. We utilize observation data $\hat{u}\in L^2(0,T;\mathscr{H})$ and the PDE to attempt to identify $u_{0}$ and the corresponding solution $u$ by considering the optimization problem \cite{MR0271512,Manzoni21,Tr2010}
	\begin{eqnarray}\label{Cfunctional}
		\min_{u_{0}\in \mathscr{U}}J_{}(u_{0})=\frac{1}{2}\int_0^T\|\hat u^{}-\mathcal{C}u_{}^{}\|^{2}_{\mathscr{H}}dt+\frac{\gamma}{2}\|u_{0}\|^{2}_{\mathscr{U}}\quad\text{subject to } (\ref{Cconstraint}).
	\end{eqnarray}
	Here, $\mathcal{C}: V\mapsto \mathscr{H}$ is an observation operator, and $\mathscr{H}$ and $\mathscr{U}$ are Hilbert spaces. %Problem (\ref{Cfunctional}) is usually called a data driven solution discovery problem, which will be used as a typical example of the PDE-constrained optimization for presentation in this paper.
	
	At the discrete level, the problem (\ref{Cfunctional}) can be approximated as: 
	\begin{eqnarray}\label{discretecostChapter3}
		\min_{u_{0,h}\in \mathscr{U}_h}J_{}(u_{0,h})=\frac{1}{2}\tau \sum_{j=1}^{n}\|\hat u^{n}-\mathcal{C}^ju_{h}^{j}\|^{2}_{\mathscr{H}}+\frac{\gamma}{2}\|u_{0,h}\|^{2}_{\mathscr{U}}.
	\end{eqnarray}
	subject to 
	\begin{equation}\label{discreteoptChapter3}
		\left\{\begin{aligned}
			&\frac{u^{j+1}_h-u^{j}_h}{\tau}+A u^{j+1}_h+F(u^{j+1}_h)=f^{j+1}\quad \text{in}~V_h',\\
			&u^0_h=u_{0,h}\quad \text{in}~H,
		\end{aligned}\right.
	\end{equation}
	for all $j=0,1,\cdots,n-1$. Here, $\mathcal{C}^j: V\mapsto \mathscr{H}$ is an observation operator, and $\mathscr{U}_h$ is a discrete subspace of $\mathscr{U}$. Note that in (\ref{discreteoptChapter3}) backward Euler is used for the temporal discretization of the PDE constraint (\ref{Cconstraint}). The operators $A$ and $F$ involve the spatial discretization, and we still denoted them by $A$ and $F$. The index $j=0,1,2,3,...,{n-1}$ represents the time moment, $\tau$ is the time step size, and 
	$h$ is the spatial mesh size of the uitilized numerical methods, such as the finite element (FE) methods, finite difference methods, or finite volume methods. In the rest of the paper, we consider FE discretization and the FE space is accordingly denoted by $V_h=\mathrm{span}\{\phi_1, \phi_2, \cdots, \phi_m\}$, where  $\{\phi_i\}$ is a generic FE basis. For $\forall v_h\in V_h$, we have $v_h=\sum_{i=1}^m a_i\phi_i$ and $\vec{v}_h=\begin{bmatrix}a_1&a_2&\cdots&a_m\end{bmatrix}^T $ is the vector representation of $v_h$. 
	
	Again, gradient methods, such as steepest descent, conjugate gradient, and quasi-Newton methods, are common mechanisms to solve the problem (\ref{discretecostChapter3})-(\ref{discreteoptChapter3}), which all require calculating the gradient of (\ref{discretecostChapter3}) at $u_{0,h}^{}$ by $j=0,1,2,\cdots,n-1$ \cite{Max1996}%of the objective function (\ref{discretecostChapter3}), see e.g., (\ref{eqnfor})-(\ref{eqngradient}):
	\begin{equation}
		\label{eqnfor}
		\left\{\begin{aligned}
			&\frac{ {u}_{h}^{j+1}- {u}_{h}^{j}}{\tau}+ A{u}_{h}^{j+1}+F(u^{j+1}_h)={f}^{n+1},\\
			& {u}_{h}^{0}= u_{0,h}^{},
		\end{aligned}\right.
	\end{equation}
	\begin{equation}
		\label{eqnback}
		\left\{\begin{aligned}
			&-\frac{ {u}_{h}^{*j+1}-{u}_{h}^{*j}}{\tau}+A^*{u}_{h}^{*j}+\left(F'(u^{j+1}_h)\right)^*{u}_{h}^{*j}=\mathcal{C}^{j+1*}\Lambda(\hat {u}^{j+1}-\mathcal{C}^{j+1}{u}^{j+1}_h),\\
			&{u}_h^{*n}= 0,
		\end{aligned}\right.
	\end{equation}
	\begin{align}\label{gradient} 
		\nabla J({u}^{}_{0,h})=-{u}^{*0}_{h}|_{{\mathscr{U}_h}}+\gamma  {u}^{}_{0,h}, 
	\end{align}
	where $A^*$ and $\mathcal{C}^{j+1*}$ are the adjoint operator of $A$ and $\mathcal{C}^{j+1}$, $\Lambda$ is the canonical isomorphism of $\mathscr{H}$ and its dual space, $\left(F'(u^{j+1}_h)\right)^*{u}_{h}^{*j}$ denotes the adjoint of the derivative of $F$ at $u^{j+1}_h$ acting on ${u}_{h}^{*j}$, and ${u}^{*0}_{h}|_{{\mathscr{U}_h}}$ satisfies $({u}^{*0}_{h},v)_H=({u}^{*0}_{h}|_{{\mathscr{U}}},v_h)_{\mathscr{U}}~~\forall v_h\in {{\mathscr{U}_h}}$.
	
	In the rest of the paper, we use the steepest descent method as a representative of gradient-based methods to illustrate using the iPOD data compression approach. The steepest descent method states that:  
	\begin{align}\label{eqngradient}
		{u}^{(i+1)}_{0,h}={u}^{(i)}_{0,h}-\kappa\nabla J({u}^{(i)}_{0,h}),
	\end{align}
	where $i$ is the $i^{th}$ gradient iteration and $\kappa$ is a positive constant called the descent step size or learning rate. 
	
	%\vspace{2mm}
	We summarize the steepest descent method in Algorithm \ref{SD}.%for solving problem (\ref{Cfunctional})-(\ref{discreteoptChapter3}) 
	%as follows:
	\begin{algorithm}%[H]
		\caption{Steepest Descent Algorithm}\label{SD}
		\begin{algorithmic}
			\STATE		{\bf{Input}:} $u_{0,h}^{(0)}$, and $\mathrm{\texttt{tol}}_{sd}$; \quad \% $\mathrm{\texttt{tol}}_{sd}$ denotes the termination tolerance
			%	\State Initialization: Compute $V_1=\frac{c_1}{\|c_1\|_M}$, $\Sigma_1=\|c_1\|_M$, and $W=1$;
			\STATE	Set $\mathrm{error}=1$ and $i=0$. 
			\WHILE{$\mathrm{error}>\mathrm{\texttt{tol}}_{sd}$}
			\STATE Forward phase: solve equation (\ref{eqnfor}) forward with initial $u_{0,h}^{(i)}$ to obtain $\{u^{j(i)}_h\}_{j=1}^n$;
			\STATE Backward phase: solve equation (\ref{eqnback}) backward with $\{u^{j(i)}_h\}_{j=1}^n$ to obtain $u^{*0(i)}_h$;
			\STATE Update: ${u}^{(i+1)}_{0,h}={u}^{(i)}_{0,h}-\kappa\nabla J({u}^{(i)}_{0,h})$;
			\STATE Set $i=i+1$ and $ \mathrm{error} = \kappa\|\nabla J({u}^{(i)}_{0,h})\|_H$;
			\ENDWHILE
			
			\STATE		{\bf{Return}:} $\{{u}^{j(i)}\}_{j=0}^n$;
		\end{algorithmic}
	\end{algorithm}
	
	In Algorithm \ref{SD}, the storage of the solution data $\{u^{j(i)}_h\}_{j=1}^n$ (or data matrix $\begin{bmatrix}\vec{u}^{1(i)}_h&\vec{u}^{2(i)}_h&\cdots,&\vec{u}^{n(i)}_h\end{bmatrix}$ under a prescribed FE basis) at each iteration is problematic when the forward PDE simulation is large-scale. Hence, we apply iPOD at the forward phase to compress the data $\begin{bmatrix}\vec{u}^{1(i)}_h&\vec{u}^{2(i)}_h&\cdots,&\vec{u}^{n(i)}_h\end{bmatrix}$ to a smaller size,  and then decompress it step by step as needed for solving equation (\ref{eqnback}) to obtain $u^{*0(i)}_h$ and the gradient $\nabla J({u}^{(i)}_{0,h})= {u}^{*0}_{h}|_{{\mathscr{U}_h}}-\gamma {u}^{(i)}_{0,h}$. This is an inexact gradient method because there is a gradient error at each iteration from solving the backward equation with approximated data of $\{{u}^{j(i)}\}_{j=1}^n$.
	
	We present this inexact gradient method in Algorithm \ref{iPODSD}.
	\begin{algorithm}
		\caption{Inexact Gradient Algorithm}\label{iPODSD}
		\begin{algorithmic}
			\STATE 		{\bf{Input}:} $u_{0,h}^{(0)}$, and $\mathrm{\texttt{tol}}_{sd}$;
			%	\State Initialization: Compute $V_1=\frac{c_1}{\|c_1\|_M}$, $\Sigma_1=\|c_1\|_M$, and $W=1$;
			\STATE	Set $ \mathrm{error} =1$ and $i=0$. 
			\WHILE{$ \mathrm{error} >\mathrm{\texttt{tol}}_{sd}$}
			\STATE Forward phase: implement iPOD Algorithm \ref{iPOD} step by step as data $\{\vec{u}^{j(i)}_h\},~ j=1,2,\ldots, n$, becomes available from forward solving (\ref{eqnfor}) with initial $u_{0,h}^{(i)}$ and obtain the compressed data matrix $V\Sigma W^T\approx\begin{bmatrix}\vec{u}^{1(i)}_h&\vec{u}^{2(i)}_h&\cdots,&\vec{u}^{n(i)}_h\end{bmatrix} $;
			\STATE Backward phase: use $V, \Sigma,$ and $W$ to reconstruct $\{{u}^{j(i)}_h\},~ j=n,n-1,\cdots, 1$ step by step to solve the backward equation (\ref{eqnback}) and obtain $u^{*0(i)}_h$;
			\STATE Update: ${u}^{(i+1)}_{0,h}={u}^{(i)}_{0,h}-\kappa\nabla J({u}^{(i)}_{0,h})$;
			\STATE Set $i=i+1$ and $ \mathrm{error} = \kappa\|\nabla J({u}^{(i)}_{0,h})\|_H$;
			\ENDWHILE
			
			\STATE		{\bf{Return}:} $\{{u}^{j(i)}\}_{j=0}^n$;
		\end{algorithmic}
	\end{algorithm}
	
	\begin{remark}
		Note that, in the Forward phase of Algorithm \ref{iPODSD}, we have not provided the weight matrix $M$ yet for iPOD; this will be discussed and provided rigorously through analysis in the following sections. 
		
		In addition, in the Backward phase, to reconstruct ${u}^{j(i)}_h$ from $V, \Sigma,$ and $W$, we first get the vector by $\vec{u}^{j(i)}_h=\frac{1}{\tau}\left(V\Sigma\right) W_{(j,:)}^T$ and then obtain ${u}^{j(i)}_h$ by expanding vector $\vec{u}^{j(i)}_h$ with the given FE basis $\{\phi_i\}_{i=1}^m$.
	\end{remark}
	\vspace{1mm}
	
	This inexact gradient method is memory-friendly since we only need to store small size matrices $V$, $\Sigma$, and $W$ instead of $\{\vec{u}^{j(i)}_h\}_{j=1}^n$. However, the gradient $\nabla J({u}^{(i)}_{0,h})$ at each iteration has error due to the data compression. Hence, before applying Algorithm \ref{iPODSD}, we need to answer a few critical questions: Does the inexact gradient method still converge to the optimal solution? If so, how does the convergence behave? We will resolve these concerns with a complete  analysis in next section.

	\section{Analysis for the inexact gradient method}\label{SDA}
	The analysis consists of three steps: iPOD data compression error estimate ${\bf \longrightarrow}$ gradient error analysis $\longrightarrow$ convergence of the inexact gradient method. Hilbert-Schmidt operators will be vital in estimating the iPOD data compression error. Using a numerical PDE analysis, we analyze and bound the gradient error by the data compression error. Finally, the gradient error bound and a convex optimization analysis allow us to prove convergence of the inexact gradient method. % while the gradient error is appropriately bounded. 
	This analysis framework is generally applicable for any PDE-constrained optimization. The details may differ based on the different PDE problems or gradient methods considered. 
	
	\iffalse
	\textcolor{red}{no new modification for this part so far.}
	\textcolor{blue}{Section 4 NEW NOTE: If we want, I think we can extend the iPOD error analysis to deal with a second norm. Specifically, suppose we are incrementally computing the SVD with respect to the weight $ M $, but we want to know the data compression error using a different weight $ E $. I think this is doable without much work! We will need to compute $ p_E $, which is exactly the formula for $ p $, but with the new weight $ E $. The SVD truncation error with the different norm is easy to find; I have done this in other papers. This would allow us to handle the Navier-Stokes case with iPOD computed with the discrete $ L^2 $ inner product, but of course we need to know the data compression error in the (stronger) discrete $ H^1 $ inner product. We simply need to decide if this is worth the extra effort at this stage; we need to finish this paper! I do think if we add this analysis, then we might be able to submit this paper to SINUM...}
	\fi
	
	\subsection{iPOD data compression error estimate}\label{ErrorAnalysis}
	The authors of \cite{MR4017489} provide an iPOD error estimate between the exact and compressed data with respect to the infinity matrix norm. We obtain a natural improvement using on the Hilbert-Schmidt norm, since the relevant quantities in POD theory are measured by the Hilbert-Schmidt norm, see (\ref{H2}).

	%As known that a finite rank Hilbert-Schmidt operator $S$ can be represented by a matrix $C$. In this sense, we use the notation $\|C\|_\mathrm{HS}=\|S\|_\mathrm{HS}$.

	% of $T:\mathbb{R}^m\to \mathbb{R}^n_M$.
	\begin{lemma}\label{Innorm}
		Let $U^{j}: \mathbb{R}^j\mapsto \mathbb{R}^m_M $ be a given data matrix, $\vec{u}^{j+1}\in \mathbb{R}_M^m$ be a new column data, and $U^{j+1}$ be $\begin{bmatrix}U^{j}\quad \vec{u}^{j+1}\end{bmatrix}:\mathbb{R}^{j+1}\mapsto \mathbb{R}^n_M $. Then we have $\|U^{j+1}\|_\mathrm{HS}^2=\|U^{j}\|_\mathrm{HS}^2+\|\vec{u}^{j+1}\|_{\mathbb{R}^m_M}^2$.
	\end{lemma}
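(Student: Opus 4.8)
The plan is to obtain this identity directly from Lemma \ref{HSnorm}, which already expresses the Hilbert-Schmidt norm of a POD operator in terms of its column data. First I would observe that both $U^{j}$ and $U^{j+1}$ are finite-rank operators of the form (\ref{svdoperator}), so Lemma \ref{HSnorm} applies to each of them: $\|U^{j}\|_\mathrm{HS}^2=\sum_{k=1}^{j}\|\vec{u}^{k}\|_{\mathbb{R}^m_M}^2$ and $\|U^{j+1}\|_\mathrm{HS}^2=\sum_{k=1}^{j+1}\|\vec{u}^{k}\|_{\mathbb{R}^m_M}^2$. Subtracting the first identity from the second isolates the single new term $\|\vec{u}^{j+1}\|_{\mathbb{R}^m_M}^2$, which is exactly the claimed equality.

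Alternatively, and this is the route I would actually write out since it keeps the statement self-contained, I would appeal to Definition \ref{HSd} directly. Choosing the standard orthonormal basis $\{e_k\}_{k=1}^{j+1}$ of the domain $\mathbb{R}^{j+1}$ (orthonormal for the Euclidean inner product, which is the inner product carried by the domain), one has $U^{j+1}e_k=\vec{u}^{k}$ for $1\le k\le j$ and $U^{j+1}e_{j+1}=\vec{u}^{j+1}$, so
\begin{align*}
\|U^{j+1}\|_\mathrm{HS}^2=\sum_{k=1}^{j+1}\|U^{j+1}e_k\|_{\mathbb{R}^m_M}^2=\sum_{k=1}^{j}\|\vec{u}^{k}\|_{\mathbb{R}^m_M}^2+\|\vec{u}^{j+1}\|_{\mathbb{R}^m_M}^2.
\end{align*}
Recognizing the partial sum over $k\le j$ as $\|U^{j}\|_\mathrm{HS}^2$, again by Definition \ref{HSd} applied to $U^{j}$ with the standard basis of $\mathbb{R}^{j}$, yields the result.

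The only point deserving a word of care is that the Hilbert-Schmidt norm does not depend on which total orthonormal sequence on the domain is used to evaluate it; this is part of the standard theory recalled around Definition \ref{HSd} and Lemma \ref{SVDC}, and it is what licenses the convenient choice of the standard basis above. There is no genuine obstacle here: the content is merely the additivity of $\sum_{k}\|\cdot\|^2$ over columns, and it is stated as a separate lemma only because it will be invoked incrementally, one new column at a time, in the iPOD data compression error estimate that follows.
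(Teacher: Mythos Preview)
Your proposal is correct and your first route---applying Lemma \ref{HSnorm} to both $U^{j}$ and $U^{j+1}$ and splitting off the last term of the sum---is exactly the paper's proof. Your alternative via Definition \ref{HSd} with the standard basis is also fine and amounts to reproving Lemma \ref{HSnorm} inline; there is no substantive difference.
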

	\begin{proof}
		Simply using Lemma \ref{HSnorm}, we derive
		\begin{align*}	&\|U^{j+1}\|_\mathrm{HS}^2=\sum_{i=1}^{j+1}\|\vec{u}^{i}\|_{\mathbb{R}^m_M}^2=\sum_{i=1}^{j}\|\vec{u}^{i}\|_{\mathbb{R}^m_M}^2+\|\vec{u}^{j+1}\|_{\mathbb{R}^m_M}^2=\|U^{j}\|_\mathrm{HS}^2+\|\vec{u}^{j+1}\|_{\mathbb{R}^m_M}^2.
		\end{align*}  
	\end{proof}
	
	As presented in Section \ref{incremental} for the iPOD Algorithm, the data compression errors are from two parts: the $p$-truncation and SV-truncation. Based on Lemma \ref{Innorm}, we estimate these two errors with the following lemmas. 
	\vspace{0.5mm}
	
	{\bf p-truncation error:} Let $U^{j}:\mathbb{R}^j\mapsto \mathbb{R}^m_M $ be a given data matrix and $U^j=V^j\Sigma^j\left(W^{j}\right)^T$ be the core M-weighted SVD. Once a new data $\vec{u}^{j+1}$ is available to be processed, if $p^{j+1}=\|\vec{u}^{j+1}-V^jV^{j*}\vec{u}^{j+1}\|_{\mathbb{R}_M^m}$ is smaller than $\texttt{tol}_p$, then iPOD will execute a $p$-truncation, i.e., $\vec{u}^{j+1}\approx V^jV^{j*}\vec{u}^{j+1}$, and update the SVD based on matrix $\begin{bmatrix}
		U^j& V^jV^{j*}\vec{u}^{j+1}
	\end{bmatrix}$ instead of $\begin{bmatrix}
		U^j& \vec{u}^{j+1}
	\end{bmatrix}$. %The p-truncation error is evaluated in the following result. 
	
	%\vspace{1mm}
	\begin{lemma}\label{pt}
		Let $U^{j}:\mathbb{R}^j\mapsto \mathbb{R}^m_M $ and  $U^j=V^j\Sigma^j\left(W^{j}\right)^T$ be the core $M$-weighted SVD. Let  $\vec{u}^{j+1}$ be a new column data to be processed, $p^{j+1}=\|\vec{u}^{j+1}-V^jV^{j*}\vec{u}^{j+1}\|_{\mathbb{R}_M^m}$, and  $\widehat{U}^{j+1}=\begin{bmatrix}
			U^j& V^jV^{j*}\vec{u}^{j+1}
		\end{bmatrix}$.  We have \begin{align}
			\|U^{j+1}-\widehat{U}^{j+1}\|_\mathrm{HS}=p^{j+1}=\|\vec{u}^{j+1}-V^jV^{j*}\vec{u}^{j+1}\|_{\mathbb{R}^m_M}.
		\end{align}
	\end{lemma}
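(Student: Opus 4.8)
The plan is to recognize that the difference $U^{j+1}-\widehat{U}^{j+1}$ is itself a data-matrix operator $\mathbb{R}^{j+1}\mapsto\mathbb{R}^m_M$ whose columns are known explicitly, and then to read off its Hilbert--Schmidt norm directly from Lemma \ref{HSnorm} (equivalently, from the additivity in Lemma \ref{Innorm}).

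First I would write the two matrices column by column. Since the first $j$ columns of $\widehat{U}^{j+1}=\begin{bmatrix}U^j& V^jV^{j*}\vec{u}^{j+1}\end{bmatrix}$ coincide with the columns of $U^j$, which are exactly the first $j$ columns of $U^{j+1}=\begin{bmatrix}U^j& \vec{u}^{j+1}\end{bmatrix}$, subtraction cancels all but the last column and leaves $U^{j+1}-\widehat{U}^{j+1}=\begin{bmatrix}0&\cdots&0& \vec{u}^{j+1}-V^jV^{j*}\vec{u}^{j+1}\end{bmatrix}$ as an operator from $\mathbb{R}^{j+1}$ into $\mathbb{R}^m_M$. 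This operator is of precisely the form of $\mathcal{U}$ in \eqref{svdoperator} with data $\{0,\dots,0,\vec{u}^{j+1}-V^jV^{j*}\vec{u}^{j+1}\}$, so applying Lemma \ref{HSnorm} (or, recursively, Lemma \ref{Innorm} with the zero matrix as the ``old'' block) gives $\|U^{j+1}-\widehat{U}^{j+1}\|_\mathrm{HS}^2=\sum_{i=1}^{j}\|0\|_{\mathbb{R}^m_M}^2+\|\vec{u}^{j+1}-V^jV^{j*}\vec{u}^{j+1}\|_{\mathbb{R}^m_M}^2=(p^{j+1})^2$. Taking square roots yields the claimed identity.

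There is essentially no obstacle here; the only points to keep in mind are bookkeeping ones. One must use the Hilbert--Schmidt norm induced by the \emph{weighted} range space $\mathbb{R}^m_M$, as fixed in Section \ref{Background}, so that the single-column operator in the last step has $\mathrm{HS}$-norm equal to $\|\cdot\|_{\mathbb{R}^m_M}$; and one should note that $V^jV^{j*}$ (with $V^{j*}=(V^j)^TM$) is exactly the $M$-orthogonal projection onto $\mathrm{Range}(V^j)$, so that $\vec{u}^{j+1}-V^jV^{j*}\vec{u}^{j+1}$ is the projection residual and $p^{j+1}$ its $\mathbb{R}^m_M$-norm, matching the definition in the statement.
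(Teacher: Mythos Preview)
Your proposal is correct and follows essentially the same approach as the paper: subtract the two block matrices to obtain $\begin{bmatrix}0&\vec{u}^{j+1}-V^jV^{j*}\vec{u}^{j+1}\end{bmatrix}$, then read off the Hilbert--Schmidt norm via Lemma~\ref{Innorm} (or equivalently Lemma~\ref{HSnorm}) as the $\mathbb{R}^m_M$-norm of the single nonzero column. The paper's proof is just these three lines without the additional commentary on the weighted norm and the projection interpretation.
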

	\begin{proof}
		By basic matrix operations and Lemma \ref{Innorm}, we have
		\begin{align*}
			\|U^{j+1}-\widehat{U}^{j+1}\|_\mathrm{HS}^2&=\|\begin{bmatrix}U^j& \vec{u}^{j+1}\end{bmatrix}-\begin{bmatrix}U^j&V^{j}V^{j*}\vec{u}^{j+1}\end{bmatrix}\|_\mathrm{HS}^2\\
			&=\|\begin{bmatrix}0& \vec{u}^{j+1}-V^{j}V^{j*}\vec{u}^{j+1}\end{bmatrix}\|_\mathrm{HS}^2\\
			&=\|\vec{u}^{j+1}-V^{j}V^{j*}\vec{u}^{j+1}\|_{\mathbb{R}^m_M}^2,
		\end{align*} 
		which completes the proof.
	\end{proof}

	{\bf SV-truncation error:} The second truncation applied to iPOD is the SV-truncation: %the SVD includes negligible 
	If the smallest singular value of a data matrix is less than the given threshold $\texttt{tol}_{sv}$, we truncate it and the corresponding singular vector. The following Lemma estimates this truncation.

	\begin{lemma}\label{last}
		Let $U^{j}:\mathbb{R}^j\mapsto \mathbb{R}^m_M $, $\vec{u}^{j+1}$ be a new column data to be processed, and
		$\widetilde{U}^{j+1}$ be the approximated data matrix of ${U}^{j+1}=\begin{bmatrix}
			U^j& \vec{u}^{j+1}
		\end{bmatrix}$ after applying SV-truncation. If $\sigma_l^{j+1}$ is the smallest singular value of  ${U}^{j+1}$ that is truncated, then $$ \|{U}^{j+1}-\widetilde{U}^{j+1}\|_\mathrm{HS}^2=(\sigma_l^{j+1})^2.$$  
	\end{lemma}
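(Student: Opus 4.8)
The plan is to recognize the matrix $\widetilde{U}^{j+1}$ produced by the SV-truncation update as the best (rank one less) truncated core $M$-weighted SVD of $U^{j+1}$, and then compute the resulting Hilbert--Schmidt error directly, much as in the proof of Lemma \ref{pt}. First I would invoke the decomposition (\ref{2dnew1}): writing $U^j = V\Sigma W^T$ for the core $M$-weighted SVD of $U^j$ and letting $\widetilde V_Q \widetilde\Sigma_Q \widetilde W_Q^T$ be the core standard SVD of the small matrix $\widetilde Q$ appearing there, the augmented matrix $U^{j+1} = [\,U^j\ \ \vec{u}^{j+1}\,]$ has the core $M$-weighted SVD
\begin{align*}
	U^{j+1} = \bar V \bar\Sigma \bar W^T,\qquad \bar V = [\,V\ \ e\,]\widetilde V_Q,\quad \bar\Sigma = \widetilde\Sigma_Q = \mathrm{diag}\big(\sigma_1^{j+1},\dots,\sigma_l^{j+1}\big),\quad \bar W = \begin{bmatrix} W & 0 \\ 0 & 1 \end{bmatrix}\widetilde W_Q,
\end{align*}
where, as already noted just below (\ref{2dnew1}), $\bar V$ is $M$-orthonormal, $\bar\Sigma$ is ordered diagonal, and $\bar W$ is standard orthonormal (here I relabel the rank of $U^{j+1}$ as $l$, consistent with the lemma statement). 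Reading off the update rule (\ref{PODupdate2}), the matrix returned after SV-truncation is then exactly $\widetilde U^{j+1} = \bar V_{(:,1:l-1)}\,\bar\Sigma_{(1:l-1,1:l-1)}\,\bar W_{(:,1:l-1)}^T$, i.e.\ the truncation of the SVD above that discards only the rank-one term carrying the smallest singular value $\sigma_l^{j+1}$.

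Subtracting, only that one term survives: $U^{j+1} - \widetilde U^{j+1} = \sigma_l^{j+1}\,\bar V_{(:,l)}\,\bar W_{(:,l)}^T$. To conclude I would apply Lemma \ref{HSnorm} to this matrix, whose $i$-th column equals $\sigma_l^{j+1}\big(\bar W_{(:,l)}\big)_i\,\bar V_{(:,l)}$:
\begin{align*}
	\|U^{j+1} - \widetilde U^{j+1}\|_\mathrm{HS}^2 = \sum_i \big\| \sigma_l^{j+1}\big(\bar W_{(:,l)}\big)_i\,\bar V_{(:,l)} \big\|_{\mathbb{R}^m_M}^2 = \big(\sigma_l^{j+1}\big)^2 \big(\bar W_{(:,l)}^T \bar W_{(:,l)}\big)\big(\bar V_{(:,l)}^T M \bar V_{(:,l)}\big) = \big(\sigma_l^{j+1}\big)^2,
\end{align*}
since $\bar W_{(:,l)}^T \bar W_{(:,l)} = 1$ and $\bar V_{(:,l)}^T M \bar V_{(:,l)} = 1$ by the standard, resp.\ $M$-weighted, orthonormality of $\bar W$ and $\bar V$. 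Equivalently, once $\widetilde U^{j+1}$ is identified with the truncated SVD of $U^{j+1}$, this is just the optimal Hilbert--Schmidt truncation identity (\ref{H2}) applied with $r = l-1$, where the single discarded singular value is $\sigma_l^{j+1}$.

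The only substantive point, and the step I would take the most care with, is the identification in the first paragraph: verifying that the factors produced by (\ref{PODupdate2}) genuinely assemble the truncated core $M$-weighted SVD of $U^{j+1}$ in the sense of Definition \ref{WSVD} --- in particular that $[\,V\ \ e\,]\widetilde V_Q$ is $M$-orthonormal and $\begin{bmatrix} W & 0 \\ 0 & 1 \end{bmatrix}\widetilde W_Q$ is standard orthonormal, so that the diagonal entries of $\widetilde\Sigma_Q$ are genuinely the singular values $\sigma_i^{j+1}$ of $U^{j+1}$ and truncating the last column of $\bar V$, $\bar W$ and the last diagonal entry of $\bar\Sigma$ is precisely the SV-truncation. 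This is exactly the orthonormality verification already recorded right after (\ref{2dnew1}); with that in hand, the error computation above is immediate and needs no further estimates.
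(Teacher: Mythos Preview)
Your argument is correct and essentially matches the paper, which simply remarks that the result is a direct implication of Lemma~\ref{SVDC}: once $\widetilde U^{j+1}$ is identified with the rank-$(l-1)$ truncated SVD of $U^{j+1}$, the difference has the single nonzero singular value $\sigma_l^{j+1}$, so its Hilbert--Schmidt norm squared equals $(\sigma_l^{j+1})^2$. Your write-up just makes this explicit via Lemma~\ref{HSnorm} (equivalently via (\ref{H2})), and your care in verifying that (\ref{PODupdate2}) really produces the truncated core $M$-weighted SVD is exactly the right point to check.
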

	
	Lemma \ref{last} is a direct implication of Lemma \ref{SVDC}. 
	
	Based on Lemma \ref{pt}, Lemma \ref{last}, and the lemmas given in Section \ref{Background}, we provide an incremental way to estimate the error between the exact and compressed data. 
	\begin{theorem}\label{thm0}
		Let $\widetilde{U}^{j}$ and $\widetilde{U}^{j+1}$ be the iPOD approximated data matrices of  the exact data matrices ${U}^{j}:\mathbb{R}^j\mapsto \mathbb{R}^m_M $ and ${U}^{j+1}=\begin{bmatrix}{U}^{j}&\vec{u}^{j+1}\end{bmatrix}:\mathbb{R}^{j+1}\mapsto \mathbb{R}^m_M $, respectively. Let $\widetilde{U}^{j}=V^j\Sigma^j\left(W^{j}\right)^T$ be the core $M$-weighted SVD. 
		If \begin{align*}
			\|U^{j}-\widetilde{U}^j\|_\mathrm{HS}\leq \epsilon^j~ \text{and~  }p^{j+1}=\|\vec{u}^{j+1}-V^jV^{j*}\vec{u}^{j+1}\|_{\mathbb{R}^m_M},
		\end{align*}
		then 
		\begin{align}
			\|U^{j+1}-\widetilde{U}^{j+1}\|_\mathrm{HS}\leq \epsilon^{j+1},
		\end{align}
		where 
		\begin{align}\label{errorestimate}
			\epsilon^{j+1}& \leq
			\begin{cases}
				\epsilon^{j} & \text{if no truncation is applied},\\
				\epsilon^{j}+p^{j+1}& \text{if { p}-truncation is applied},\\
				\epsilon^{j}+\sigma_{l}^{j+1}& \text{if {SV}-truncation is applied},
			\end{cases}
		\end{align}
		and $\{\sigma_i^{j+1}\}_{i=1}^l$ are the singular values of the matrix $\begin{bmatrix}\widetilde{U}^{j}&\vec{u}^{j+1}\end{bmatrix}$.
	\end{theorem}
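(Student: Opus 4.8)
The plan is to prove the claim by induction on $j$, passing from the bound $\|U^j - \widetilde U^j\|_{\mathrm{HS}} \le \epsilon^j$ to $\|U^{j+1}-\widetilde U^{j+1}\|_{\mathrm{HS}} \le \epsilon^{j+1}$. The key structural observation is that the iPOD update from step $j$ to step $j+1$ factors as a composition of at most two elementary operations: first replacing the freshly appended column $\vec u^{j+1}$ by its projection $V^j V^{j*}\vec u^{j+1}$ (this happens precisely when $p$-truncation is triggered), and then discarding the smallest singular value of the resulting matrix (this happens precisely when SV-truncation is triggered). Crucially, both of these are measured relative to the \emph{already-approximated} base $\widetilde U^j$, not the exact $U^j$; so I must be careful to set up the intermediate matrices with $\widetilde U^j$ in place of $U^j$. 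Concretely, write $\widehat V^{j+1} = \begin{bmatrix}\widetilde U^j & V^j V^{j*}\vec u^{j+1}\end{bmatrix}$ and let $\widetilde U^{j+1}$ be obtained from $\begin{bmatrix}\widetilde U^j & \vec u^{j+1}\end{bmatrix}$ (or from $\widehat V^{j+1}$) by the appropriate SV-truncation.

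The core of the argument is then a triangle-inequality decomposition in the Hilbert–Schmidt norm:
\begin{align*}
\|U^{j+1} - \widetilde U^{j+1}\|_{\mathrm{HS}}
&\le \big\|\begin{bmatrix}U^j & \vec u^{j+1}\end{bmatrix} - \begin{bmatrix}\widetilde U^j & \vec u^{j+1}\end{bmatrix}\big\|_{\mathrm{HS}}
+ \big\|\begin{bmatrix}\widetilde U^j & \vec u^{j+1}\end{bmatrix} - \widetilde U^{j+1}\big\|_{\mathrm{HS}}.
\end{align*}
The first term on the right equals $\|U^j - \widetilde U^j\|_{\mathrm{HS}} \le \epsilon^j$ by Lemma~\ref{Innorm} (the appended columns cancel, leaving only the first $j$ columns), which is where the induction hypothesis enters. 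The second term is the \emph{new} compression error incurred at this step, and it is bounded by a further application of the triangle inequality together with Lemmas~\ref{pt} and~\ref{last}: if no truncation occurs it is $0$; if only $p$-truncation occurs it equals $p^{j+1}$ by Lemma~\ref{pt} (applied with $\widetilde U^j$ in the role of $U^j$ — note $p^{j+1}$ is computed against $V^j$, which is the left factor of $\widetilde U^j$, so the lemma applies verbatim); if SV-truncation also occurs we split $\begin{bmatrix}\widetilde U^j & \vec u^{j+1}\end{bmatrix} \to \widehat V^{j+1} \to \widetilde U^{j+1}$ and bound by $p^{j+1} + \sigma_l^{j+1}$, where $\sigma_l^{j+1}$ is the smallest singular value of $\begin{bmatrix}\widetilde U^j & \vec u^{j+1}\end{bmatrix}$ that gets discarded, invoking Lemma~\ref{last} (itself a consequence of Lemma~\ref{SVDC}). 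Summing, we recover exactly the three cases of \eqref{errorestimate}, with $\epsilon^{j+1} = \epsilon^j$, $\epsilon^j + p^{j+1}$, or $\epsilon^j + \sigma_l^{j+1}$ respectively (in the combined $p$- and SV-truncation case the bound $\epsilon^j + \sigma_l^{j+1}$ is the one reported, since the algorithm's incremental error accumulator adds $\sigma_l^{j+1}$ in that branch).

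The main subtlety — and the step I would treat most carefully — is the bookkeeping of \emph{which} matrix plays the role of the "exact" input to each elementary operation. Lemmas~\ref{pt} and~\ref{last} are stated with a generic data matrix $U^j$ and its exact core $M$-weighted SVD, but in the inductive step we apply them to $\widetilde U^j = V^j\Sigma^j (W^j)^T$, which is genuinely a core $M$-weighted SVD (just of the approximate data), so the hypotheses are met — this is the point to state explicitly. A second point worth verifying is that $\sigma_l^{j+1}$, the truncated singular value, is the same whether we compute the SVD of $\begin{bmatrix}\widetilde U^j & \vec u^{j+1}\end{bmatrix}$ directly or via the intermediate $\widehat V^{j+1}$; in the $p$-truncation-then-SV-truncation branch the stated statement identifies $\{\sigma_i^{j+1}\}_{i=1}^l$ as the singular values of $\begin{bmatrix}\widetilde U^j & \vec u^{j+1}\end{bmatrix}$, so I would use that matrix as the reference and let the triangle inequality absorb the (at most $p^{j+1}$) discrepancy. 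Beyond this bookkeeping, every estimate is an exact identity from the preceding lemmas, so no hard analysis remains; the work is entirely in assembling the case split cleanly.
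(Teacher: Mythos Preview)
Your approach is essentially the paper's: the same triangle-inequality split
\[
\|U^{j+1}-\widetilde U^{j+1}\|_{\mathrm{HS}}
\le \|[U^j\ \vec u^{j+1}]-[\widetilde U^j\ \vec u^{j+1}]\|_{\mathrm{HS}}
+\|[\widetilde U^j\ \vec u^{j+1}]-\widetilde U^{j+1}\|_{\mathrm{HS}},
\]
with the first term bounded by $\epsilon^j$ via Lemma~\ref{Innorm} and the second handled case-by-case through Lemmas~\ref{pt} and~\ref{last}, applied to $\widetilde U^j$ (whose core $M$-weighted SVD is $V^j\Sigma^j(W^j)^T$).

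The one place you over-complicate is the case analysis. The paper observes, citing inequality~\eqref{lm} and the structure of Algorithm~\ref{iPOD}, that \emph{at most one} of $p$-truncation or SV-truncation is applied per data increment: if $p^{j+1}<\texttt{tol}_p$ the algorithm never forms $\widetilde Q$ or checks $\mu_{l+1}$, while if $p^{j+1}\ge\texttt{tol}_p$ no $p$-truncation occurs. Hence your ``combined $p$- and SV-truncation'' branch, and the associated bookkeeping about whether $\sigma_l^{j+1}$ refers to singular values of $[\widetilde U^j\ \vec u^{j+1}]$ or of $\widehat V^{j+1}$, simply does not arise. With that observation the three cases in~\eqref{errorestimate} are genuinely disjoint and the second term above is exactly $0$, $p^{j+1}$, or $\sigma_l^{j+1}$; no further triangle inequality is needed.
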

	\begin{proof} 
		Using the triangle inequality and Lemma \ref{pt}, we deduce
		\begin{equation}\label{bd1}
			\begin{split}
				&\|{U}^{j+1}-\widetilde{U}^{j+1}\|_\mathrm{HS}=\|U^{j+1}-[\widetilde{U}^{j}\quad \vec{u}^{j+1}]+[\widetilde{U}^{j}\quad \vec{u}^{j+1}]-\widetilde{U}^{j+1}\|_\mathrm{HS}\\
				&\leq \|U^{j+1}-[\widetilde{U}^{j}\quad \vec{u}^{j+1}]\|_\mathrm{HS}+\|[\widetilde{U}^{j}\quad \vec{u}^{j+1}]-\widetilde{U}^{j+1}\|_\mathrm{HS}\\
				&\leq \|[U^{j}-\widetilde{U}^{j} \quad 0]\|_\mathrm{HS}+\|[\widetilde{U}^{j}\quad \vec{u}^{j+1}]-\widetilde{U}^{j+1}\|_\mathrm{HS}\\
				&\leq \|U^{j}-\widetilde{U}^{j}\|_\mathrm{HS}+\|[\widetilde{U}^{j}\quad \vec{u}^{j+1}]-\widetilde{U}^{j+1}\|_\mathrm{HS}.
			\end{split}
		\end{equation}
		According to the inequality (\ref{lm}), each data increment only applies at most one truncation, either p-truncation, SV-truncation, or no truncation. If no truncation is applied, the core $M$-weighted SVD is updated exactly and the first inequality of (\ref{errorestimate}) holds. If the p-truncation is applied,  combining (\ref{bd1}) and Lemma \ref{pt} gives us 
		\begin{align}
			\|{U}^{j+1}-\widetilde{U}^{j+1}\|_\mathrm{HS}\leq \|U^{j}-\widetilde{U}^{j}\|_\mathrm{HS}+\|\vec{u}^{j+1}-V^jV^{j*}\vec{u}^{j+1}\|_{\mathbb{R}^m_M}= e^j+p^{j+1}.
		\end{align}
		If the SV-truncation is applied, combining (\ref{bd1}) and Lemma \ref{last} gives us 
		\begin{align}
			\|{U}^{j+1}-\widetilde{U}^{j+1}\|_\mathrm{HS}\leq \|U^{j}-\widetilde{U}^{j}\|_\mathrm{HS}+\sigma_l^{j+1}\leq e^j+\sigma_l^{j+1}.
		\end{align}
		This completes the proof. 
	\end{proof}
	\begin{remark}
		In addition, for one who is interested in the ratio $r^{j+1}$ between the first $j+1$ column approximated data and the exact data, with Lemma \ref{SVDC} and Lemma \ref{HSnorm}, we can calculate it by
		\begin{align}\label{ratio}
			r^{j+1}=\sqrt{\frac{\|\widetilde{U}^{j+1}\|^2_\mathrm{HS}}{\|{U}^{j+1}\|^2_\mathrm{HS}}}=\sqrt{\frac{\|\widetilde{U}^{j+1}\|^2_\mathrm{HS}}{\sum_{i=1}^{j+1}\|\vec{u}^i\|_{\mathbb{R}^m_M}^2}}=\sqrt{\frac{\sum_{\{i|\sigma_i^{j+1}\geq \texttt{tol}_{sv}\}}(\sigma_i^{j+1})^2}{\sum_{i=1}^{j+1}\|\vec{u}^i\|_{\mathbb{R}^n_M}^2}}.
		\end{align}
		Note that the denominator of the ratio can be computed incrementally so that we do not need to store any data. 
	\end{remark}
	
	Theorem \ref{thm0} provides us a theoretical approach to set the $p$-truncation threshold $\texttt{tol}_p$ and the SV-truncation threshold $\texttt{tol}_{sv}$ in the iPOD Algorithm to control the overall data compression error and achieve the desired data approximation accuracy, which additionally implies that the block size of $p$-truncation data ``d'' and the number of POD basis vectors in Algorithm \ref{iPOD} are not necessarily pre-defined; they are all automatically determined by the data and the thresholds $\texttt{tol}_p$ and $\texttt{tol}_{sv}$.
	
	\subsection{Gradient error analysis}
	Recall that the gradient error arises when the compressed data of $\{u_{h}^n\}_{j=1}^n$ is used to solve the backward adjoint equation (\ref{eqnback}). To estimate the gradient error, we need to investigate how the data compression error evolves in the backward equation (\ref{eqnback}). 
	In the following, we analyze such error evolution for example linear and nonlinear PDE constraints, respectively. 
	
	We provide a few notations: Let $W^{k,p}(\Omega)$ denote the standard Sobolev spaces, let $H^1(\Omega)=W^{1,2}(\Omega)$ and $L^2(\Omega)=W^{0,2}(\Omega)$, and let $H^1_0(\Omega)=\{v\in H^1(\Omega): v=0~\text{on}~\partial \Omega\}$.
	For presentation convenience, we here consider $V=H^1_0(\Omega)$, $V'=H^{-1}(\Omega)$, $H=L^2(\Omega)$, $\mathscr{H}=L^2(\Omega)$,  $\mathscr{U}=L^2(\Omega)$, and the observation operator $\mathcal{C}$ and $\mathcal{C}^j$ to be the identity operator in problem (\ref{discretecostChapter3})-(\ref{discreteoptChapter3}). Then the isomorphism $\Lambda$ in (\ref{eqnback}) is an identity operator, and ${u}^{*0}_{h}|_{{\mathscr{U}_h}}$ in (\ref{gradient}) is simply ${u}^{*0}_{h}$. 
	
	We also recall the Poincar\'e inequality: 
	$
	\|v\|_{L^2(\Omega)}\leq C_P\|v\|_{H^1(\Omega)} $  for all $ v\in H^1(\Omega) $, where $C_P$ is a positive constant depending on $\Omega$.
	
	\subsubsection{Gradient error analysis for a linear PDE constraint}\label{LGD}
	We first consider a linear PDE constraint ($F(u)=0$ in (\ref{discreteoptChapter3})) to illustrate how the iPOD error impacts the gradient. Again, let $V_h\subset H^1(\Omega)$ denote the finite element space for the considered linear PDE.
	
	\begin{theorem}\label{thm1}
		Assume $ F = 0 $ and the operator $A^*$ in equation (\ref{eqnback}) satisfies: $\left\langle A^*v,v\right\rangle+C_1\|v\|_{L^2(\Omega)}^2\geq C_2\|v\|_{H^1(\Omega)}^2$  $\forall v\in H^1(\Omega)$, where $C_1 \geq 0$ and $C_2 > 0$ are constants. Let $\{{u}_{h}^{j}\}$ be solutions of equation (\ref{eqnfor}) with a given inital condition $u_{0,h}\in L^2(\Omega)$ and let $\{\vec{u}_{h}^{j}\}$	be a vector representation of $\{{u}_{h}^{j}\}$.
		%Let $\{{u}_{h}^{j}\}_{j=1}^n$ be solutions of equation (\ref{eqnfor}) with a given initial condition $u_{0,h}\in L^2(\Omega)$,
		Define the POD operator $U=\begin{bmatrix}
			\tau^{\frac{1}{2}}\vec{u}_{h}^{1}&\tau^{\frac{1}{2}}\vec{u}_{h}^{2}&...&\tau^{\frac{1}{2}}\vec{u}_{h}^{n}  
		\end{bmatrix}: \mathbb{R}^{n}\mapsto \mathbb{R}^{m}_M$, where the matrix $M$ is induced by the $L^2(\Omega)$ norm. Let $\widetilde{U}=\begin{bmatrix}
			\tau^{\frac{1}{2}}\vec{u}_{h,r}^{1}&\tau^{\frac{1}{2}}\vec{u}_{h,r}^{2}&...&\tau^{\frac{1}{2}}\vec{u}_{h,r}^{n}  
		\end{bmatrix}$  be the approximated data matrix of $U$ by applying the iPOD with respect to the weight matrix $ M $, and $\epsilon$ be an error satisfying $\|U-\widetilde{U}\|_\mathrm{HS}\leq \epsilon$.  Denote $C_3=\frac{2C_1}{1-2C_1\tau}$ and $T=n\tau$.  If $1-2C_1\tau>0$, then the gradient error $\xi$ is bounded by
		\begin{align}
			\|\xi\|_{L^2(\Omega)}\leq C_P\sqrt{\frac{e^{C_3T}}{C_2}} \, \epsilon.
		\end{align}
	\end{theorem}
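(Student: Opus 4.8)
The plan is to compare the exact adjoint solution $\{u_h^{*j}\}$ solving (\ref{eqnback}) with the perturbed adjoint $\{u_{h,r}^{*j}\}$ obtained by running the same backward recursion but with the reconstructed data $u_{h,r}^{j}$ in place of $u_h^j$. In the present setting ($F=0$, $\Lambda=I$, $\mathcal{C}^{j+1}=I$) the error $e^{*j}:=u_h^{*j}-u_{h,r}^{*j}$ satisfies the linear recursion
\begin{align*}
-\frac{e^{*j+1}-e^{*j}}{\tau}+A^*e^{*j}=-(u_h^{j+1}-u_{h,r}^{j+1})=:-\delta^{j+1},\qquad e^{*n}=0,
\end{align*}
and since $\nabla J(u_{0,h})=-u_h^{*0}+\gamma u_{0,h}$ here, the gradient error is $\xi=\pm e^{*0}$, so it suffices to bound $\|e^{*0}\|_{L^2(\Omega)}$ in terms of the data compression error $\{\delta^j\}$.

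Second, I would derive a one-step backward energy inequality. Testing the recursion against $e^{*j}$ in $L^2(\Omega)$ and using the polarization identity $(a-b,b)=\tfrac12(\|a\|^2-\|b\|^2-\|a-b\|^2)$ gives
\begin{align*}
\|e^{*j}\|_{L^2}^2-\|e^{*j+1}\|_{L^2}^2+\|e^{*j+1}-e^{*j}\|_{L^2}^2+2\tau\langle A^*e^{*j},e^{*j}\rangle=-2\tau(\delta^{j+1},e^{*j})_{L^2}.
\end{align*}
I then apply the coercivity hypothesis $\langle A^*v,v\rangle\geq C_2\|v\|_{H^1}^2-C_1\|v\|_{L^2}^2$, bound the right side by Cauchy--Schwarz, the Poincar\'e inequality $\|e^{*j}\|_{L^2}\leq C_P\|e^{*j}\|_{H^1}$, and Young's inequality with a weight chosen so that exactly half of the $2\tau C_2\|e^{*j}\|_{H^1}^2$ term is absorbed; dropping the nonnegative terms $\|e^{*j+1}-e^{*j}\|_{L^2}^2$ and the leftover $\tau C_2\|e^{*j}\|_{H^1}^2$ yields
\begin{align*}
(1-2C_1\tau)\|e^{*j}\|_{L^2}^2\leq \|e^{*j+1}\|_{L^2}^2+\frac{\tau C_P^2}{C_2}\|\delta^{j+1}\|_{L^2}^2.
\end{align*}
Since $1-2C_1\tau>0$ this rearranges to $\|e^{*j}\|_{L^2}^2\leq(1+C_3\tau)\big(\|e^{*j+1}\|_{L^2}^2+\tfrac{\tau C_P^2}{C_2}\|\delta^{j+1}\|_{L^2}^2\big)$ with $C_3=2C_1/(1-2C_1\tau)$ as defined.

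Third, I would iterate this inequality backward from $j=n$ (where $e^{*n}=0$) down to $j=0$, a discrete Gr\"onwall argument, to get $\|e^{*0}\|_{L^2}^2\leq\tfrac{C_P^2}{C_2}\sum_{j=1}^n(1+C_3\tau)^{n-j+1}\tau\|\delta^j\|_{L^2}^2\leq\tfrac{C_P^2}{C_2}(1+C_3\tau)^n\sum_{j=1}^n\tau\|\delta^j\|_{L^2}^2$, then use $(1+C_3\tau)^n\leq e^{C_3 n\tau}=e^{C_3T}$. Finally, because $M$ is induced by the $L^2(\Omega)$ norm, $\sum_{j=1}^n\tau\|\delta^j\|_{L^2}^2=\sum_{j=1}^n\|\tau^{1/2}\vec{u}_h^j-\tau^{1/2}\vec{u}_{h,r}^j\|_{\mathbb{R}^m_M}^2$, and by Lemma \ref{HSnorm} this equals $\|U-\widetilde{U}\|_\mathrm{HS}^2\leq\epsilon^2$. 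Combining gives $\|\xi\|_{L^2(\Omega)}=\|e^{*0}\|_{L^2(\Omega)}\leq C_P\sqrt{e^{C_3T}/C_2}\,\epsilon$.

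The main obstacle is the energy estimate: one must track the signs carefully through the backward-in-time implicit scheme and choose the Young's inequality split so that the coercivity constant $C_2$ exactly absorbs the cross term while producing the clean factor $C_P^2/C_2$ and no worse dependence; everything after that — the discrete Gr\"onwall iteration and the identification of the accumulated residual with $\|U-\widetilde U\|_\mathrm{HS}^2$ via Lemma \ref{HSnorm} — is routine.
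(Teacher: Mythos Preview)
Your proposal is correct and follows essentially the same approach as the paper's proof: form the error equation for $e_h^{*j}$, test it against $e_h^{*j}$, invoke the coercivity assumption together with Cauchy--Schwarz, Poincar\'e, and Young to produce a one-step energy inequality, then pass to the global bound via a discrete Gronwall argument and identify $\sum_j \tau\|\delta^j\|_{L^2}^2$ with $\|U-\widetilde U\|_{\mathrm{HS}}^2$. The only cosmetic difference is that the paper sums the one-step inequalities first and then applies the discrete Gronwall lemma, whereas you iterate the recursion directly and bound $(1+C_3\tau)^n\le e^{C_3T}$; these are equivalent packagings of the same estimate.
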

	
	\begin{proof}
		This is a proof similar to a standard PDE stability analysis with respect to the source term; see Appendix \ref{ThmHeat} for details. 
	\end{proof}
	\begin{remark}
		For parabolic and hyperbolic PDEs, it is typical to have $\left\langle Av,v\right\rangle+C_1\|v\|_{L^2(\Omega)}^2\geq C_2\|v\|_{H^1(\Omega)}^2$  $\forall v\in H^1(\Omega)$, for well-posedness. This property is preserved for $A^*$ because of the linearity of $A$, i.e., $\left\langle A^*v,v\right\rangle+C_1\|v\|_{L^2(\Omega)}^2\geq C_2\|v\|_{H^1(\Omega)}^2$  $\forall v\in H^1(\Omega)$.
	\end{remark}
	\begin{remark}
		Since the weight matrix $M$ in Theorem \ref{thm1} is induced by $L^2(\Omega)$ at the discrete level, $M$ is actually the mass matrix in the context of the finite element method. That is, the $(i,j)$ entry of $M$ is actually $\int_{\Omega }\phi_j\phi_i dxdy, ~1\leq i,j\leq m$, where the $\{\phi_i\}_{i=1}^m$ are the finite element basis functions. 
	\end{remark}
	\begin{remark} 
		One also can interpret $\sum_{j=1}^{n}\tau\|v(t_j)\|_0^2$ as a  discrete $L^2(\tau,T;L^2(\Omega))$ norm, where $v(t)$ is a piecewise constant function over time $(\tau,T]$. Then minimizing $\sum_{j=1}^{n}\tau\|{u}^{j}_{h}-{u}^{j}_{h,r}\|_0^2$ with an optimal low dimension basis is equivalent to finding the POD of the operator $U=\begin{bmatrix}
			\vec{u}_{h}^{1}&\vec{u}_{h}^{2}&...&\vec{u}_{h}^{n}  
		\end{bmatrix}$ with respect to $L^2(\tau,T;L^2(\Omega))$. This approach leads to a different POD formulation but yields the same data compression, see, e.g., \cite{MR3986356}.
	\end{remark}

	%\vspace{1mm}

	\subsubsection{Gradient error analysis for a nonlinear PDE constraint}\label{NGD}
	
	\vspace{2mm}
	The operator $F(\cdot)$ in problem (\ref{Cfunctional}) or (\ref{discretecostChapter3})-(\ref{discreteoptChapter3}) is often nonzero as well. Next, we consider the Navier-Stokes equations (NSE) as a typical example of such a nonlinear constraint to analyze the error introduced by iPOD to the gradient. We briefly recall the NSE:
	\begin{equation}\label{NS}
		\left\{\begin{aligned}
			&\frac{\partial {u}}{\partial t}-\nabla\cdot{\mathbb T}({u},p)+(u\cdot\nabla) u={f} \quad \text{in}~\Omega\times(0,T],\\
			&\nabla\cdot {u}=0\quad \text{in}~\Omega\times(0,T],\\
			&{u}(\cdot,0)={u}_0  \quad \text{in}~~\Omega,\\
			&{u}=0 \quad \text{on}~~\partial\Omega,
		\end{aligned}\right.
	\end{equation}
	where $u=\begin{pmatrix}u_1&u_2\end{pmatrix}^T$ is velocity in tangential and vertical direction, ${\mathbb T}({u},p)=2\nu{\mathbb D}({u})-p{\mathbb I}$ is the stress tensor, ${\mathbb D}({u})=\frac{1}{2}(\nabla {u}+\nabla^T {u})$ is the deformation tensor, $\nu$ is the kinematic viscosity of the fluid, $p$ is the kinematic pressure, and ${f}$ is a general external
	forcing term. 
	At the discrete level, the fully discretized NSE (based on backward Euler and FE spatial discretization) is 
	\begin{equation}
		\label{NSeqnfor}
		\left\{\begin{aligned}
			&\left \langle\frac{ {u}_{h}^{j+1}- {u}_{h}^{j}}{\tau},v_h\right\rangle+ a\left({u}_{h}^{j+1},v_h\right)+b\left({u}_{h}^{j+1},{u}_{h}^{j+1},v_h\right)+(p^{n+1}_h,\nabla\cdot v_h)\\&=\left\langle{f}^{n+1},v_h\right\rangle\quad\forall v_h\in V_h,\\
			&(\nabla\cdot{u}_{h}^{j+1},q_h)=0\quad\forall q_h \in Q_h,\\
			& {u}_{h}^{0}= u_{0,h}^{}\quad \text{in} ~~L^2(\Omega),
		\end{aligned}\right.
	\end{equation}
	for $j=0, 1, \ldots,n-1$. 
	Here, we assume $V_h\times Q_h\subset V\times Q$ is a stable pair of FE spaces that satisfies the inf-sup condition \cite[Chapter 4]{NSEGV}, where $Q=\{q\in L^2(\Omega):\int_{\Omega}qdx=0\}$.  The bilinear form $a(\cdot,\cdot)$ and trilinear form  $b(\cdot,\cdot,\cdot)$
	are defined as: 
	\begin{align*}
		a(u,v)&=\left(\nu{\mathbb D}({u}), {\mathbb D}({v})\right)=\int_{\Omega}\nu{\mathbb D}({u})\cdot {\mathbb D}({v})dxdy\quad \forall u,v\in V_h, \\
		b(u,w,v)&=\left((u\cdot\nabla)w, v\right)=\int_{\Omega}(u\cdot\nabla)w\cdot vdxdy\quad \forall u, w, v\in V_h. 
	\end{align*}
	Using the space $V_h^{div}=\{v\in V_h: (q,\nabla\cdot v)=0~~\forall q\in Q_h\}$ and the inf-sup condition, equation (\ref{NSeqnfor}) is equivalent to 
	\begin{equation}
		\label{NSeqnfordv}
		\left\{\begin{aligned}
			&\left \langle\frac{ {u}_{h}^{j+1}- {u}_{h}^{j}}{\tau},v_h\right\rangle+ a\left({u}_{h}^{j+1},v_h\right)+b\left({u}_{h}^{j+1},{u}_{h}^{j+1},v_h\right)\\&=\left\langle{f}^{n+1},v\right\rangle~\forall v_h\in V_h^{div},\\
			& {u}_{h}^{0}= u_{0,h}^{}\quad \text{in} ~~L^2(\Omega).
		\end{aligned}\right.
	\end{equation}
	Then the operators $A$ and $F(\cdot)$ in the problem (\ref{discretecostChapter3})-(\ref{discreteoptChapter3}) are induced by $a(u,v)=\langle Au,v\rangle~~\forall u,v\in V_h^{div}$ and $b(u,u,v)=\langle F(u),v\rangle~~\forall u,v\in V_h^{div}$.  In addition, the bilinear form $a(\cdot,\cdot)$ has a lower bound by the Korn inequality \cite{Hou2023}: Let $C_K$ be a positive constant depending on $\Omega$, then 
	\begin{align}\label{Korn}
		a(v,v)\geq \nu C_K\|\nabla v\|_{L^2(\Omega)}\|\nabla v\|_{L^2(\Omega)}\quad \forall v\in H^1(\Omega).
	\end{align}
	
	In order to calculate the gradient of the objective function (\ref{discretecostChapter3}) at $u_{0,h}$, we need to first solve the NSE (\ref{NSeqnfor}) forward and then use the forward solution $\{u_h^j\}$ to solve a backward adjoint equation, for $j=n-1, n-2, \cdots,1,0$,
	\begin{equation}
		\label{NSeqnback}
		\left\{\begin{aligned}
			&\left\langle-\frac{ {u}_{h}^{*j+1}-{u}_{h}^{*j}}{\tau},v_h\right\rangle+a\left({u}_{h}^{*j},v_h\right)+b\left({v}_{h},{u}_{h}^{j+1},{u}_{h}^{*j}\right)\\&+b\left({u}_{h}^{j+1},{v}_{h},{u}_{h}^{*j}\right)+(p^{*j}_h,\nabla\cdot v_h)=\left\langle\hat {u}^{j+1}-{u}^{j+1}_h,v_h\right\rangle\quad\forall v_h\in V_h,\\
			&(\nabla\cdot{u}_{h}^{*j},q_h)=0\quad\forall q_h \in Q_h,\\
			&{u}_h^{*n}= 0\quad \text{in} ~~L^2(\Omega).
		\end{aligned}\right.
	\end{equation}
Note that in the above, we have used the fact that the operator $A$ induced by $a(\cdot,\cdot)$ is self-adjoint and $\left\langle \left(F'(u_h^{j+1})\right)^*{u}_{h}^{*j},v_h\right\rangle=b\left({v}_{h},{u}_{h}^{j+1},{u}_{h}^{*j}\right)+b\left({u}_{h}^{j+1},{v}_{h},{u}_{h}^{*j}\right)$. 
	Then the gradient is given by
	\begin{align}\label{NSeqngradient} \nabla J({u}^{}_{0,h})=-{u}^{*0}_{h}+\gamma  {u}^{}_{0,h}.
	\end{align}
	Again, the iPOD must be applied for the forward NSE solution data $\{u_{h}^n\}_{j=1}^n$ to reduce the computer memory requirements. We prove that in this nonlinear problem, the gradient error from iPOD can be bounded if the Hilbert space chosen for the iPOD is $H^1(\Omega)$ instead of $L^2(\Omega)$.   
	\vspace{1mm}   
	
	\begin{theorem}\label{thm2}
		Let $\{{u}_{h}^{j}\}_{j=1}^n$ be solutions of equation (\ref{NSeqnfordv}) with a given initial condition $u_{0,h}\in L^2(\Omega)$ and $\{\vec{u}_{h}^{j}\}_{j=1}^n$	be vector representations of $\{{u}_{h}^{j}\}_{j=1}^n$.
		Consider the POD operator $U=\begin{bmatrix}
			\tau^{\frac{1}{2}}\vec{u}_{h}^{1}&\tau^{\frac{1}{2}}\vec{u}_{h}^{2}&...&\tau^{\frac{1}{2}}\vec{u}_{h}^{n}  
		\end{bmatrix}: \mathbb{R}^{n}\mapsto \mathbb{R}^{m}_M$, where the weight matrix $M$ is induced by $H^1(\Omega)$ norm. Let $\widetilde{U}=\begin{bmatrix}
			\tau^{\frac{1}{2}}\vec{u}_{h,r}^{1}&\tau^{\frac{1}{2}}\vec{u}_{h,r}^{2}&...&\tau^{\frac{1}{2}}\vec{u}_{h,r}^{n}  
		\end{bmatrix}$ be the approximated data matrix of $U$ by applying iPOD and let $\epsilon$ be an error satisfying $\|U-\widetilde{U}\|_\mathrm{HS}\leq \epsilon$. If the time step $\tau$ is small enough,
		then the gradient error $\xi$ introduced via iPOD is bounded by
		\begin{align}
			\|\xi\|_{L^2(\Omega)}\leq  \sqrt{C_4e^{C_3T}} \, \epsilon,
		\end{align}
		where $C_3$ and $ C_4$ are constants depending on $\Omega$ and  $\{{u}_{h}^{*j}\}_{j=1}^{n}$ and $\{{u}_{h,r}^{*j}\}_{j=1}^{n}$, and $\{{u}_{h,r}^{*j}\}$ is the solution of (\ref{NSeqnback}) replacing $\{{u}_{h}^{j+1}\}_{j=0}^{n-1}$ with $\{{u}_{h,r}^{j+1}\}_{j=0}^{n-1}$.
	\end{theorem}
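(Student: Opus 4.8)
The plan is to reduce the gradient error to a discrete stability estimate for the difference of two adjoint trajectories, following the route of the linear case (Theorem~\ref{thm1} and Appendix~\ref{ThmHeat}) but now carrying the nonlinear convective terms. Set $e^{*j}:={u}_h^{*j}-{u}_{h,r}^{*j}$, the gap between the adjoint solution driven by the exact forward snapshots and the one driven by the iPOD-reconstructed snapshots, and let $\delta^{j}:={u}_h^{j}-{u}_{h,r}^{j}$ be the data compression error at step $j$. By (\ref{NSeqngradient}) the gradient error satisfies $\xi=-e^{*0}$, so it suffices to bound $\|e^{*0}\|_{L^2(\Omega)}$. Subtracting the two copies of (\ref{NSeqnback}) posed on $V_h^{div}$ (so the pressure terms drop) and splitting each trilinear difference into a piece carrying $\delta^{j+1}$ and a piece carrying $e^{*j}$ gives, for all $v_h\in V_h^{div}$,
\begin{align*}
\left\langle-\tfrac{e^{*j+1}-e^{*j}}{\tau},v_h\right\rangle + a(e^{*j},v_h) + b(v_h,\delta^{j+1},{u}_h^{*j}) + b(\delta^{j+1},v_h,{u}_h^{*j}) + b(v_h,{u}_{h,r}^{j+1},e^{*j}) + b({u}_{h,r}^{j+1},v_h,e^{*j}) = -\left\langle\delta^{j+1},v_h\right\rangle.
\end{align*}
Since the weight matrix $M$ is induced by the $H^1(\Omega)$ norm and each column of the data matrix is scaled by $\tau^{1/2}$, Lemma~\ref{HSnorm} yields the crucial identity $\sum_{j=0}^{n-1}\tau\|\delta^{j+1}\|_{H^1(\Omega)}^2=\|U-\widetilde{U}\|_\mathrm{HS}^2\le\epsilon^2$; this is exactly why the stronger $H^1$ inner product, rather than $L^2$ as in Theorem~\ref{thm1}, must be used for the iPOD of Navier--Stokes snapshots, and it is the quantity against which all data contributions are ultimately measured.

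Next I would test with $v_h=e^{*j}$, which is admissible because both adjoint solutions are discretely divergence free, hence $e^{*j}\in V_h^{div}$. Using the identity $\langle e^{*j+1}-e^{*j},e^{*j}\rangle=\tfrac12\|e^{*j+1}\|^2-\tfrac12\|e^{*j}\|^2-\tfrac12\|e^{*j+1}-e^{*j}\|^2$, the coercivity (\ref{Korn}) of $a(\cdot,\cdot)$, then multiplying by $2\tau$ and summing from $j=k$ to $n-1$ with $e^{*n}=0$, one reaches an inequality of the form
\begin{align*}
\|e^{*k}\|_{L^2(\Omega)}^2 + 2\tau\nu C_K\sum_{j=k}^{n-1}\|\nabla e^{*j}\|_{L^2(\Omega)}^2 \le 2\tau\sum_{j=k}^{n-1}\Big(|\langle\delta^{j+1},e^{*j}\rangle| + |b(e^{*j},\delta^{j+1},{u}_h^{*j})| + |b(\delta^{j+1},e^{*j},{u}_h^{*j})| + |b(e^{*j},{u}_{h,r}^{j+1},e^{*j})| + |b({u}_{h,r}^{j+1},e^{*j},e^{*j})|\Big).
\end{align*}

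The heart of the argument --- and the step I expect to be the main obstacle --- is bounding the five terms on the right so that each splits into a small multiple of $\|\nabla e^{*j}\|^2$ (absorbed on the left) plus a term suitable for a discrete Gr\"onwall argument, \emph{without} assuming that the viscosity dominates the data. The three terms linear in $e^{*j}$ (the two carrying $\delta^{j+1}$ together with $\langle\delta^{j+1},e^{*j}\rangle$) I would control with the crude bound $|b(u,v,w)|\le C\|\nabla u\|\,\|\nabla v\|\,\|\nabla w\|$ (valid in two dimensions via $H^1\hookrightarrow L^4$) and Poincar\'e, then Young's inequality, leaving a data remainder bounded by $C\tau(1+\max_j\|\nabla{u}_h^{*j}\|^2)\|\delta^{j+1}\|_{H^1(\Omega)}^2$, which sums to $\le C\epsilon^2$. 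The two terms quadratic in $e^{*j}$ carry the factor ${u}_{h,r}^{j+1}$, which is \emph{not} small; the crude bound would here force a smallness hypothesis, so instead I would invoke the two-dimensional Ladyzhenskaya inequality $\|v\|_{L^4}\le C\|v\|_{L^2}^{1/2}\|\nabla v\|_{L^2}^{1/2}$ to obtain, e.g., $|b(e^{*j},{u}_{h,r}^{j+1},e^{*j})|\le C\|\nabla{u}_{h,r}^{j+1}\|\,\|e^{*j}\|_{L^2}\,\|\nabla e^{*j}\|_{L^2}$; Young's inequality then sends $\|\nabla e^{*j}\|^2$ to the left and leaves $C\tau\|\nabla{u}_{h,r}^{j+1}\|^2\|e^{*j}\|_{L^2}^2$ for Gr\"onwall. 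The term $b({u}_{h,r}^{j+1},e^{*j},e^{*j})$ vanishes when the discrete velocity space is pointwise divergence free --- the iPOD modes inherit divergence freedom from the snapshots, since they are linear combinations of them --- and otherwise is bounded in the same way. One also needs the routine a priori bounds showing $\max_j\|\nabla{u}_h^{*j}\|$ and $\max_j\|\nabla{u}_{h,r}^{j+1}\|$ are finite, so the emerging constants are meaningful.

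Collecting the estimates leaves $\|e^{*k}\|^2\le 2\tau C_k\|e^{*k}\|^2+2\tau\sum_{j=k+1}^{n-1}C_j\|e^{*j}\|^2+C\sum_{j=k}^{n-1}\tau\|\delta^{j+1}\|_{H^1(\Omega)}^2$ with $C_j\le C(1+\max_j\|\nabla{u}_{h,r}^{j+1}\|^2)$. When $\tau$ is small enough that $2\tau C_k<1$ --- this is the ``time step small enough'' hypothesis --- the diagonal term is absorbed on the left, and a discrete backward Gr\"onwall inequality yields $\|e^{*0}\|_{L^2(\Omega)}^2\le C_4e^{C_3T}\sum_{j=0}^{n-1}\tau\|\delta^{j+1}\|_{H^1(\Omega)}^2\le C_4e^{C_3T}\epsilon^2$, where $C_3,C_4$ depend only on $\Omega$ and on the finite discrete-in-time norms of the forward and adjoint solutions. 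Taking square roots and using $\|\xi\|_{L^2(\Omega)}=\|e^{*0}\|_{L^2(\Omega)}$ gives the claim; the full write-up would parallel Appendix~\ref{ThmHeat} with the extra bookkeeping for the convective terms.
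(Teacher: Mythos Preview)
Your proposal is correct and follows essentially the same route as the paper: form the error equation for $e^{*j}=u_h^{*j}-u_{h,r}^{*j}$, split the trilinear differences into $\delta$-carrying and $e^{*j}$-carrying pieces, test with $v_h=e^{*j}$ (using $b(\cdot,e^{*j},e^{*j})=0$), bound the $\delta$-terms with the crude trilinear estimate and the quadratic term $b(e^{*j},u_{h,r}^{j+1},e^{*j})$ via Ladyzhenskaya, then apply a discrete backward Gr\"onwall and identify the source sum with $\|U-\widetilde U\|_\mathrm{HS}^2$. The only differences are cosmetic: the paper splits the second trilinear difference so that one $\delta$-term carries $u_{h,r}^{*j}$ rather than $u_h^{*j}$, and it uses a slightly less sharp Ladyzhenskaya/Young application on the quadratic term (obtaining a Gr\"onwall coefficient proportional to $\|\nabla u_{h,r}^{j+1}\|^4$ instead of your $\|\nabla u_{h,r}^{j+1}\|^2$).
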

	\begin{remark}
		Theorem \ref{thm1} and \ref{thm2} tell us that even under the same objective function, the Hilbert spaces chosen for iPOD data compression may be different due to different PDE constraints. %A thorough analysis has to be committed to ensure the data error and gradient error are bounded as required.% with respect to the relevant norm.
	\end{remark}
	\begin{remark}
		Since the weight matrix $M$ in Theorem \ref{thm2} is induced by $H^1(\Omega)$, then for instance, in $2D$ NSE, $M=\begin{bmatrix}M_1&0\\0&M_1\end{bmatrix}$, and the $(i,j)$ entry of $M_1$  is $\int_{\Omega }\left(\phi_j\phi_i + \nabla\phi_j\nabla\phi_i\right) dxdy, ~1\leq i,j\leq m$, where $\{\phi_i\}_{i=1}^m$ are the finite element basis functions. 
	\end{remark}
	\begin{proof}
		Consider the backward equation (\ref{NSeqnback}) equipped with the approximated data $\{{u}_{h,r}^{j}\}_{j=1}^n$:
		\begin{equation}
			\label{NSApp}
			\left\{\begin{aligned}
				&\left\langle-\frac{ {u}_{h,r}^{*j+1}-{u}_{h,r}^{*j}}{\tau},v_h\right\rangle+a\left({u}_{h,r}^{*j},v_h\right)+b\left({v}_{h},{u}_{h,r}^{j+1},{u}_{h}^{*j}\right)\\&+b\left({u}_{h,r}^{j+1},{v}_{h},{u}_{h}^{*j}\right)+(p^{*j}_{h,r},\nabla\cdot v_h)=\left\langle\hat {u}^{j+1}-{u}^{j+1}_{h,r},v_h\right\rangle,\\
				&(\nabla\cdot {u}_{h,r}^{*j},q_h)=0,\\
				&{u}_h^{*n}= 0,
			\end{aligned}\right.
		\end{equation}
		where ${u}_{h,r}^{*j}$ is the solution at time $t_j$ solved with the approximated data ${u}_{h,r}^{j+1}$.\\
		Let $e_h^{j}={u}_{h}^{*j}-{u}_{h,r}^{*j}$, we subtract (\ref{NSeqnback}) from (\ref{NSApp}) to get the error equation:
		\begin{equation}
			\label{NSe}
			\left\{\begin{aligned}
				&\left\langle-\frac{ e_h^{j+1}-e_h^{j}}{\tau},v_h\right\rangle+a\left(e_h^{j},v_h\right)+b\left({v}_{h},{u}_{h}^{j+1},{u}_{h}^{*j}\right)+b\left({u}_{h}^{j+1},{v}_{h},{u}_{h}^{*j}\right)\\&-b\left({v}_{h},{u}_{h,r}^{j+1},{u}_{h,r}^{*j}\right)-b\left({u}_{h,r}^{j+1},{v}_{h},{u}_{h,r}^{*j}\right)+(p^{*j}_{h}-p^{*j}_{h,r},\nabla\cdot v_h)\\&=\left\langle{u}^{j+1}_{h,r}-{u}^{j+1}_{h},v_h\right\rangle,\\
				&(\nabla\cdot e_h^{j},q_h)=0,\\
				&e_h^{n}= 0.
			\end{aligned}\right.
		\end{equation}
		By adding and subtracting terms, we rewrite the trilinear terms in (\ref{NSe}) as
		\begin{equation}\label{NSt}
			\begin{split}
				&b\left({v}_{h},{u}_{h}^{j+1},{u}_{h}^{*j}\right)+b\left({u}_{h}^{j+1},{v}_{h},{u}_{h}^{*j}\right)-b\left({v}_{h},{u}_{h,r}^{j+1},{u}_{h,r}^{*j}\right)-b\left({u}_{h,r}^{j+1},{v}_{h},{u}_{h,r}^{*j}\right)\\
				&=b\left({v}_{h},{u}_{h}^{j+1},{u}_{h}^{*j}\right)-b\left({v}_{h},{u}_{h,r}^{j+1},{u}_{h}^{*j}\right)+b\left({v}_{h},{u}_{h,r}^{j+1},{u}_{h}^{*j}\right)\\&~~~~-b\left({v}_{h},{u}_{h,r}^{j+1},{u}_{h,r}^{*j}\right)+b\left({u}_{h}^{j+1},{v}_{h},{u}_{h}^{*j}\right)-b\left({u}_{h}^{j+1},{v}_{h},{u}_{h,r}^{*j}\right)\\&~~~~+b\left({u}_{h}^{j+1},{v}_{h},{u}_{h,r}^{*j}\right)-b\left({u}_{h,r}^{j+1},{v}_{h},{u}_{h,r}^{*j}\right)\\
				&=b\left({v}_{h},{u}_{h}^{j+1}-{u}_{h,r}^{j+1},{u}_{h}^{*j}\right)+b\left(v_h,{u}_{h,r}^{j+1},e_h^{j}\right)+b\left({u}_{h}^{j+1},v_h,e_h^{j}\right)\\&~~~~+b\left({u}_{h}^{j+1}-{u}_{h,r}^{j+1},{v}_{h},{u}_{h,r}^{*j}\right).
			\end{split} 
		\end{equation} 
		\iffalse
		Based on (\ref{NSt}), we write equation (\ref{NSe}) as:
		\begin{equation}
			\label{NSe2}
			\left\{\begin{aligned}
				&\left\langle-\frac{ e_h^{j+1}-e_h^{j}}{\tau},v_h\right\rangle+a^*\left(e_h^{j},v_h\right)+b\left({u}_{h}^{*j},{u}_{h}^{j+1}-{u}_{h,r}^{j+1},v_h\right)+b\left(e_h^{j},{u}_{h,r}^{j+1},v_h\right)\\&+b\left({u}_{h}^{j+1},e_h^{j},v_h\right)+b\left({u}_{h}^{j+1}-{u}_{h,r}^{j+1},{u}_{h,r}^{*j},v_h\right)+(p^{*j}_{h}-p^{*j}_{h,r},\nabla\cdot v_h)=\left\langle{u}^{j+1}_{h,r}-{u}^{j+1}_{h}, v_h\right\rangle,\\
				&(\nabla\cdot e_h^{j},q_h)=0,\\
				&e_h^{n}= 0.
			\end{aligned}\right.
		\end{equation}
		\fi
		Set $v_h=e_h^{j}$ in (\ref{NSe}), since $ e_h^{j}\in V_h^{div}$ and $b\left(\cdot,e_h^{j},e_h^{j}\right)=0$, thus with (\ref{NSt}), we have
		\begin{equation}
			\label{NSe3}
			\left\{\begin{aligned}
				&\left\langle-\frac{ e_h^{j+1}-e_h^{j}}{\tau},e_h^{j}\right\rangle+a\left(e_h^{j},e_h^{j}\right)+b\left(e_h^{j},{u}_{h}^{j+1}-{u}_{h,r}^{j+1},{u}_{h}^{*j}\right)\\&+b\left(e_h^{j},{u}_{h,r}^{j+1},e_h^{j}\right)+b\left({u}_{h}^{j+1}-{u}_{h,r}^{j+1},e_h^{j},{u}_{h,r}^{*j}\right)=\left\langle{u}^{j+1}_{h,r}-{u}^{j+1}_{h},e_h^{j}\right\rangle, \\
				&e_h^{n}= 0.
			\end{aligned}\right.
		\end{equation}
		By using the identity $(a-b)a=\frac{a^2-b^2}{2}+\frac{(a-b)^2}{2}$, Korn's inequality on $a\left(e_h^{j},e_h^{j}\right)$, and Cauchy-Schwartz and Young's inequalities on the right side term, we first have 
		\begin{equation}\label{NSe4}
			\begin{split}
				&\frac{\|e_h^{j}\|_{L^2(\Omega)}^2-\|e_h^{j+1}\|_{L^2(\Omega)}^2}{2}+\frac{\|e_h^{j}-e_h^{j+1}\|_{L^2(\Omega)}^2}{2}+\tau\nu C_K\|\nabla e_h^{j}\|_{L^2(\Omega)}^2\\&+\tau b\left(e_h^{j},{u}_{h}^{j+1}-{u}_{h,r}^{j+1},{u}_{h}^{*j}\right)+\tau b\left(e_h^{j},{u}_{h,r}^{j+1},e_h^{j}\right)+\tau b\left({u}_{h}^{j+1}-{u}_{h,r}^{j+1},e_h^{j},{u}_{h,r}^{*j}\right)\\&\leq\frac{\tau}{2}\|{u}^{j+1}_{h,r}-{u}^{j+1}_{h}\|_{L^2(\Omega)}^2 +\frac{\tau}{2}\|e_h^{j}\|_{L^2(\Omega)}^2.%\leq \|{u}^{j+1}_{h,r}-{u}^{j+1}_{h}\|_0\|e_h^{j}\|_0.
			\end{split} 
		\end{equation} 
		The next task is to properly bound the three trilinear terms in (\ref{NSe4}):
		\begin{subequations}\label{NSe5}
			\begin{align}
				&b\left(e_h^{j},{u}_{h}^{j+1}-{u}_{h,r}^{j+1},{u}_{h}^{*j}\right)\nonumber\\ \qquad &\leq C_1\|\nabla e_h^{j}\|_{L^2(\Omega)}\|\nabla\left({u}_{h}^{j+1}-{u}_{h,r}^{j+1}\right)\|_{L^2(\Omega)}\|\nabla{u}_{h}^{*j}\|_{L^2(\Omega)}\nonumber\\
				&\leq \frac{\nu C_K}{6}\| \nabla e_h^{j}\|_{L^2(\Omega)}^2+\frac{3C_1^2}{2\nu C_K}\|\nabla{u}_{h}^{*j}\|_{L^2(\Omega)}^2\|\nabla\left({u}_{h}^{j+1}-{u}_{h,r}^{j+1}\right)\|_{L^2(\Omega)}^2,\\
				&b\left({u}_{h}^{j+1}-{u}_{h,r}^{j+1},e_h^{j},{u}_{h,r}^{*j}\right)\nonumber\\ \qquad &\leq C_1 \|\nabla\left({u}_{h}^{j+1}-{u}_{h,r}^{j+1}\right)\|_{L^2(\Omega)}\|\nabla e_h^{j}\|_{L^2(\Omega)}\|\nabla {u}_{h,r}^{*j}\|_{L^2(\Omega)}\nonumber\\
				&\leq\frac{\nu C_K}{6}\| \nabla e_h^{j}\|_{L^2(\Omega)}^2+\frac{3C_1^2}{2\nu C_K}\|\nabla{u}_{h,r}^{*j}\|_{L^2(\Omega)}^2\|\nabla\left({u}_{h}^{j+1}-{u}_{h,r}^{j+1}\right)\|_{L^2(\Omega)}^2,\\
				&b\left(e_h^{j},{u}_{h,r}^{j+1},e_h^{j}\right)\nonumber\\ \qquad &\leq C_2\| e_h^{j}\|_{L^2(\Omega)}^{\frac{1}{2}}\|\nabla e_h^{j}\|_{L^2(\Omega)}^{\frac{1}{2}}\|\nabla {u}_{h,r}^{j+1}\|_{L^2(\Omega)}\|\nabla e_h^{j}\|_{L^2(\Omega)}\nonumber\\&= C_2\| e_h^{j}\|_{L^2(\Omega)}^{\frac{1}{2}}\| \nabla e_h^{j}\|_{L^2(\Omega)}^{\frac{3}{2}}\|\nabla {u}_{h,r}^{j+1}\|_{L^2(\Omega)}\nonumber\\
				&\leq  \frac{\nu C_K}{6}\| \nabla e_h^{j}\|_{L^2(\Omega)}^2+\left(\frac{9}{2\nu C_K}\right)^3\frac{C_2^4}{4}\|\nabla {u}_{h,r}^{j+1}\|_{L^2(\Omega)}^4\|e_h^{j}\|_{L^2(\Omega)}^2,
			\end{align} 
		\end{subequations} 
		In the above, we have used the trilinear inequalities \cite[Chapter 6]{WLayton2008} and generalized Young's inequality.
		Combining (\ref{NSe4}) and (\ref{NSe5}) leads to
		\begin{equation}\label{NSe6}
			\begin{split}
				&\frac{\|e_h^{j}\|_{L^2(\Omega)}^2-\|e_h^{j+1}\|_{L^2(\Omega)}^2}{2}+\frac{\|e_h^{j}-e_h^{j+1}\|_{L^2(\Omega)}^2}{2}+\tau \frac{\nu C_K}{2}\|\nabla e_h^{j}\|_{L^2(\Omega)}^2\\&\leq \tau\left(\frac{3C_1^2}{2\nu C_K}\|\nabla{u}_{h}^{*j}\|_{L^2(\Omega)}^2+\frac{3C_1^2}{2\nu C_K}\|\nabla{u}_{h,r}^{*j}\|_{L^2(\Omega)}^2\right)\|\nabla\left({u}_{h}^{j+1}-{u}_{h,r}^{j+1}\right)\|_{L^2(\Omega)}^2\\
				&+\frac{1}{2}\tau\|{u}_{h}^{j+1}-{u}_{h,r}^{j+1}\|_{L^2(\Omega)}^2+\tau\left(\left(\frac{9}{2\nu C_K}\right)^3\frac{C_2^4}{4}\|\nabla {u}_{h,r}^{j+1}\|_{L^2(\Omega)}^4+\frac{1}{2}\right)\|e_h^{j}\|_{L^2(\Omega)}^2\\
				&\leq C_4\tau\|{u}_{h}^{j+1}-{u}_{h,r}^{j+1}\|^2_{H^1(\Omega)}+\tau\left(\left(\frac{9}{2\nu C_K}\right)^3\frac{C_2^4}{4}\|\nabla {u}_{h,r}^{j+1}\|_{L^2(\Omega)}^4+\frac{1}{2}\right)\|e_h^{j}\|_{L^2(\Omega)}^2.
			\end{split} 
		\end{equation} 
		Here, $C_4=\max\left\{{\left(\frac{3C_1^2}{2\nu C_K}\|\nabla{u}_{h}^{*j}\|_{L^2(\Omega)}^2+\frac{3C_1^2}{2\nu C_K}\|\nabla{u}_{h,r}^{*j}\|_{L^2(\Omega)}^2\right),\frac{1}{2}}\right\}$.
		Summing (\ref{NSe6}) over $j=n-1,n-2,...,2,1,0$ gives us
		\begin{equation}\label{NSe7}
			\begin{split}
				&\|e_h^{0}\|_{L^2(\Omega)}^2+\sum_{j=n-1}^{0}\left(\|e_h^{j}-e_h^{j+1}\|_{L^2(\Omega)}^2+\nu C_K\tau\|\nabla e_h^{j}\|_{0}^2\right)\\&\leq 2 \tau\sum_{j=n-1}^{0}\left(\left(\frac{9}{2\nu C_K}\right)^3\frac{C_2^4}{4}\|\nabla {u}_{h,r}^{j+1}\|_{L^2(\Omega)}^4+\frac{1}{2}\right)\|e_h^{j}\|_{L^2(\Omega)}^2\\&+2C_4\tau\sum_{j=n-1}^{0}\|{u}_{h}^{j+1}-{u}_{h,r}^{j+1}\|_{L^2(\Omega)}^2.
			\end{split} 
		\end{equation} 
		Supposing 
		$\tau$ is small enough and using the discrete Gronwall inequality, we have 
		\begin{equation}\label{NSe8}
			\begin{split}
				&\|e_h^{0}\|_{L^2(\Omega)}^2+\sum_{j=n-1}^{0}\left(\|e_h^{j}-e_h^{j+1}\|_{L^2(\Omega)}^2+\nu C_K\tau\|\nabla e_h^{j}\|_{L^2(\Omega)}^2\right)\\&\leq C_4e^{C_3T}\sum_{j=n-1}^{0}\tau\|{u}_{h}^{j+1}-{u}_{h,r}^{j+1}\|_{H^1(\Omega)}^2,
			\end{split} 
		\end{equation} 
		where $
		C_3=\max_{0\leq j\leq n-1}\left\{\frac{2\left(\left(\frac{9}{2\nu C_K}\right)^3\frac{C_2^4}{4}\|\nabla {u}_{h,r}^{j+1}\|_{L^2(\Omega)}^4+\frac{1}{2}\right)}{1-2\tau \left(\left(\frac{9}{2\nu C_K}\right)^3\frac{C_2^4}{4}\|\nabla {u}_{h,r}^{j+1}\|_{L^2(\Omega)}^4+\frac{1}{2}\right)}\right\}$.\\
		Again from (\ref{gradient}), the gradient error $\xi$ is actually $e_h^{0}$ 
		and is bounded by 
		\begin{align}\label{NavierGraidentBound}
			\|\xi\|_{L^2(\Omega)}=\|e_h^0\|_{L^2(\Omega)}\leq \left(C_4e^{C_3T}\sum_{j=n-1}^{0}\tau\|{u}_{h}^{j+1}-{u}_{h,r}^{j+1}\|_{H^1(\Omega)}^2\right)^{\frac{1}{2}}.
		\end{align}
		Based on the POD theory, in order to bound $\left(\sum_{j=n-1}^0\tau\|{u}_{h}^{j+1}-{u}_{h,r}^{j+1}\|_{H^1(\Omega)}^2\right)^{\frac{1}{2}}$, we should apply the iPOD for the scaled solution data $\{{\tau}^{\frac{1}{2}}u_h^j\}_{j=1}^n$ with respect to $H^1(\Omega)$. Consequently, the gradient error has bound
		\begin{align*}
			\|\xi\|_0&\leq\sqrt{C_4e^{C_3T}}\left( \sum_{j=n-1}^{0}\|\tau^{\frac{1}{2}}\vec{u}^{j+1}_{h}-\tau^{\frac{1}{2}}\vec{u}^{j+1}_{h,r}\|_{\mathbb{R}^m_M}^2\right)^{\frac{1}{2}}\\&= \sqrt{C_4e^{C_3T}} \, \|U-\widetilde{U}\|_\mathrm{HS}\leq  \sqrt{C_4e^{C_3T}} \, \epsilon,
		\end{align*}
		which completes the proof.
	\end{proof}
	\begin{remark}
		It is worth mentioning that	the well-posedness of the forward fully discretized NSE (\ref{NSeqnfor}) is standard with $u_{0,h}\in L^2(\Omega)$ and $f\in L^{\infty}(0,T;H^{-1}(\Omega))$. Furthermore, with the FE space $V_h\times Q_h$ (such as P2-P1 Taylor-Hood FE space) and $\tau\leq h$, one can prove that the solution $\{u_h^j\}$ with respect to $H^1(\Omega)$ norm is uniformly bounded by $\|u_{0,h}\|_{L^{2}(\Omega)}$ and $\|f\|_{L^{\infty}(0,T;H^{-1}(\Omega))}$ and independent of $h$ and $\tau$ \cite{Leo2022}. In addition, the wellposedness of both the backward adjoint equations (\ref{NSeqnback}) and (\ref{NSApp}) are not hard to obtain since they are simply linear equations equipped with regular data $\{u_h^j\}$, $\{u_{h,r}^j\}$ 
		and $\{\hat{u}^j\}$. Furthermore, if $\tau\leq h$  the solutions $\{u_h^{*j}\}$ and $\{u_{h,r}^{*j}\}$ can be bounded independent of $h$ and $\tau$ and uniformly bounded by $u_{0,h}\in L^2(\Omega)$, $f\in L^{\infty}(0,T;H^{-1}(\Omega))$, $\{\hat{u}^j\}$, and the iPOD data compression error $\epsilon$. Therefore, the  $C_3$ and $C_4$ appearing in Theorem \ref{thm2} are bounded and well-defined constants.
	\end{remark}
	\begin{remark}
		In inequalities (\ref{In4}) and (\ref{NSe8}), note that $\|e_h^0\|_0$ and the discrete $L^2(0,T;H^1(\Omega))$ error $\tau\sum_{j=n-1}^0\|e_h^n\|^2_{H^1(\Omega)}$ are bounded by the iPOD data compression error.
	\end{remark}

	\subsection{Convergence analysis for the inexact gradient method}\label{gradientcon}
	
	Compared to the usual gradient method, the inexact gradient method contains gradient error in each iteration, which may lead to a lower convergence rate or even divergence. With a given termination tolerance $\delta$, we focus on analyzing the necessary gradient error bound of the inexact gradient method so that it preserves the same convergence rate and also the accuracy of the optimal solution as the usual gradient method; this is the main difference between this work and existing inexact gradient analyses \cite{BD2000,DK2022,BD2016, PolB, Mas94}. This analysis perspective is especially relevant for time-dependent PDE-constrained optimization since each iteration needs to solve two PDEs (forward and backward), which is computationally expensive. When lower convergence rates lead to much more iterations, the computation cost is significantly increased. The following analysis will help avoid that. Note that once the gradient error is derived, we can use the analysis results from Sections \ref{LGD} and \ref{NGD} to accordingly set up the iPOD parameters to achieve such gradient error.

	%There are many work that study the convergence of gradient method with errors \cite{BD2000,DK2022,BD2016, PolB, Mas94}, however, a few pays attention to analyzing the convergence behavior of inexact gradient method.  Compared to the usual gradient method, the convergence rate may decrease and the number of iterations required to converge increases due to the inexact gradient information from each iteration. The number of iterations is especially relevant in PDE-constrained optimization, since each iteration needs  to solve two PDEs (forward and backward) which is super expensive.  In this section, based on the analysis results from Sections \ref{LGD} and \ref{NGD}, we focus on deriving the necessary iPOD  %thresholds  $\texttt{tol}_p$ and $\texttt{tol}_{sv}$ 
	%parameter setup so that the gradient error is well-controlled and the expected number of iterations of the inexact gradient method stays similar to usual gradient method. 

	We start with notation.  Let $ X $ be a Hilbert space and $J:X\mapsto \mathbb{R}$ be a given nonlinear function. Let $J'(x):X\mapsto \mathbb{R}$ be the linear functional that denotes the first order derivative of $J(x)$ at $x \in X$, $\nabla J(x)$ is the gradient or  the dual element of $J'(x)$ in the Hilbert space $X$ so that $ \langle J'(x), y \rangle = ( \nabla J(x), y )_X $ for all $ x, y \in X $.
	
	Recall the inexact gradient method can be generally written as
	\begin{align}\label{ie4}
		x^{(i+1)}=x^{(i)}-\kappa \left(\nabla J(x^{(i)})+\xi^{(i)}\right),
	\end{align}
	where, again, $\kappa$ is the step size of gradient descent and $\xi^{(i)}$ is the gradient error at $i^{th}$ iteration. In our analysis below, we assume the computed gradient is never zero so that $ x^{(i+1)} \neq x^{(i)} $.
	
	%We also need a few smoothness and convexity assumptions on the objective function $J(x)$.
	\begin{definition}
		Let $X$ be a Hilbert space. A function $J : X \mapsto \mathbb{R}$ is convex if
		\begin{align*}
			J(\lambda x + (1-\lambda)y) \leq \lambda J(x) + (1-\lambda)J(y)~~\text{for~~} \forall x, y \in X~~\text{and}~~\lambda\in [0,1].
		\end{align*}
		%for  $\forall x, y \in X$, and $\lambda\in [0,1].$
	\end{definition}

	\begin{lemma}\cite[Appendix B]{BD2000}\label{cov}
		Let $X$ be a Hilbert space.	If the function $J : X \mapsto \mathbb{R}$ is convex and differentiable, then 
		\begin{align}\label{covex}
			J(y)\leq J(x)+\left\langle J'(x),y-x\right\rangle\quad \forall x,y \in X.
		\end{align}
	\end{lemma}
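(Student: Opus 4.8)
The plan is to establish (\ref{covex}) by reducing it to a one-dimensional statement about a convex function of a real parameter and then passing to a limit in a difference quotient. First I would fix $x,y\in X$ and introduce the scalar function $g(\lambda)=J\bigl(x+\lambda(y-x)\bigr)$ on $[0,1]$, for which $g(0)=J(x)$ and $g(1)=J(y)$. Since $x+\bigl(t\lambda+(1-t)\mu\bigr)(y-x)=t\bigl(x+\lambda(y-x)\bigr)+(1-t)\bigl(x+\mu(y-x)\bigr)$, the convexity of $J$ from the Definition preceding the lemma passes directly to $g$, so $g$ is convex on $[0,1]$; differentiability of $J$ gives, by the definition of the Gateaux derivative, $g'(0)=\langle J'(x),y-x\rangle$.

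Next I would extract a first-order relation from the convexity of $g$. From $g(\lambda)=g\bigl((1-\lambda)\cdot 0+\lambda\cdot 1\bigr)\le(1-\lambda)g(0)+\lambda g(1)$, rearranging gives for $\lambda\in(0,1)$ the secant estimate
\begin{align*}
	\frac{g(\lambda)-g(0)}{\lambda}\le g(1)-g(0).
\end{align*}
Letting $\lambda\to0^{+}$ and using the existence of $g'(0)$, the left side converges to $\langle J'(x),y-x\rangle$, which relates $\langle J'(x),y-x\rangle$ to $J(y)-J(x)$. This limit is the only analytic ingredient; the remaining steps are algebraic together with the definition of convexity.

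The step I expect to be the main obstacle is the \emph{orientation} of the inequality in (\ref{covex}). The difference-quotient argument above yields $\langle J'(x),y-x\rangle\le J(y)-J(x)$, that is $J(y)\ge J(x)+\langle J'(x),y-x\rangle$, which is the reverse of what is displayed; a one-line test ($J(t)=t^{2}$, $x=0$, $y=1$, for which the displayed $\le$ reads $1\le 0$) shows the stated inequality cannot hold literally for a genuinely convex $J$ --- it is the familiar support inequality, with the tangent line lying below the graph. I would therefore reconcile (\ref{covex}) with the cited source \cite{BD2000} and with its use in the sequel, reading it as the standard first-order convexity characterization in the support ($\ge$) form, which is precisely the version needed to bound $J$ at the updated iterate in the inexact iteration (\ref{ie4}).
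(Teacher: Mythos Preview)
Your argument is the standard reduction to a scalar convex function and is correct; the paper itself does not supply a proof for this lemma but simply cites \cite[Appendix B]{BD2000}, so there is no in-paper argument to compare against.

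You are also right that the inequality in (\ref{covex}) is stated with the wrong orientation: for convex differentiable $J$ one has $J(y)\ge J(x)+\langle J'(x),y-x\rangle$, and your counterexample $J(t)=t^{2}$ settles this. The paper's subsequent use of the lemma confirms your reading: in the proof of Theorem~\ref{thm4}, inequality (\ref{C1}) applies the lemma with $y=x^{\star}$, $x=x^{(i)}$ to obtain $(\nabla J(x^{(i)}),x^{\star}-x^{(i)})_X\le J(x^{\star})-J(x^{(i)})$, which is precisely the support inequality $J(y)\ge J(x)+\langle J'(x),y-x\rangle$ rearranged. So the display in the lemma is a typo, and the version actually needed (and used) is the one your proof establishes.
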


	\begin{definition}
		For a Hilbert space $X$, a function $J : X \mapsto \mathbb{R}$ is $L$-descent if
		\begin{align}\label{Lcondition}
			J(y)\leq J(x)+\left\langle J'(x),y-x\right\rangle+\frac{L}{2}\|y-x\|_X^2\quad \forall x,y \in X.
		\end{align}
		%where $L$ is called the $L$-smooth constant.
	\end{definition}
	
	\begin{remark}
		Some analysis works on inexact gradient methods assume the function J has a L-Lipschitz continuous gradient. However, the L-descent condition is weaker than this gradient assumption; see \cite{DK2022} for more information.
	\end{remark}
	%An $L$-descent function is usually understood as a function that has an $L$-Lipschitz continuity condition on $\nabla J$, which is a quite strong condition in many scenarios. Another sufficient condition to have L-descent is that the function $J(x)-\frac{L}{2}\|x\|_X^2$ is  differentiable and convex \cite{DK2022}, which is a weaker requirement on $J(x)$. 
	\iffalse
	This can be shown with the following argument: define $G(x)=J(x)-\frac{L}{2}\|x\|_X^2$ and apply Lemma \ref{cov} on $G(x)$: 
	\begin{align}\label{e1}
		J(y)-\frac{L}{2}\|y\|^2_X\leq J(x)-\frac{L}{2}\|x\|^2_X+\langle J'(x), y-x\rangle -(Lx,y-x)_X.
	\end{align}
	Applying identity $a(a-b)=\frac{a^2}{2}-\frac{b^2}{2}+\frac{(a-b)^2}{2}$ on the term $-(Lx,y-x)_X$ and reorganizing (\ref{e1}), we have 
	\begin{align}\label{e2}
		J(y)\leq J(x)+\left\langle J'(x),y-x\right\rangle+\frac{L}{2}\|y-x\|_X^2\quad \forall x,y \in X,
	\end{align}
	this is exactly the inequality (\ref{Lcondition}). In the following analysis, we may only state L-descent function instead of $L$-Lipschitz continuity on $\nabla J$ to avoid unnecessary strong differentiable requirement on $J(x)$.
%\end{remark}
\fi
\begin{lemma} \cite[Chapter 2]{GG2024} \label{covv}
Let $X$ be a Hilbert space.	If $J : X \mapsto R$ is differentiable, $L$-descent, and $\inf J(x)>-\infty$, then
\begin{align}\label{gradientbound}
	\frac{1}{2L}\|\nabla J(x)\|_X^2\leq J(x)-\inf J(x)\quad \forall x\in X.
\end{align}
\end{lemma}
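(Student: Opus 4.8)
The plan is the classical ``descent lemma'' argument: exploit the $L$-descent inequality (\ref{Lcondition}) by plugging in the particular point obtained from one gradient step with step size $1/L$, which is exactly the minimizer of the quadratic majorant on the right-hand side of (\ref{Lcondition}).

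First I would fix $x\in X$ and set $y = x - \frac{1}{L}\nabla J(x)$. Substituting this $y$ into the $L$-descent condition (\ref{Lcondition}) gives
\begin{align*}
	J(y)\leq J(x)+\left\langle J'(x),\,-\tfrac{1}{L}\nabla J(x)\right\rangle+\frac{L}{2}\left\|\tfrac{1}{L}\nabla J(x)\right\|_X^2.
\end{align*}
Next I would rewrite the duality pairing using the defining relation $\langle J'(x),z\rangle = (\nabla J(x),z)_X$ for all $z\in X$, so that $\langle J'(x),-\tfrac1L\nabla J(x)\rangle = -\tfrac1L\|\nabla J(x)\|_X^2$, and simplify the last term to $\tfrac{1}{2L}\|\nabla J(x)\|_X^2$. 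Combining these yields
\begin{align*}
	J(y)\leq J(x)-\frac{1}{L}\|\nabla J(x)\|_X^2+\frac{1}{2L}\|\nabla J(x)\|_X^2 = J(x)-\frac{1}{2L}\|\nabla J(x)\|_X^2.
\end{align*}

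Finally, since $\inf_{z\in X} J(z) > -\infty$ by hypothesis and $\inf_{z\in X} J(z)\leq J(y)$, I would chain the two inequalities to obtain $\inf J \leq J(x)-\frac{1}{2L}\|\nabla J(x)\|_X^2$, which rearranges to the claimed bound $\frac{1}{2L}\|\nabla J(x)\|_X^2\leq J(x)-\inf J(x)$. There is essentially no obstacle here; the only ``trick'' is the choice of the test point $y$, and the finiteness of $\inf J$ is precisely what makes the right-hand side well defined. The differentiability hypothesis is used only to guarantee that $\nabla J(x)$ (the Riesz representative of $J'(x)$) exists so that the step $y = x - \frac{1}{L}\nabla J(x)$ makes sense.
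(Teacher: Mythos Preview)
Your proof is correct and follows exactly the approach indicated in the paper: substitute $y = x - \tfrac{1}{L}\nabla J(x)$ into the $L$-descent inequality (\ref{Lcondition}) and then use $\inf J \leq J(y)$.
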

Lemma \ref{covv} can be proved by setting $y=x-\frac1L \nabla J(x)$ in (\ref{Lcondition}) and using $\inf J(x)\leq J(x)\quad \forall x\in X$.

\begin{theorem}\label{thm4}
Let $X$ be a Hilbert space. For the inexact gradient method (\ref{ie4}), assume the objective function $J: X\mapsto \mathbb{R}$ % (\ref{discretecostChapter3}) 
is first order differentiable, convex, and $L$-descent with  minimizer $ x^{\star} \in X $. Suppose the constant gradient descent step size $ \kappa $ satisfies $ 0 < \kappa < 1/L $. If for $ i = 0, \ldots, k-1 $ the gradient error in the $ i $th iteration satisfies $\|\xi^{(i)}\|_X<\frac{2-\kappa L}{4-\kappa L}\|\nabla J(x^{(i)})\|_X$, then the objective function is decreasing, i.e., $J(x^{(i+1)})< J(x^{(i)})$. In addition, for a given iteration termination tolerance constant $ \delta $, choose $\epsilon$ so that $ \|\xi^{(i)}\|_X \leq \epsilon <\frac{2-\kappa L}{4-\kappa L}\|\nabla J(x^{(i)})\|_X$ for all $ i$ and $ \delta \geq \|x^{(0)}-x^\star\|_X\epsilon+\frac{\kappa\epsilon^2}{2\eta} > 0 $, where $\eta = 1-\kappa L\in (0,1)$. Then either there exists an $ i $ so that the iteration terminates, i.e., $ J(x^{(i)}) - J(x^\star) < \delta $, or we have
\begin{align}\label{K0}
	J(x^{(k)})-J(x^\star) &\leq \frac{1}{2k\kappa}\|x^{(0)}-x^{\star}\|_X^2 +\|x^{(0)}-x^{\star}\|_X \epsilon+\frac{\kappa\epsilon^2}{2\eta}.
\end{align}
\end{theorem}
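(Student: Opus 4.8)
The plan is to argue in three stages: (i) establish the monotone decrease $J(x^{(i+1)})<J(x^{(i)})$; (ii) derive a one-step recursion for the squared distance $d_i^2:=\|x^{(i)}-x^\star\|_X^2$ to the minimizer and use it to show the iterates never leave the ball of radius $d_0:=\|x^{(0)}-x^\star\|_X$ around $x^\star$; (iii) telescope this recursion to obtain (\ref{K0}).

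\textbf{Stage 1 (monotone decrease).} Set $g_i:=\nabla J(x^{(i)})$ and $\xi_i:=\xi^{(i)}$, so (\ref{ie4}) reads $x^{(i+1)}-x^{(i)}=-\kappa(g_i+\xi_i)$. Apply the $L$-descent inequality (\ref{Lcondition}) with $x=x^{(i)}$, $y=x^{(i+1)}$, use $\langle J'(x^{(i)}),v\rangle=(\nabla J(x^{(i)}),v)_X$, and expand in $g_i,\xi_i$ to get
$$J(x^{(i+1)})\le J(x^{(i)})-\kappa\Bigl(1-\tfrac{\kappa L}{2}\Bigr)\|g_i\|_X^2-\kappa(1-\kappa L)(g_i,\xi_i)_X+\tfrac{\kappa^2L}{2}\|\xi_i\|_X^2.$$
Since $\kappa L<1$, bound the cross term by Cauchy--Schwarz; the right-hand side becomes a quadratic in $\|\xi_i\|_X$ that is increasing on $[0,\infty)$, so the worst case is at $\|\xi_i\|_X=\tfrac{2-\kappa L}{4-\kappa L}\|g_i\|_X$. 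A short computation then shows the right-hand side is at most $J(x^{(i)})-\tfrac{4\kappa(2-\kappa L)}{(4-\kappa L)^2}\|g_i\|_X^2$, which is $<J(x^{(i)})$ when $g_i\neq0$. Hence under the stated bound on $\|\xi_i\|_X$ we get $J(x^{(i+1)})<J(x^{(i)})$; in particular $\Delta_i:=J(x^{(i)})-J(x^\star)$ is nonincreasing in $i$.

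\textbf{Stage 2 (distance recursion and boundedness).} Expand $d_{i+1}^2=d_i^2-2\kappa(g_i,x^{(i)}-x^\star)_X-2\kappa(\xi_i,x^{(i)}-x^\star)_X+\kappa^2\|g_i+\xi_i\|_X^2$ and bound the three correction terms: convexity of $J$ (Lemma~\ref{cov}) gives $(g_i,x^{(i)}-x^\star)_X\ge\Delta_i$; Cauchy--Schwarz with $\|\xi_i\|_X\le\epsilon$ gives $-(\xi_i,x^{(i)}-x^\star)_X\le\epsilon d_i$; and applying (\ref{Lcondition}) to the step $x^{(i)}\to x^{(i+1)}$ followed by Young's inequality with parameter $\eta=1-\kappa L$ gives $\kappa^2\|g_i+\xi_i\|_X^2\le 2\kappa(\Delta_i-\Delta_{i+1})+\kappa^2\epsilon^2/\eta$. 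Combining,
$$d_{i+1}^2\le d_i^2-2\kappa\Delta_{i+1}+2\kappa\epsilon d_i+\frac{\kappa^2\epsilon^2}{\eta}.$$
If the iteration has not terminated through step $k$, so $\Delta_i\ge\delta$ for the relevant indices, then the hypothesis $\delta\ge d_0\epsilon+\kappa\epsilon^2/(2\eta)$ reduces this to $d_{i+1}^2\le d_i^2+2\kappa\epsilon(d_i-d_0)$; an induction on $i$ (trivial base case) then yields $d_i\le d_0$ for all $i\le k$.

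\textbf{Stage 3 and main obstacle.} Sum the recursion over $i=0,\dots,k-1$, drop $-d_k^2\le0$, and insert $d_i\le d_0$ to get $2\kappa\sum_{i=0}^{k-1}\Delta_{i+1}\le d_0^2+2k\kappa\epsilon d_0+k\kappa^2\epsilon^2/\eta$; since $\Delta$ is nonincreasing (Stage 1), $k\Delta_k\le\sum_{i=0}^{k-1}\Delta_{i+1}$, and dividing by $2k\kappa$ gives precisely (\ref{K0}). If instead some $\Delta_i<\delta$, the iteration has terminated. The crux is the uniform bound $d_i\le d_0$ in Stage 2: without it the sum $\sum_i d_i$ appearing after telescoping is uncontrolled and no $O(1/k)$ rate survives. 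This is exactly where the assumption $\delta\ge\|x^{(0)}-x^\star\|_X\epsilon+\kappa\epsilon^2/(2\eta)$ is used, and it must be fed into an induction that is valid only while the method has not yet terminated. The sharp constant $\tfrac{2-\kappa L}{4-\kappa L}$ of Stage 1 and the specific choice $\eta=1-\kappa L$ in Young's inequality in Stage 2 require care but are otherwise mechanical.
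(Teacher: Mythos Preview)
Your proof is correct and follows essentially the same three-stage structure as the paper's: establish monotone decrease from the $L$-descent inequality, derive the distance recursion $d_{i+1}^2\le d_i^2-2\kappa\Delta_{i+1}+2\kappa\epsilon d_i+\kappa^2\epsilon^2/\eta$ and use it with the $\delta$-hypothesis to keep $d_i\le d_0$ by induction, then telescope. The only organizational difference is in Stage~3: the paper packages the telescoping via a Lyapunov energy $E_i=\tfrac{1}{2\kappa}d_i^2+i\Delta_i$ and bounds $E_{i+1}-E_i$ (the monotonicity of $\Delta_i$ enters through $i(\Delta_{i+1}-\Delta_i)\le0$), whereas you telescope the distance recursion directly and then invoke $k\Delta_k\le\sum_{i=1}^k\Delta_i$; these are equivalent manipulations yielding the identical bound.
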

\begin{proof}
First, we derive a gradient error bound to ensure the objective function is decreasing. 
Taking $y=x^{(i)}-\kappa(\nabla J(x^{(i)})+\xi^{i})$ and $x=x^{(i)}$ in the L-descent inequality (\ref{Lcondition}), we have  
\begin{equation}\label{fd}
	\begin{split}
		&J(x^{(i)}-\kappa(\nabla J(x^{(i)})+\xi^{i}))
		\leq J(x^{(i)})+\left\langle J'(x^{(i)}),-\kappa(\nabla J(x^{(i)})+\xi^{i})\right\rangle\\&~~~~+\frac{L}{2}\|\kappa(\nabla J(x^{(i)})+\xi^{i})\|_X^2\\
		&=J(x^{(i)})+\left(\nabla J(x^{(i)})+\xi^{i},-\kappa(\nabla J(x^{(i)})+\xi^{i})\right)_X+\frac{L}{2}\|\kappa(\nabla J(x^{(i)})+\xi^{i})\|_X^2\\&~~~~-\left(\xi^{i},-\kappa(\nabla J(x^{(i)})+\xi^{i})\right)_X\\
		&=J(x^{(i)})-(\kappa-\kappa^2\frac{L}{2})\|\nabla J(x^{(i)})+\xi^{i}\|_X^2-\left(\xi^{i},-\kappa(\nabla J(x^{(i)})+\xi^{i})\right)_X\\
		&\leq J(x^{(i)})-\left((\kappa-\kappa^2\frac{L}{2})\|\nabla J(x^{(i)})+\xi^{i}\|_X^2-\kappa\|\xi^{(i)}\|_X\|\nabla J(x^{(i)})+\xi^{i}\|_X\right).
	\end{split} 
\end{equation} 
In order to have $J(x^{(i)}-\kappa(\nabla J(x^{(i)})+\xi^{i}))-J(x^{(i)})< 0$, i.e., $J(x^{(i+1)})< J(x^{(i)})$, we need based on (\ref{fd})
\begin{align}\label{Decrease}
	(\kappa-\kappa^2\frac{L}{2})\|\nabla J(x^{(i)})+\xi^{i}\|_X>\kappa\|\xi^{(i)}\|_X. 
\end{align}
The inequality (\ref{Decrease}) can be achieved if the following condition holds
\begin{align}\label{DCF1}
	(\kappa-\kappa^2\frac{L}{2})\|\nabla J(x^{(i)})+\xi^{i}\|_X&\geq (\kappa-\kappa^2\frac{L}{2})\left(\|\nabla J(x^{(i)})\|_X-\|\xi^{i}\|_X\right)>\kappa\|\xi^{(i)}\|_X,
\end{align} 
which leads to
\begin{align}\label{DCF}
	\|\xi^{(i)}\|_X<\frac{2-\kappa L}{4-\kappa L}\|\nabla J(x^{(i)})\|_X.
\end{align} 

%{\color{red}Next, note that if the iteration terminates then the result is true. Therefore, from now on, assume the iteration does not terminate, i.e., $ J(x^{(i)}) - J(x^\star) \geq \delta $ for all $ i $.}

Next, we show $ \|x^{(i)} - x^{\star}\|^2_X \leq \|x^{(0)} - x^{\star}\|^2_X $ for all $ i $ while $ J(x^{(i)}) - J(x^\star) \geq \delta $. Based on the inexact gradient iterations, we deduce
\begin{align}\label{CT}
    &\|x^{(i+1)}-x^{\star}\|_X^2-\|x^{(i)}-x^{\star}\|_X^2 \nonumber \\
    &=-\|x^{(i+1)}-x^{(i)}\|_X^2-2\kappa\left(\nabla J(x^{(i)})+\xi^{i},x^{(i+1)}-x^{\star}\right)_X \nonumber\\
    &=-\|x^{(i+1)}-x^{(i)}\|_X^2-2\kappa\left(\nabla J(x^{(i)}),x^{(i+1)}- x^{(i)}\right)_X \\
    &~~~~+2\kappa\left(\nabla J(x^{(i)}),x^{\star}-x^{(i)}\right)_X -2\kappa\left(\xi^{(i)},x^{(i+1)} - x^{(i)}\right)_X \nonumber \\
    &~~~~-2\kappa\left(\xi^{(i)},x^{(i)}-x^{\star}\right)_X.
		%&\leq -\|x^{(i+1)}-x^{(i)}\|_X^2+\kappa L\|x^{(i+1)}-x^{(i)}\|_X^2+2\kappa\left (J(x^{(i)})-J(x^{(i+1)})\right)-2\kappa\left(\xi^{(i)},x^{(i+1)}-x^\star\right).
\end{align}
Applying the L-descent inequality (\ref{Lcondition}) on $-2\kappa\left(\nabla J(x^{(i)}),x^{(i+1)}-x^{(i)}\right)$ gives
\begin{equation}\label{L1}
	\begin{split}
		&-2\kappa\left(\nabla J(x^{(i)}),x^{(i+1)}-x^{(i)}\right)_X\\&\leq 2\kappa\left( J(x^{(i)})-J(x^{(i+1)})\right)+\kappa L\|x^{(i+1)}-x^{(i)}\|_X^2.
	\end{split}
\end{equation}
Using the convexity inequality (\ref{covex}) in Lemma \ref{cov} on $2\kappa\left(\nabla J(x^{(i)}),x^{\star}-x^{(i)}\right)$ gives
\begin{align}\label{C1}
	2\kappa\left(\nabla J(x^{(i)}),x^{\star}-x^{(i)}\right)_X\leq 2\kappa\left(J(x^{\star})-J(x^{(i)})\right).
\end{align}
Using Cauchy-Schwartz and Young's inequalities, we obtain upper bounds for the terms $-2\kappa\left(\xi^{(i)},x^{(i+1)}-x^{(i)}\right)_X$ and  $-2\kappa\left(\xi^{(i)},x^{(i)}-x^{\star}\right)_X$ as follows:
\begin{equation}\label{K13}
	\begin{split}
		-2\kappa\left(\xi^{(i)},x^{(i+1)}-x^{(i)}\right)_X&\leq 2\kappa \|\xi^{(i)}\|_X\|x^{(i+1)}-x^{(i)}\|_X\\&\leq \eta\|x^{(i+1)}-x^{(i)}\|_X^2+\frac{\kappa^2\|\xi^{(i)}\|_X^2}{\eta},\\
		-2\kappa\left(\xi^{(i)},x^{(i)}-x^{\star}\right)_X&\leq 2\kappa \|\xi^{(i)}\|_X\|x^{(i)}-x^{\star}\|_X.
	\end{split} 
\end{equation}
Here, recall $ \eta = 1 - \kappa L $ is positive since $\kappa< 1/L$. Combining (\ref{CT}), (\ref{L1}), (\ref{C1}), and (\ref{K13}) leads to
\begin{equation}\label{K15}
	\begin{split}
		&\|x^{(i+1)}-x^{\star}\|_X^2-\|x^{(i)}-x^{\star}\|_X^2\\&\leq  
		-\left(\eta+\kappa L\right)\|x^{(i+1)}-x^{(i)}\|_X^2+\kappa L\|x^{(i+1)}-x^{(i)}\|_X^2+\eta\|x^{(i+1)}-x^{(i)}\|_X^2\\
		&~~~~+\frac{\kappa^2\|\xi^{(i)}\|_X^2}{\eta} +2\kappa\|\xi^{(i)}\|_X\|x^{(i)}-x^\star\|_X+2\kappa\left(J(x^{\star})-J(x^{(i)})\right)\\&~~~~+2\kappa \left(J(x^{(i)})-J(x^{(i+1)})\right) \\
		&\leq 2\kappa\left(\left(J(x^{\star})-J(x^{(i+1)})\right)+\|\xi^{(i)}\|_X\|x^{(i)}-x^\star\|_X+\frac{\kappa\|\xi^{(i)}\|_X^2}{2\eta}\right).
	\end{split} 
\end{equation}

Recall $ \delta \geq \|x^{(0)}-x^\star\|_X\epsilon+\frac{\kappa\epsilon^2}{2\eta}$, and $ \| \xi^{(0)} \|_X \leq \epsilon $. Since the iteration is not terminated, we have $ J(x^{(1)}) - J(x^{\star}) \geq \delta$ and therefore $\left(J(x^{\star})-J(x^{(1)})\right)+\|\xi^{(0)}\|_X\|x^{(0)}-x^\star\|_X+\frac{\kappa\|\xi^{(0)}\|_X^2}{2\eta} \leq 0$ and hence $\|x^{(1)}-x^{\star}\|_X^2\leq \|x^{(0)}-x^{\star}\|_X^2$. By induction, we have that the sequence $\{\|x^{(i)}-x^\star\|_X\}_{i\geq 0}$ is non-increasing for all $i$ satisfying  $J(x^{(i)}) - J(x^{\star})\geq  \delta$.

Third, we prove the bound (\ref{K0}) in Theorem \ref{thm4}. Define the Lyapunov energy function \cite{KB1960, ACW2016}: 
\begin{align}\label{energy}
	E_i=\frac{1}{2\kappa}\|x^{(i)}-x^{\star}\|_X^2+ i\left(J(x^{(i)})-J(x^\star)\right).%+\|\xi\|_X\|x^{(i+1)}-x^{\star}\|_X.
\end{align}
Since the objective function is decreasing under the condition (\ref{DCF}), we hence have with (\ref{K15})
\begin{equation*}\label{K2}
	\begin{split}
		&E_{i+1}-E_{i}=\frac{1}{2\kappa}\|x^{(i+1)}-x^{\star}\|_X^2+ (i+1)\left(J(x^{(i+1)})-J(x^\star)\right)\\&~~~~-\frac{1}{2\kappa}\|x^{(i)}-x^{\star}\|_X- i\left(J(x^{(i)})-J(x^\star)\right)\\
		&\leq i\left(J(x^{(i+1)})-J(x^{i})\right)%+J(x^{(i+1)})-J(x^\star)+J(x^{\star})-J(x^{(i+1)})\\&~~~
		+\|\xi^{(i)}\|_X\|x^{(i)}-x^\star\|_X+\frac{\kappa\|\xi^{(i)}\|_X^2}{2\eta}\\
		&\leq \|\xi^{(i)}\|_X\|x^{(i)}-x^\star\|_X+\frac{\kappa\|\xi^{(i)}\|_X^2}{2\eta}.
	\end{split} 
\end{equation*}
Summing the above inequality from $0$ to $k-1$ and using the definition of the energy in (\ref{energy}), we have 
\begin{equation}\label{K3}
	\begin{split}
		&k\left(J(x^{(k)})-J(x^\star)\right)-\frac{1}{2\kappa}\|x^{(0)}-x^{\star}\|_X^2\leq E_k-E_0\\&=\frac{1}{2\kappa}\|x^{(k)}-x^{\star}\|_X^2+ k\left(J(x^{(k)})-J(x^\star)\right)-\frac{1}{2\kappa}\|x^{(0)}-x^{\star}\|_X^2\\&\leq\sum_{i=0}^{k-1}\left(\|\xi^{(i)}\|_X\|x^{(i)}-x^\star\|_X+\frac{\kappa\|\xi^{(i)}\|_X^2}{2\eta}\right).
	\end{split} 
\end{equation}
Rearranging (\ref{K3}) and using $\|x^{(i)}-x^\star\|_X\leq \|x^{(0)}-x^\star\|_X $, 
we obtain 
\begin{equation}\label{K4}
	\begin{split}
		J(x^{(k)})-J(x^\star)&\leq \frac{1}{2k\kappa}\|x^{(0)}-x^{\star}\|_X^2+\sum_{i=0}^{k-1}\frac{\|\xi^{(i)}\|_X\|x^{(i)}-x^\star\|_X+\frac{\kappa\|\xi^{(i)}\|_X^2}{2\eta}}{k}\\
		&\leq \frac{1}{2k\kappa}\|x^{(0)}-x^{\star}\|_X^2+\|x^{(0)}-x^\star\|_X\epsilon+\frac{\kappa\epsilon^2}{2\eta}.
		%&\leq \frac{1}{2k\kappa}\|x^{(0)}-x^{\star}\|_X^2+\delta.
	\end{split} 
\end{equation}
This completes the proof. 
\end{proof}

\begin{remark}
Since $\|x^{(i)}-x^\star\|_X\leq \|x^{(0)}-x^\star\|_X$ is overestimated especially as $i$ increases, then we typically expect
\begin{align}
	\sum_{i=0}^{k-1}\frac{\|\xi^{(i)}\|_X\|x^{(i)}-x^\star\|_X+\frac{\kappa\|\xi^{(i)}\|_X^2}{2\eta}}{k}\ll  \delta.
\end{align}
\end{remark}

Actually, better convergence behavior can be proved for the inexact gradient method once stronger smoothness or convexity assumptions are provided on the objective function.
\begin{definition}
Let $X$ be a Hilbert space, and $\mu$ be a positive constant. A function $J : X \mapsto \mathbb{R}$ is said to be $\mu$-{\bf Polyak-Lojasiewicz} if it is bounded below and 
\begin{align}\label{PL}
	J(y)-\inf J(x)\leq \frac{1}{2\mu}\|\nabla J(x)\|_X^2\quad \forall x,y \in X. 
\end{align}
\end{definition}

\begin{definition}
Let $X$ be a Hilbert space, and $\mu$ be a positive constant. A function $J : X \mapsto \mathbb{R}$ is said to be $ \mu $-strongly convex if
\begin{align}\label{SC}
	J(y)\geq J(x)+\left\langle J'(x),y-x\right\rangle+\frac{\mu}{2}\|y-x\|_X^2\quad \forall x,y \in X. 
\end{align}
\end{definition}

\begin{theorem}\label{pl}
Let $X$ be a Hilbert space. For the inexact gradient method (\ref{ie4}), assume the objective function $J: X\mapsto \mathbb{R}$ % (\ref{discretecostChapter3}) 
is first order differentiable, $\mu$-{\bf Polyak-Lojasiewicz}, and $L$-descent with  minimizer $ x^{\star} \in X $. Denote $\theta=1-\mu(2\kappa-L\kappa^2)$.
%Let $X$ be a Hilbert space. Assume that objective function  $J: X \mapsto \mathbb{R}$ is  first order differentiable, $\mu$-{\bf Polyak-Lojasiewicz}, and $L$-descent. Let $x^{\star}$ be the minima of $J(x)$ and 
For $\kappa=1/L$, set $\epsilon$ so that the gradient error $\|\xi^{(i)}\|_X\leq \epsilon$ for all $i=0,1,2,\cdots,k-1$, then the inexact gradient method (\ref{ie4}) satisfies
\begin{align}\label{pl1}
	J(x^{(k)})-J(x^{\star})\leq \left(1-\frac{\mu}{L}\right)^{k}\left(J(x^{(0)})-J(x^{\star})\right)+\frac{1}{2\mu}\left(1-\left(1-\frac{\mu}{L}\right)^k\right)\epsilon^2.
\end{align}
For $0 < \kappa < 1/L$ and $0<\theta<1$, %let $\|\xi^{(i)}\|_X\leq\frac{2-\kappa L}{4-\kappa L}\|\nabla J(x^{(i)})\|_X$ and 
set $\epsilon$ so that $\|\xi^{(i)}\|_X\leq\epsilon<\frac{2-\kappa L}{4-\kappa L}\|\nabla J(x^{(i)})\|_X$ for all $i=0,1,2,\cdots,k-1$, then the inexact gradient method (\ref{ie4}) satisfies
\begin{align}\label{pl2}
	\begin{split}
		J(x^{(k)})-J(x^\star)&\leq\theta^{k}\left(J(x^{(0)})-J(x^{\star})\right)\\&+\frac{1-\theta^k}{1-\theta}\left(\sqrt{2L}|L\kappa^2-\kappa|\sqrt{J(x^{(0)})-J(x^{\star})}\epsilon+\frac{L\kappa^2}{2}\epsilon^2\right).
	\end{split}
\end{align}
\end{theorem}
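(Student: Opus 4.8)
The plan is to reduce one step of the iteration (\ref{ie4}) to a scalar recursion for $a_i := J(x^{(i)}) - J(x^\star)$ and then unroll it geometrically. First I would substitute $y = x^{(i+1)} = x^{(i)} - \kappa\big(\nabla J(x^{(i)}) + \xi^{(i)}\big)$ and $x = x^{(i)}$ into the $L$-descent inequality (\ref{Lcondition}); using $\langle J'(x^{(i)}),v\rangle = (\nabla J(x^{(i)}),v)_X$ and expanding the square $\|\nabla J(x^{(i)}) + \xi^{(i)}\|_X^2$ yields the one-step estimate
\begin{align*}
	J(x^{(i+1)}) &\le J(x^{(i)}) - \Big(\kappa - \tfrac{L\kappa^2}{2}\Big)\|\nabla J(x^{(i)})\|_X^2 \\
	&\quad + (L\kappa^2 - \kappa)\big(\nabla J(x^{(i)}),\xi^{(i)}\big)_X + \tfrac{L\kappa^2}{2}\|\xi^{(i)}\|_X^2 .
\end{align*}
Subtracting $J(x^\star)$ and invoking the $\mu$-Polyak--Lojasiewicz inequality (\ref{PL}) with $y = x = x^{(i)}$, i.e.\ $\|\nabla J(x^{(i)})\|_X^2 \ge 2\mu\,a_i$, converts the $-(\kappa - L\kappa^2/2)\|\nabla J(x^{(i)})\|_X^2$ term into $-\mu(2\kappa - L\kappa^2)\,a_i$, so the first two terms collapse to $\theta a_i$ with $\theta = 1 - \mu(2\kappa - L\kappa^2)$; note $\kappa - L\kappa^2/2 > 0$ for all $0 < \kappa \le 1/L$, so the direction of this inequality is as desired.

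For $\kappa = 1/L$ the cross-term coefficient $L\kappa^2 - \kappa$ vanishes, $\tfrac{L\kappa^2}{2} = \tfrac{1}{2L}$, and $\theta = 1 - \mu/L$, so with $\|\xi^{(i)}\|_X \le \epsilon$ I obtain the clean recursion $a_{i+1} \le (1 - \mu/L)\,a_i + \epsilon^2/(2L)$. Unrolling and using $\sum_{j=0}^{k-1}(1-\mu/L)^j = \tfrac{L}{\mu}\big(1 - (1-\mu/L)^k\big)$ gives exactly (\ref{pl1}). Here $\mu \le L$ — which guarantees $1 - \mu/L \in [0,1)$ — follows by combining the Polyak--Lojasiewicz inequality with Lemma~\ref{covv}.

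For $0 < \kappa < 1/L$ the cross term no longer disappears. Since $L\kappa^2 - \kappa < 0$, Cauchy--Schwarz bounds it by $|L\kappa^2 - \kappa|\,\|\nabla J(x^{(i)})\|_X\,\|\xi^{(i)}\|_X$. I would then use Lemma~\ref{covv} to write $\|\nabla J(x^{(i)})\|_X \le \sqrt{2L\,a_i}$, and this is where the hypothesis $\|\xi^{(i)}\|_X < \tfrac{2-\kappa L}{4-\kappa L}\|\nabla J(x^{(i)})\|_X$ enters: exactly as in the first part of the proof of Theorem~\ref{thm4} (inequalities (\ref{fd})--(\ref{DCF})), this smallness condition forces $J(x^{(i+1)}) < J(x^{(i)})$, hence $\{a_i\}$ is non-increasing and $a_i \le a_0$, so $\|\nabla J(x^{(i)})\|_X \le \sqrt{2L\,a_0}$. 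Together with $\|\xi^{(i)}\|_X \le \epsilon$ the one-step estimate becomes $a_{i+1} \le \theta a_i + C$ with $C = \sqrt{2L}\,|L\kappa^2 - \kappa|\sqrt{a_0}\,\epsilon + \tfrac{L\kappa^2}{2}\epsilon^2$; unrolling with $0 < \theta < 1$ and $\sum_{j=0}^{k-1}\theta^j = \tfrac{1-\theta^k}{1-\theta}$ produces (\ref{pl2}). (Taking $\kappa = 1/L$ here recovers (\ref{pl1}), which explains why the two cases agree in the overlap.)

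The main obstacle is handling the cross term $(L\kappa^2 - \kappa)\big(\nabla J(x^{(i)}),\xi^{(i)}\big)_X$ when $\kappa \neq 1/L$: it cannot be absorbed into the negative quadratic $-(\kappa - L\kappa^2/2)\|\nabla J(x^{(i)})\|_X^2$ in a way that leaves a purely geometric recursion, so one must both bound $\|\nabla J(x^{(i)})\|_X$ \emph{from above} via Lemma~\ref{covv} and guarantee the monotonicity of $\{a_i\}$ via the auxiliary smallness condition on $\xi^{(i)}$ and the descent argument of Theorem~\ref{thm4}; the bookkeeping needed to confirm that $0 < \theta < 1$ is consistent with the stated hypotheses on $\kappa$, $\mu$, and $L$ is a minor additional step.
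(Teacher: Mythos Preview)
Your proposal is correct and follows essentially the same route as the paper: the one-step $L$-descent estimate (\ref{Lcondition}) expanded along the update (\ref{ie4}), the PL inequality to replace $\|\nabla J(x^{(i)})\|_X^2$ by $2\mu a_i$, the case split at $\kappa=1/L$ where the cross term vanishes, and for $0<\kappa<1/L$ the combination of Cauchy--Schwarz, Lemma~\ref{covv}, and the monotonicity $a_i\le a_0$ inherited from Theorem~\ref{thm4}, followed by geometric unrolling. Your remark that $\mu\le L$ follows from pairing (\ref{PL}) with Lemma~\ref{covv} is a helpful addition the paper leaves implicit.
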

\begin{proof}
See Appendix \ref{AP1}.
\end{proof}

\vspace{2mm}
\begin{theorem}\label{sc}
Let $X$ be a Hilbert space. For the inexact gradient method (\ref{ie4}), assume the objective function $J: X\mapsto \mathbb{R}$ % (\ref{discretecostChapter3}) 
is first order differentiable, $\mu$-strongly convex, and $L$-descent with  minimizer $ x^{\star} \in X $. Denote $\theta=1-\mu\kappa$ and $\eta=2\kappa-2\kappa^2L$, and suppose the constant gradient descent step size $ \kappa $ satisfies $ 0 < \kappa < 1/L$. For a given iteration termination tolerance constant $\delta $, set $\epsilon$ so that  $\|\xi^{(i)}\|_X \leq \epsilon $ for all $ i =0,1,2,\cdots,k-1$ and $ \delta \geq \frac{\sqrt{2L\eta}+\sqrt{2L\eta+2L\eta\kappa\mu +\kappa\mu}}{\mu\sqrt{2L\eta}}\epsilon > 0$, then the inexact gradient method (\ref{ie4}) satisfies

\begin{align}\label{sc1}
	\begin{split}
		% \|x^{(k)}-x^\star\|_X\leq \theta^{k} \|x^{(0)}-x^\star\|_X+ \frac{1-\theta^{k}}{1-\theta}\|x^{(0)}-x^\star\|_X\kappa\eps.
		\|x^{(k)}-x^\star\|_x^2&\leq \theta^k\|x^{(0)}-x^\star\|_x^2+\frac{1-\theta^k}{1-\theta}\left(2\kappa\|x^{(0)}-x^\star\|_X\epsilon+\frac{\kappa^2 }{2L\eta}\epsilon^2+\kappa^2\epsilon^2\right).
	\end{split}
\end{align}
\end{theorem}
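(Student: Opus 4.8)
The plan is to reduce the whole statement to a single linear recursion $\|x^{(i+1)}-x^\star\|_X^2\le\theta\,\|x^{(i)}-x^\star\|_X^2+R$ with $\theta=1-\mu\kappa\in(0,1)$ and $R$ independent of $i$, then unroll it over $i=0,\dots,k-1$ and sum the geometric series $\sum_{j=0}^{k-1}\theta^j=\frac{1-\theta^k}{1-\theta}$ to obtain (\ref{sc1}). First I would expand, using the update rule (\ref{ie4}),
\[
\|x^{(i+1)}-x^\star\|_X^2=\|x^{(i)}-x^\star\|_X^2-2\kappa\bigl(\nabla J(x^{(i)})+\xi^{(i)},\,x^{(i)}-x^\star\bigr)_X+\kappa^2\|\nabla J(x^{(i)})+\xi^{(i)}\|_X^2.
\]
Applying the $\mu$-strong convexity inequality (\ref{SC}) with $x=x^{(i)}$ and $y=x^\star$ to $-2\kappa(\nabla J(x^{(i)}),x^{(i)}-x^\star)_X$ produces the contraction term $-\mu\kappa\|x^{(i)}-x^\star\|_X^2$ together with $-2\kappa\bigl(J(x^{(i)})-J(x^\star)\bigr)$. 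Expanding $\|\nabla J(x^{(i)})+\xi^{(i)}\|_X^2=\|\nabla J(x^{(i)})\|_X^2+2(\nabla J(x^{(i)}),\xi^{(i)})_X+\|\xi^{(i)}\|_X^2$ and bounding the first piece with Lemma \ref{covv}, which gives $\|\nabla J(x^{(i)})\|_X^2\le 2L\bigl(J(x^{(i)})-J(x^\star)\bigr)$, the two function-gap contributions merge into a single negative term $-\eta\bigl(J(x^{(i)})-J(x^\star)\bigr)$, where $\eta=2\kappa-2\kappa^2L>0$ because $\kappa<1/L$.

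Next I would estimate the two error cross terms. Cauchy--Schwarz and $\|\xi^{(i)}\|_X\le\epsilon$ give $-2\kappa(\xi^{(i)},x^{(i)}-x^\star)_X\le 2\kappa\epsilon\|x^{(i)}-x^\star\|_X$. The remaining inner product $2\kappa^2(\nabla J(x^{(i)}),\xi^{(i)})_X$ is treated by Young's inequality with a weight chosen so that its $\|\nabla J(x^{(i)})\|_X^2$ part is absorbed by the leftover $-\eta\bigl(J(x^{(i)})-J(x^\star)\bigr)\le-\frac{\eta}{2L}\|\nabla J(x^{(i)})\|_X^2$ (Lemma \ref{covv} again), leaving a residual proportional to $\frac{\kappa^2}{2L\eta}\epsilon^2$; the bare term $\kappa^2\|\xi^{(i)}\|_X^2\le\kappa^2\epsilon^2$ is kept unchanged. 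Collecting everything yields the one-step estimate
\[
\|x^{(i+1)}-x^\star\|_X^2\le(1-\mu\kappa)\|x^{(i)}-x^\star\|_X^2+2\kappa\epsilon\|x^{(i)}-x^\star\|_X+\Bigl(\tfrac{\kappa^2}{2L\eta}+\kappa^2\Bigr)\epsilon^2.
\]

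Then I would linearize this recursion. Following the argument in the proof of Theorem \ref{thm4}, I would show by induction that $\{\|x^{(i)}-x^\star\|_X\}$ is non-increasing (hence bounded by $\|x^{(0)}-x^\star\|_X$) for every $i$ before the iteration terminates: the right-hand side of the one-step estimate is no larger than $\|x^{(i)}-x^\star\|_X^2$ precisely when $\|x^{(i)}-x^\star\|_X$ is at least the positive root of $\mu\kappa\,t^2-2\kappa\epsilon\,t-\bigl(\tfrac{\kappa^2}{2L\eta}+\kappa^2\bigr)\epsilon^2$, and a direct computation shows that root equals $\frac{\sqrt{2L\eta}+\sqrt{2L\eta+2L\eta\kappa\mu+\kappa\mu}}{\mu\sqrt{2L\eta}}\,\epsilon$, which is exactly the hypothesized lower bound on $\delta$. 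Replacing $\|x^{(i)}-x^\star\|_X$ by $\|x^{(0)}-x^\star\|_X$ inside the error term makes it the constant $R=2\kappa\|x^{(0)}-x^\star\|_X\epsilon+\tfrac{\kappa^2}{2L\eta}\epsilon^2+\kappa^2\epsilon^2$, and unrolling the now-linear recursion from $i=0$ to $k-1$ gives $\|x^{(k)}-x^\star\|_X^2\le\theta^k\|x^{(0)}-x^\star\|_X^2+\frac{1-\theta^k}{1-\theta}R$, which is (\ref{sc1}).

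The main obstacle is the coupled bookkeeping in the two middle steps: choosing the Young's-inequality weights so that every $\|\nabla J(x^{(i)})\|_X^2$ contribution is absorbed by the single available negative quantity $-\eta\bigl(J(x^{(i)})-J(x^\star)\bigr)$ while generating exactly the constants $\tfrac{\kappa^2}{2L\eta}$ and $\kappa^2$, and then verifying that the positive root of the resulting quadratic coincides with the stated threshold on $\delta$ so that the monotonicity induction closes. Everything after the one-step inequality, i.e., the geometric-series summation, is routine.
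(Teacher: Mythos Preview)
Your proposal is correct and follows essentially the same route as the paper: expand $\|x^{(i+1)}-x^\star\|_X^2$, use strong convexity (\ref{SC}) to extract the $-\mu\kappa\|x^{(i)}-x^\star\|_X^2$ contraction and a $-2\kappa(J(x^{(i)})-J(x^\star))$ term, use Lemma~\ref{covv} on $\kappa^2\|\nabla J(x^{(i)})\|_X^2$ so the function-gap terms cancel via $\eta=2\kappa-2\kappa^2L$, treat the two $\xi^{(i)}$ cross terms by Cauchy--Schwarz and a weighted Young inequality, then show $\{\|x^{(i)}-x^\star\|_X\}$ is non-increasing before termination by checking that $\delta$ equals the positive root of the resulting quadratic, and finally unroll and sum the geometric series. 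The paper's Appendix~\ref{AP2} proceeds in exactly this order with the same ingredients.
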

\begin{proof}
See Appendix \ref{AP2}.
\end{proof}
\iffalse
\vspace{2mm}
\begin{theorem}\label{scd}
Let $X$ be a Hilbert space.   Assume that the objective function $J(x): X \mapsto R$ is $\mu$-strongly convex and twice differentiable with $\|\nabla^2 J(x)\|\leq L$. 
If the gradient error  $\|\xi^{(i)}\|_X\leq \epsilon$ for all $i$ and $\theta=\max\{|1-\mu\kappa|, |1-L\kappa|\}<1$, then inexact gradient method (\ref{ie4}) satisfies
\begin{align}\label{scd1}
	\|x^{(k)}-x^\star\|_X\leq \theta^{k} \|x^{(0)}-x^\star\|_X+ \frac{1-\theta^{k}}{1-\theta}\kappa\epsilon.
\end{align}
\end{theorem}
\begin{proof}
See Appendix \ref{AP3}.
\end{proof}
\fi	

The first term on the right hand side of inequalities (\ref{K0}), (\ref{pl1}), (\ref{pl2}), and (\ref{sc1}) gives the classical convergence rate of the usual gradient method, while the second term involving $\epsilon$ are from the inexact gradient descent iterations.   According to a given iteration termination tolerance $\delta$, Theorems \ref{thm4}, \ref{pl}, and \ref{sc} conclude that, if the iPOD parameters are set up properly so that the gradient error $\epsilon$ is controlled and the second right-hand side term in the aforementioned inequalities is approximately the same magnitude as $\delta$, the inexact gradient method is expected to achieve the same level accuracy of the optimal solution while using similar number of iterations compared to the usual gradient method. Also, note that the iPOD data compression algorithm only involves small scale matrix and vector multiplications, which is computationally cheap. Hence, the inexact gradient method overall will not largely increase the computation time.

\section{Numerical tests}\label{NE}

\subsection{Linear PDE-constraint optimization test: Data assimilation for a parabolic interface equation}\label{pbda}
The parabolic interface equations are given by %model many physics and engineering problems when two or more distinct materials or fluids with different conductivities or diffusions are involved, which is given by 
\begin{align}\label{pb1}
&u_{t}-\nabla \cdot (\beta(x,y)\nabla u)=f~ \text{in} ~\Omega\times(0,T];~u=0~ \text{on}~\partial\Omega\times(0,T];~ u(\cdot,0)=u_{0}~\text{in}~\Omega,
\end{align}
together with jump interface condition
$
[u]|_{\Gamma}=g_1,~
[\beta(x,y)\frac{\partial{u}}{\partial\vec{n}}]|_{\Gamma}=g_2.
$
Here, $\Gamma$ is a smooth curve interface that separates an open bounded domain $\Omega$ into two subdomains $\Omega^{+}$ and $\Omega^{-}$ such that $\Omega=\Omega^{+}\cup\Omega^{-}\cup\Gamma$, $[u]|_{\Gamma}=u^{+}|_{\Gamma}-u^{-}|_{\Gamma}$ is the jump of the function $u$ across the interface $\Gamma$, $u^{+}=u|_{\Omega^{+}}$ and $u^{-}=u|_{\Omega^{-}}$,
$\vec{n}$ is the unit normal vector along the interface $\Gamma$ pointing to $\Omega^{-}$, $\frac{\partial{u}}{\partial\vec{n}}$ is the normal derivative of $u$, and $\beta(x,y)$ is assumed to be a positive piecewise function
\begin{align*}
\beta(x,y)& =
\begin{cases}
	\beta^{+}(x,y) & \text{if } (x,y)\in \Omega^{+},\\
	\beta^{-}(x,y)& \text{if } (x,y)\in \Omega^{-}.
\end{cases}
\end{align*}
In the following experiment, we set the model parameters as 
$\Omega^{+}=(0,1)\times(0,1)$, $\Omega^{-}=(1,2)\times(0,1)$, $T=1$,
$\Gamma: x=1$, $g_1=g_2=0$, $\beta^{+}=1$, $\beta^{-}=\frac{1}{2}$, and 
\begin{align*}
f=\begin{cases}
	f^{+}=ty+x^{\frac{1}{2}}+5\quad \text{in~~}\Omega^+\times (0,T],\\
	f^{-}=tx+(xy)^{\frac{1}{2}}+6\quad \text{in~~}\Omega^-\times (0,T].
\end{cases}
\end{align*}
In discrete level, we use the backward Euler scheme with $\tau=\frac{1}{500}$ for the temporal discretization and the P2 confirming finite element space $V_h$ with uniform mesh $h=\frac{1}{50}$ for the spatial discretization.

The data assimilation problem is stated as
\begin{eqnarray}\label{dp1}
\min_{u_{0,h}\in V_h}J_{}(u_{0,h})=\frac{1}{2}\tau \sum_{j=1}^{n}\|\hat u^{j}-u_{h}^{j}\|^{2}_{L^2(\Omega)}+\frac{\gamma}{2}\|u_{0,h}\|^{2}_{L^2(\Omega)},
\end{eqnarray}
subject to 
\begin{align}\label{dc1}
\langle \frac{u^{j+1}_h-u^{j}_h}{\tau},v_h\rangle+a( u^{j+1}_h,v_h)=\langle f^{j+1},v_h\rangle\quad \forall v_h\in V_h;\quad u^0_h=u_{0,h}\quad \text{in}~L^2(\Omega),
\end{align}
for all $j=0,1,\cdots,n$ and $n=\frac{T}{\tau}$. Here, the bilinear form $a(w,v)$ is defined by $a(w,v)=\int_{\Omega^+}\beta^+w^{+}vdxdy+\int_{\Omega^-}\beta^-w^{-}vdxdy\quad \forall w,v\in V_h$. According to problem (\ref{discretecostChapter3})-(\ref{discreteoptChapter3}), the operator $F(\cdot)$ here is $0$ and the operator $A$ is induced by $\langle Aw,v\rangle=a(w,v)\quad \forall w,v\in V_h$. It is not difficult to verify that $A=A^*$ and  $\langle A^*v,v\rangle +C_1\|v\|^2_{L^2(\Omega)}\geq C_2\|v\|_{H^1(\Omega)}^2$.

We choose the observations $\{\hat u^{j}\}$ in (\ref{dp1}) as a noisy numerical solution. Specifically, we construct $\{\hat u^{j}\}$ by adding normal noise $e^j_{ob} \sim \mathcal{N}\left(0,\,\left(\frac{1}{20}I\right)^2\right),$ $j=1,\cdots,N$ onto a numerical solution $\{u_h^j\}$, where the numerical solution is generated by solving the regular parabolic interface equation (\ref{dc1}) with the specific initial data 
$u^{+}_{0,h}=\sqrt{xy(2-x)(1-y)} $ in $\Omega^+$ and $
u^{-}_{0,h}=\sqrt{xy(2-x)(1-y)} $ in $\Omega^-$.

We use the gradient Algorithm \ref{SD} and inexact gradient Algorithm \ref{iPODSD}, respectively, to solve for the optimal solution $u_{0,h}^{\star}$ and the corresponding $\{u^{j\star}_h\}$ for problem (\ref{dp1})-(\ref{dc1}), and test the effectiveness of the inexact gradient method for saving data storage. We set the parameter $\gamma=\frac{1}{2000}$, the step size $\kappa=1$, and the steepest descent termination criteria $\|\nabla _{}J(u_{0,h}^{(k)})\|_{L^(\Omega)}\leq \texttt{tol}_{sd}=10^{-5}$. Also, since $A$ is linear and differentiable and the $L^2(\Omega)$ norm is convex and differentiable, the problem (\ref{dp1})-(\ref{dc1}) is a linear-quadratic optimization that has enough smoothness and strong convexity. Based on Theorems \ref{thm0}, \ref{thm1} and \ref{sc}, to avoid affecting the solution accuracy and the convergence speed from inexact gradient method, we should apply the iPOD with respect to $M$ (that is the mass matrix induced by $L^2$ norm) and the SVD truncation threshold $\texttt{tol}_p$ and $\texttt{tol}_{sv}$ as $10^{-8}$, since we want to have $n \texttt{tol}_p<10^{-5}$ and $n \texttt{tol}_{sv}<10^{-5}$ so that the data compression error and gradient error are less than $\texttt{tol}_{sd}=10^{-5}$.

\begin{figure}%[H]
\centering
\centerline{\includegraphics[height=4.5cm,width=4.2cm]{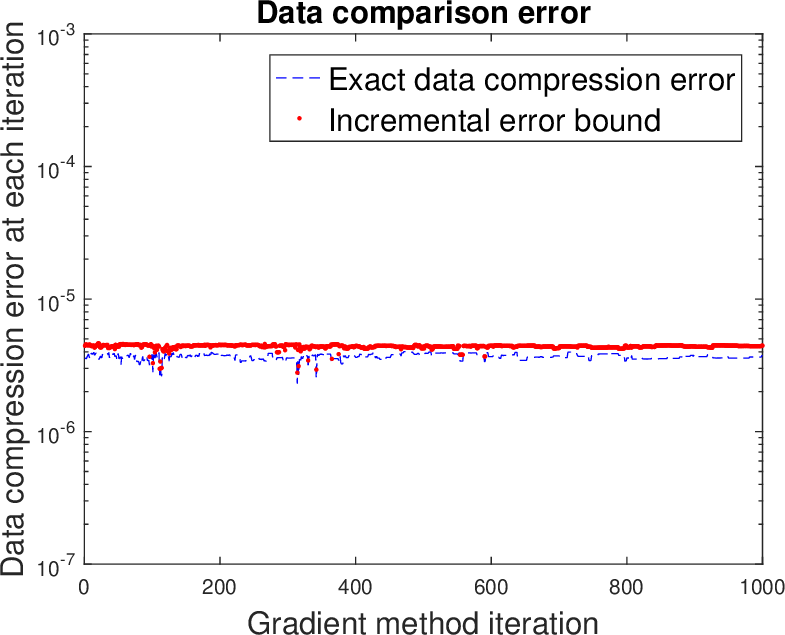}\includegraphics[height=4.5cm,width=4.2cm]{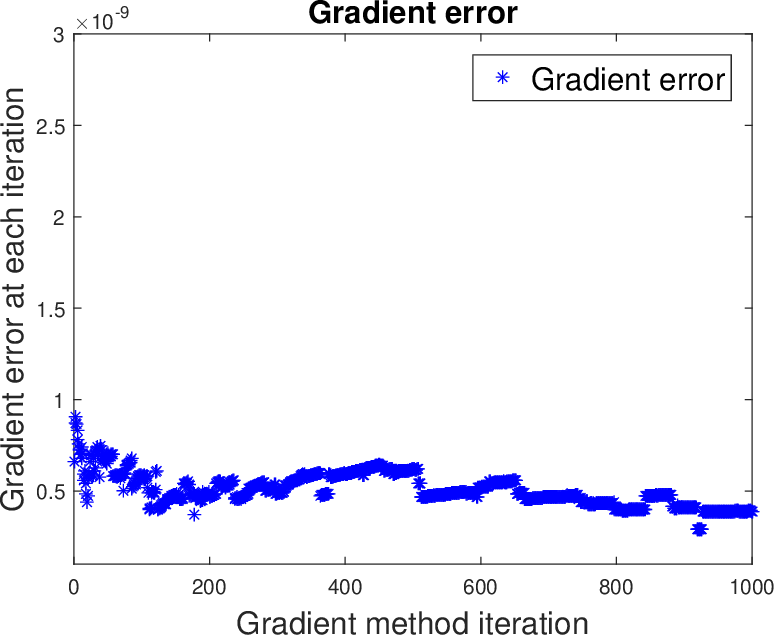}\includegraphics[height=4.5cm,width=4.2cm]{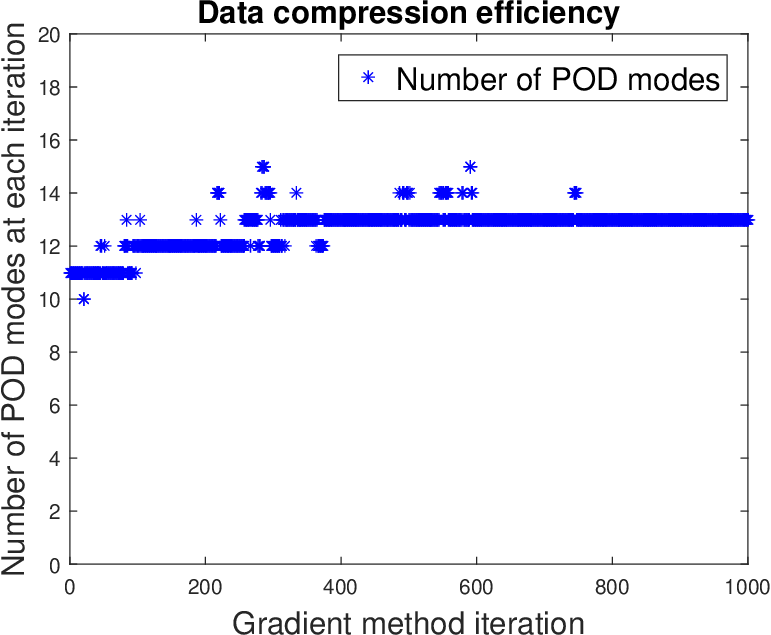}}
\caption{Linear PDE test 1: Data compression performance}\label{NC2}
\end{figure}

\begin{table}%[H]
\centering
\caption{Linear PDE test 1: Comparison between the usual gradient method and the inexact gradient method.  $\{u^j_h\}$: exact solution; $\{u^{j\star}_h\}$: optimal solution; Relative error: $\|u_h-u_h^{\star}\|_{\tilde{L}^2}=\sqrt{\sum_{j=1}^n\tau\|u_h^j-u_h^{j\star}\|_{L^2(\Omega)}^2/\|u_h^j\|^2_{L^2(\Omega)}}$.}
\begin{tabular}{c|c|c}
	\hline
	%\multicolumn{3}{c}{Error Comparison to Observations}\\ \cline{1-3}
	&  usual gradient method & inexact gradient method\\
	\hline
	number of iteration	& 1001&1001 \\ \hline
	data storage	& $10201\times500$& $\approx 10201\times 13$  \\ \hline
	%	memory saved	 &0&$96.0\%$\\ \hline
	$\|u_h-u_h^{\star}\|_{\tilde{L}^2}$	& $5.2306\times 10^{-3}$&$5.2306\times 10^{-3}$ \\\hline
\end{tabular}
\end{table}\label{parabolic}

In  figure \ref{NC2}, the first plot shows that the error (red dash line) calculated with Theorem \ref{thm0} is less than $10^{-5}$, and the exact data compression error (blue dash line) is sharply bounded by the red line, which verifies that Theorem \ref{thm0} provides an accurate error estimate for iPOD Algorithm; the second plot shows that the gradient error at each iteration is strictly less than $10^{-5}$ %(the convergence tolerance $\texttt{tol}_{sd}$ and the gradient error $\|\xi^i\|_{L^2(\Omega)}$) 
and thus well-controlled as expected, this verifies Theorem \ref{thm1}; the third plot shows that the inexact gradient method only uses $\frac{13}{500}\times 100 $ ($\approx 2.6$) percent storage of the usual gradient method. Table \ref{parabolic} displays that the inexact gradient method iterates the same number of steps and achieves the same accuracy of the optimal solution compared to the usual gradient method. All these prove the memory efficiency of the inexact gradient method, and also confirms the conclusions in Section \ref{SDA} (Theorems \ref{thm1} and \ref{sc}) that, once the iPOD data compression error and gradient error are properly controlled, the inexact gradient method will have minimal impact on the convergence speed and the solution accuracy.
\begin{figure}%[H]
\centering
{\includegraphics[height=4.5cm,width=4.2cm]{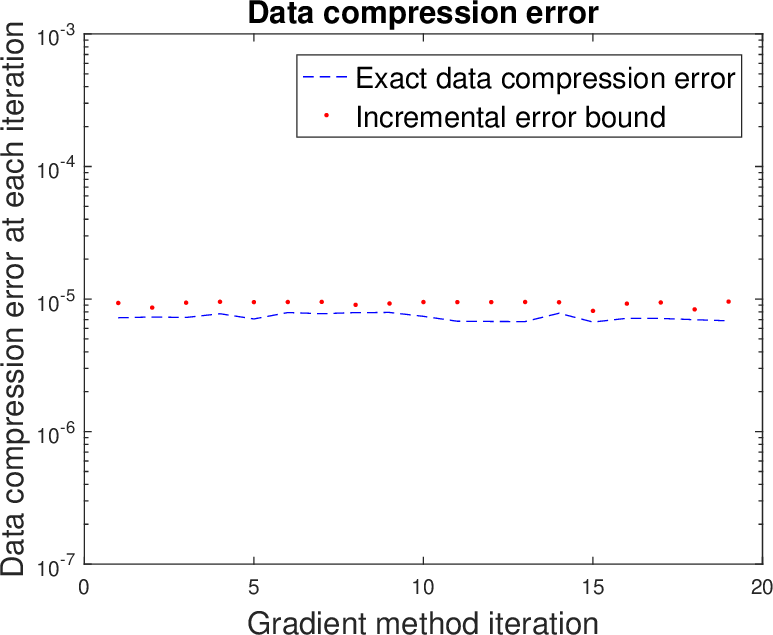}\includegraphics[height=4.5cm,width=4.2cm]{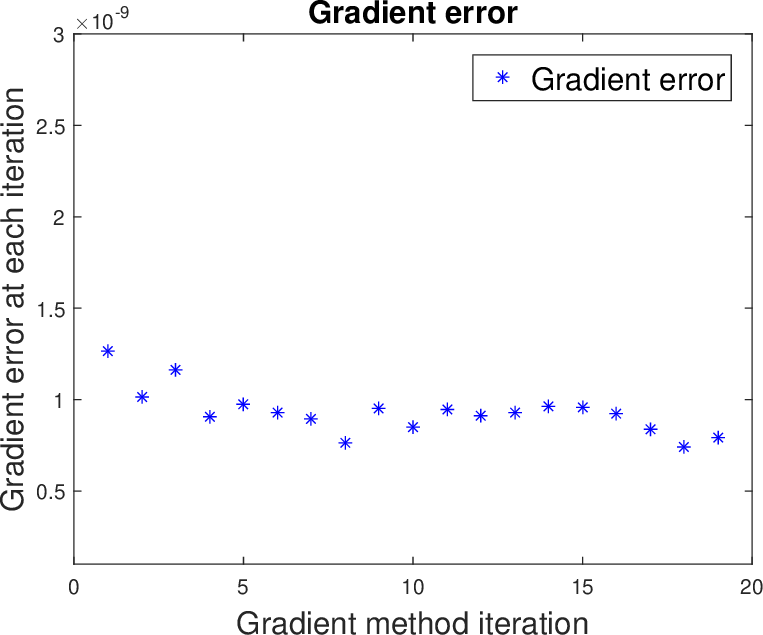}\includegraphics[height=4.5cm,width=4.2cm]{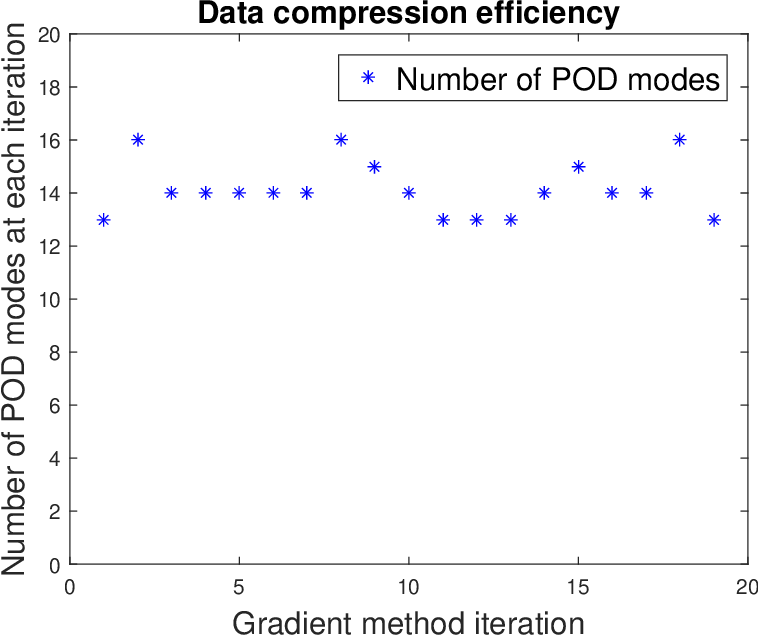}}
\caption{Linear PDE test 2: Data compression performance }\label{NC3}
\end{figure}

Moreover, to further test the memory efficiency of the inexact gradient method and the analysis results from Section \ref{SDA}, we run the optimization problem (\ref{dp1})-(\ref{dc1}) in a larger scale for a few gradient descent iterations and observe the behaviors. We set $h=\frac{1}{50}$ and $\tau=\frac{1}{2000}$ (with data size: $20402\times 2000$) and $\|\nabla _{}J(u_{0,h}^{(k)})\|_{L^(\Omega)}\leq \texttt{tol}_{sd}=10^{-5}$.   Accordingly, we set $\texttt{tol}_p=\texttt{tol}_{sv}=5\times10^{-9}$ to control the data compression error and gradient error. The other parameters are set the same as above. The first two plots in figure \ref{NC3} show the iPOD data compression error and gradient error are fully controlled as expected by $\texttt{tol}_p$ and $\texttt{tol}_{sv}$. The third plot in Figure \ref{NC3} shows that only $\frac{15}{2000}\times 100$ ($\approx 0.75$) percent data storage is used compared to usual gradient method. Since the gradient error at each iteration is strictly less than $10^{-5}$, we can expect that with Theorems \ref{sc}, the inexact gradient method will not  impact the convergence speed and solution accuracy. 

\subsection{Nonlinear PDE-constraint optimization test: Data assimilation for NSE}\label{nda}
We consider a 2D NSE in time-space domain $[0,1]\times\{[0,1]\times[-1,0]\}$ with samll viscosity $\nu=\frac{1}{1000}$. We use the backward Euler scheme with $\tau=\frac{1}{500}$ to discretize time and the P2 FE space $V_h$ and the P1 FE space $Q_h$ with uniform mesh $h=\frac{1}{20}$ to discretize the velocity and pressure.

The data assimilation problem is stated as: 
\begin{align}\label{NS1}
\min_{u_{0,h}\in V_h}J_{}(u_{0,h})=\frac{1}{2}\tau \sum_{j=1}^{n}\|\hat u^{j}-u_{h}^{j}\|^{2}_{L^2(\Omega)}+\frac{\gamma}{2}\|u_{0,h}\|^{2}_{L^2(\Omega)}\quad\text{subject to}\quad (\ref{NSeqnfor}).
\end{align}
\iffalse
subject to 
\begin{equation}\label{dc2}
\left\{\begin{aligned}
	&\left \langle\frac{ {u}_{h}^{j+1}- {u}_{h}^{j}}{\tau},v_h\right\rangle+ a\left({u}_{h}^{j+1},v_h\right)+b\left({u}_{h}^{j+1},{u}_{h}^{j+1},v_h\right)+(p^{n+1}_h,\nabla\cdot v_h)\\&=\left\langle{f}^{n+1},v_h\right\rangle\quad\forall v_h\in V_h,\\
	&(\nabla\cdot{u}_{h}^{j+1},q_h)=0\quad\forall q_h \in Q_h,\\
	& {u}_{h}^{0}= u_{0,h}^{}\quad \text{in} ~~L^2(\Omega).
\end{aligned}\right.
\end{equation}
for all $j=0,1,\cdots,n$ and $n=\frac{T}{\tau}$.
\fi
The observations $\{\hat u^{j}\}$ in (\ref{NS1}) are given by adding normal noise $e^n_{ob} \sim \mathcal{N}\left(0,\,\left(\frac{1}{20}I\right)^2\right),$ $j=1,2,3,\cdot\cdot\cdot, N$ onto a numerical solution $\{ u^{j}_h\}$, where the numerical solution is generated by running the regular NSE (\ref{NSeqnfor}) with initial condition $u_{0}=\begin{pmatrix}u_{1,0}&u_{2,0}\end{pmatrix}^T$. Here, $u_{1,0}=5xy(1-x)(1+y)$ and $u_{2,0}=4xy(1-x)(1+y)$.

%For the gradient methods, we set $\gamma=\frac{1}{1000}$, $\kappa=1$, and $\texttt{tol}_{sd}= 10^{-4}$. 
For the inexact gradient method, based on Theorem \ref{thm2}, we should apply the iPOD with respect to the matrix $M$ that is induced by the $H^1$ norm to control the gradient error. Although the objective function (\ref{NS1}) is smooth, there is not a guarantee that (\ref{NS1}) is strongly convex due to the nonlinear constraint. We thus can only assume it is convex and L-descent, which corresponds the case of Theorem \ref{thm4}. If we set the steepest descent termination criteria as $\texttt{tol}_{sd}=|J(u_{0,h}^{k+1}-J(u_{0,h}^{k})|\leq10^{-8} $, to avoid hurting the convergence speed and solution accuracy, we need to apply truncation thresholds $\texttt{tol}_p=\texttt{tol}_{sv}=2\times10^{-11}$ in iPOD so that the data compression error ($n\texttt{tol}_p$ and $n\texttt{tol}_{sv}$) and gradient error do not exceed magnitude $10^{-8}$. We also set the parameters $\gamma=\frac{1}{1000}$ and $\kappa=1$.
%Different from the test in Section \ref{pbda}, based on Theorem \ref{thm2}, we should apply the iPOD Algorithm with respect to matrix $M$ that is induced by $H^1$ norm instead of $L^2$ norm to control the gradient error and avoid ill-effect on the accuracy and convergence speed from inexact gradient method.

\begin{figure}%[H]
\centering
{\includegraphics[height=4.5cm,width=4.2cm]{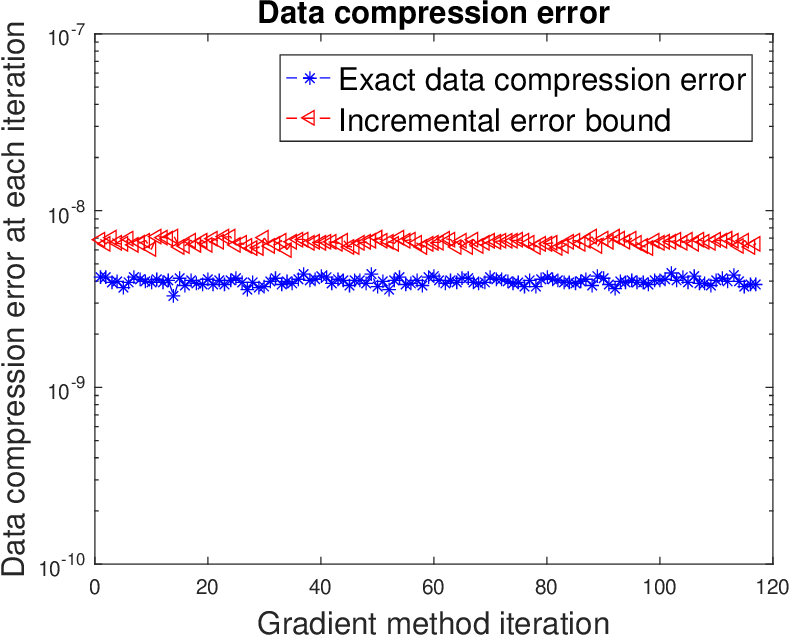}\includegraphics[height=4.5cm,width=4.2cm]{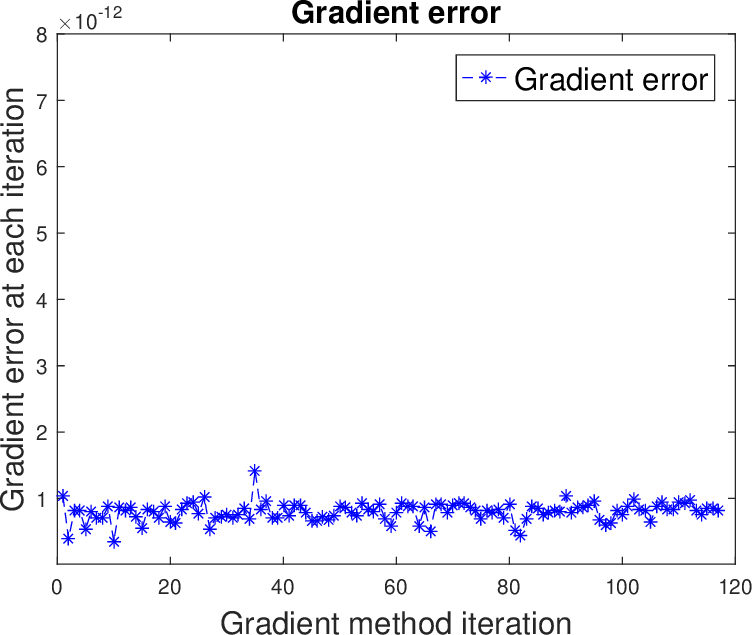}\includegraphics[height=4.5cm,width=4.2cm]{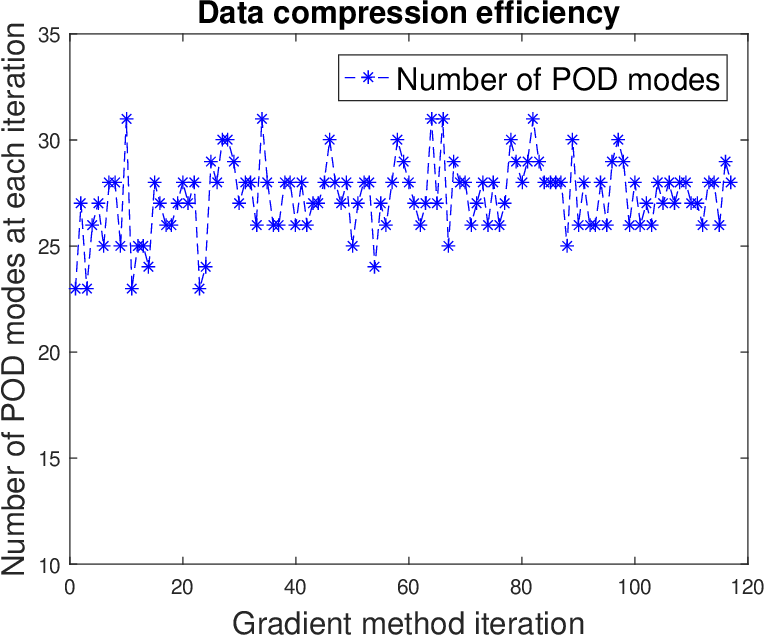}}
\caption{Nonlinear NSE test 1: Data compression performance }\label{NST2}
\end{figure}

\begin{table}%[H]
\centering
\caption{Nonlinear NSE test 1: Comparison between the usual gradient method and the inexact gradient method.  $\{u^j_h\}$: exact solution; $\{u^{j\star}_h\}$: optimal solution; Relative error: $\|u_h-u_h^{\star}\|_{\tilde{L}^2}=\sqrt{\sum_{j=1}^n\tau\|u_h^j-u_h^{j\star}\|^2_{L^2(\Omega)}/\|u_h^j\|^2_{L^2(\Omega)}}$. %The memory saved is calculated by $1-\frac{data~column~needed}{501}\cdot 100\%$.
}\label{tablePODST22}
\begin{tabular}{c|c|c}
	\hline
	%\multicolumn{3}{c}{Error Comparison to Observations}\\ \cline{1-3}
	&  usual gradient method & inexact gradient method\\
	\hline
	number of iteration	& 117&117 \\ \hline
	data storage	& $3362\times 500$& $\approx 3362\times 27$  \\ \hline
	%	memory saved	 &0&$96.0\%$\\ \hline
	$\|u_h-u_h^{\star}\|_{\tilde{L}^2}$	& $7.14467
	\times 10^{-3}$&$7.14467\times 10^{-3}$ \\\hline
\end{tabular}
\end{table}\label{NSET}

In figure \ref{NST2}, the first plot again shows the exact data compression error is sharply bounded by the iPOD error estimate that is calculated with Theorem \ref{thm0}; the second plot shows that the gradient error is strictly less than $10^{-8}$ at each iteration as expected, which can always be well bounded by the controllable iPOD data compression error; the third plot shows that the inexact gradient method uses around $\frac{27}{500}\times 100$ ($\approx 5.4$) percent storage of the usual gradient method. In addition, Table \ref{NSET} displays that the inexact gradient method achieves the same solution accuracy and convergence speed as the usual gradient method but with much less data storage.

Next, by setting $h=\frac{1}{25}, \tau=\frac{1}{1000}$,  $\texttt{tol}_p=\texttt{tol}_{sv}=10^{-11}$, $\texttt{tol}_{sd}=10^{-8}$, and  other parameters the same as above, we run the inexact gradient method for a few iterations in a larger scale with data size $5202\times 1000$ to further test its memory efficiency. Figure \ref{NST3} shows that the data compression and gradient errors are bounded as desired so that it has minimal impact on solution accuracy and convergence speed. Also, the storage used in the inexact gradient method is around $\frac{32}{1000}\times 100$ ($\approx 3.2$) percent of the usual gradient method. All these tests confirm that the inexact gradient method with iPOD is memory efficient and robust. 

\begin{figure}%[H]
\centering
{\includegraphics[height=4.5cm,width=4cm]{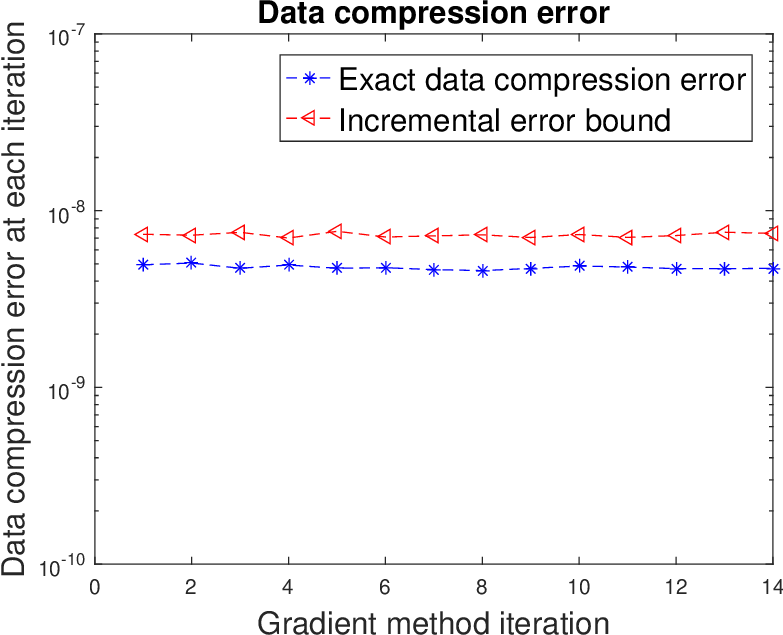}\includegraphics[height=4.5cm,width=4cm]{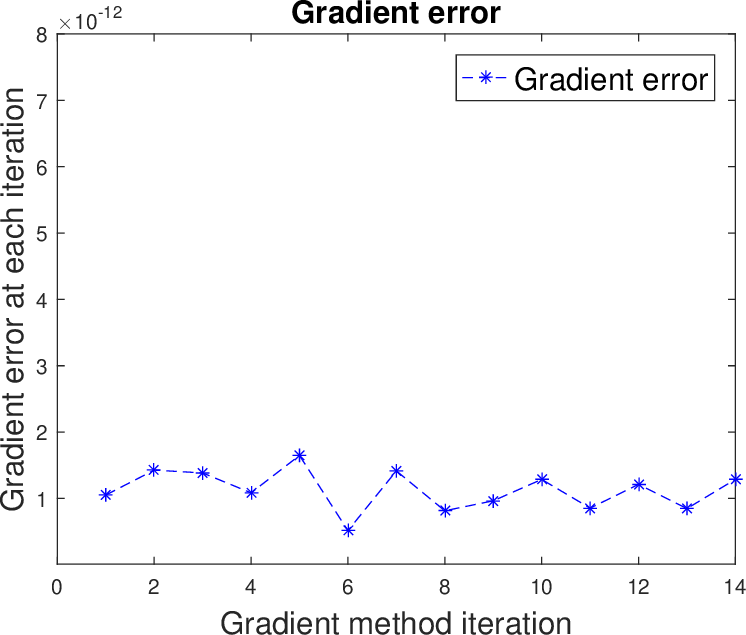}\includegraphics[height=4.5cm,width=4cm]{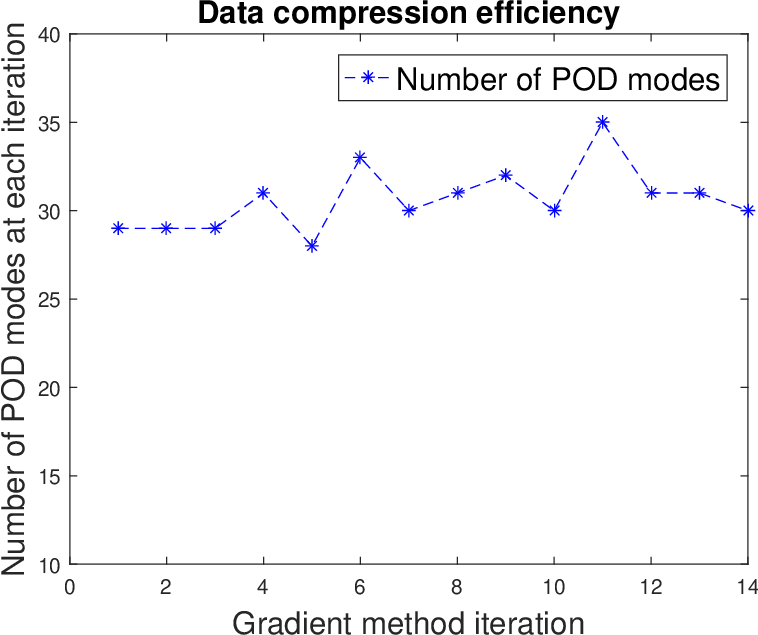}}
\caption{Nonlinear NSE test 2: Data compression performance }\label{NST3}
\end{figure}

\begin{remark}
It is worth noticing that the gradient errors in tests from Section \ref{pbda} and \ref{nda} are usually smaller than required, which means in practice we can apply the iPOD truncation thresholds more aggressively to further reduce the data storage required. One simple strategy is to set a truncation threshold and check the corresponding gradient error; if the gradient error is much less than is required, then accordingly increase the iPOD truncation thresholds to further reduce the data storage required. Such a scenario with smaller gradient errors does not contradict the analysis presented here; the analysis holds for even extreme scenarios, but many actual computations are regular cases that yield improved behavior.
\end{remark}

\section{Conclusion}\label{Conclusion}
We proposed and analyzed an inexact gradient method based on incremental proper orthogonal decomposition to deal with the data storage difficulty in PDE-constrained optimization, and used a data assimilation problem for a detailed illustration. A step-by-step analysis is presented to demonstrate that the inexact gradient method is memory-friendly and robust. In particular, we showed that, with an appropriate iPOD setup, the inexact gradient method is expected to achieve the same level of accuracy of the optimal solution while using a similar number of gradient iterations  compared to the usual gradient method. We considered a data assimilation problem for the demonstration of the key ideas. The extension of this work to other types of PDE-constrained optimization problems is interesting future work.

\bmhead{Acknowledgements} Xuejian Li is partially supported by National Science Foundation grants DMS-1722647 and DMS-2111421. John Singler is partially supported by National Science Foundation grant DMS-2111421. Xiaoming He is partially supported by National Science Foundation grants DMS-1722647 and DMS-2152609.

\begin{appendices}

\section{Proof of Theorem \ref{thm1}}\label{ThmHeat}
\begin{proof}
	To start, we write out the backward equation (\ref{eqnback}) equipped with the approximated data $\{{u}^{j}_{h,r}\}_{j=1}^n$ instead of $\{{u}^{j}_{h}\}_{j=1}^n$:
	\begin{equation}
		\label{eqnbackpod}
		\left\{\begin{aligned}
			&-\frac{ {u}_{h,r}^{*j+1}-{u}_{h,r}^{*j}}{\tau}+A^*{u}_{h,r}^{*j}=\hat {u}^{j+1}-{u}^{j+1}_{h,r}\quad \text{in~}V_h',\\ 
			&{u}_h^{*n}= 0\quad \text{in~} L^2(\Omega),
		\end{aligned}\right.
	\end{equation}
	for $j=n-1,\cdots,1,0$. Here, ${u}_{h,r}^{*j}$ represents the solution of (\ref{eqnbackpod}) at time $t_j$ with approximated data ${u}^{j+1}_{h,r}$.\\
	Let ${e}_{h}^{j}={u}_{h,r}^{*j}-{u}_{h}^{*j}$. Subtracting (\ref{eqnbackpod}) from (\ref{eqnback}) and writing the resulting error equation in weak form gives
	\iffalse
	\begin{equation}
		\label{eqnerror}
		\left\{\begin{aligned}
			&-\frac{ {e}_{h}^{j+1}-{e}_{h}^{j}}{\tau}+A^*{e}_{h}^{j}={u}^{j+1}_{h}-{u}^{j+1}_{h,r},\\
			&{e}_h^{n}= 0.
		\end{aligned}\right.
	\end{equation}
	%where ${e}_{h}^{j}={u}_{h,r}^{*j}-{u}_{h}^{*j}$.\\
	In sense of weak formulation, equation (\ref{eqnerror}) can be equivalently written as:
	\fi
	\begin{equation}
		\label{eqnequ}
		\left\{\begin{aligned}
			&\left\langle-\frac{ {e}_{h}^{j+1}-{e}_{h}^{j}}{\tau},v_h\right\rangle+\left\langle A^*{e}_{h}^{j},v_h\right\rangle=\left\langle{u}^{j+1}_{h}-{u}^{j+1}_{h,r},v_h\right\rangle \quad \forall v_h\in V_h,\\
			&{e}_h^{n}= 0.
		\end{aligned}\right.
	\end{equation}
	Take $v={e}_{h}^{j}$ in (\ref{eqnequ}). Then use the identity $(a-b)a=\frac{a^2-b^2}{2}+\frac{(a-b)^2}{2}$ on the term $\left\langle-\frac{ {e}_{h}^{j+1}-{e}_{h}^{j}}{\tau},{e}_{h}^{j}\right\rangle$ and the property 
	$\tau\left\langle A^*{e}_{h}^{j},{e}_{h}^{j}\right\rangle+C_1\tau \|{e}_{h}^{j}\|_{L^2(\Omega)}^2\geq \tau C_2\| {e}_{h}^{j}\|_{H^1(\Omega)}^2$ to obtain 
	\begin{equation}
		\begin{split}\label{errorsec}
			&\frac{\|e_h^{j}\|_{L^2(\Omega)}^2-\|e_h^{j+1}\|_{L^2(\Omega)}^2}{2}+\frac{\|e_h^{j}-e_h^{j+1}\|_{L^2(\Omega)}^2}{2}+\tau C_2\| {e}_{h}^{j}\|_{H^1(\Omega)}^2\\&\leq \tau C_1\|{e}_{h}^{j}\|_{L^2(\Omega)}^2+\tau\left\langle{u}^{j+1}_{h}-{u}^{j+1}_{h,r},{e}_{h}^{j}\right\rangle.
		\end{split} 
	\end{equation} 
	Applying standard inequalities, such as the Cauchy-Schwartz, Poincar\'e, and Young's inequalities, yields the following bound for the last term in the right side of (\ref{errorsec}):
	\begin{equation}\label{Inequality1}
		\begin{split}
			\tau\left\langle{u}^{j+1}_{h}-{u}^{j+1}_{h,r},{e}_{h}^{j}\right\rangle&\leq\tau \|{u}^{j+1}_{h}-{u}^{j+1}_{h,r}\|_{L^2(\Omega)}\|{e}_{h}^{j}\|_{L^2(\Omega)}\\&\leq C_P\tau\|{u}^{j+1}_{h}-{u}^{j+1}_{h,r}\|_{L^2(\Omega)}\|{e}_{h}^{j}\|_{H^1(\Omega)}\\
			&\leq \frac{C_2}{2}\tau\|{e}_{h}^{j}\|_{H^1(\Omega)}^2+\frac{C_P^2}{2C_2}\tau\|{u}^{j+1}_{h}-{u}^{j+1}_{h,r}\|_{L^2(\Omega)}^2.
		\end{split} 
	\end{equation} 
	
	\noindent Combining (\ref{errorsec}) and (\ref{Inequality1}) leads to
	\begin{equation}\label{Inequality2}
		\begin{split}
			&\frac{\|e_h^{j}\|_{L^2(\Omega)}^2-\|e_h^{j+1}\|_{L^2(\Omega)}^2}{2}+\frac{\|e_h^{j}-e_h^{j+1}\|_{L^2(\Omega)}^2}{2}+\frac{C_2}{2}\tau\|{e}_{h}^{j}\|_{H^1(\Omega)}^2\\&\leq C_1\tau\|{e}_{h}^{j}\|_{L^2(\Omega)}^2+\frac{C_P^2}{2C_2}\tau\|{u}^{j+1}_{h}-{u}^{j+1}_{h,r}\|_{L^2(\Omega)}^2.
		\end{split} 
	\end{equation} 
	Summing (\ref{Inequality2}) over $j=n-1,n-2,...,2,1,0$ gives us
	\begin{equation}\label{In3}
		\begin{split}
			&\|e_h^{0}\|_{L^2(\Omega)}^2+C_2\tau\sum_{j=n-1}^{0}\|{e}_{h}^{j}\|_{H^1(\Omega)}^2+\sum_{j=n-1}^{0}\|e_h^{j}-e_h^{j+1}\|_{L^2(\Omega)}^2\\&\leq 2C_1\tau\sum_{j=n-1}^{0}\|{e}_{h}^{j}\|_{L^2(\Omega)}^2+\frac{C_P^2}{C_2}\sum_{j=n-1}^{0}\tau\|{u}^{j+1}_{h}-{u}^{j+1}_{h,r}\|_{L^2(\Omega)}^2.
		\end{split} 
	\end{equation} 
	Denote $C_3=\frac{2C_1}{1-2C_1\tau}$. Using the assumption $1-2C_1\tau>0$ and the discrete Gronwall inequality on (\ref{In3}) results in
	\begin{equation}\label{In4}
		\begin{split}
			&\|e_h^0\|_{L^2(\Omega)}^2+C_2\tau\sum_{j=n-1}^{0}\|{e}_{h}^{j}\|_{H^1(\Omega)}^2+\sum_{j=n-1}^{0}\|e_h^{j}-e_h^{j+1}\|_{L^2(\Omega)}^2\\&\leq \frac{e^{C_3T}C_P^2}{C_2}\sum_{j=n-1}^{0}\tau\|{u}^{j+1}_{h}-{u}^{j+1}_{h,r}\|_{L^2(\Omega)}^2.
		\end{split} 
	\end{equation} 
	From (\ref{gradient}), we note $\nabla J({u}^{}_{0,h})=-{u}^{*0}_{h}+\gamma  {u}^{}_{0,h}=-{u}^{*0}_{h,r}+{u}_{h,r}^{*0}-{u}_{h}^{*0}+\gamma  {u}^{}_{0,h}=-{u}^{*0}_{h,r}+e_h^0+\gamma  {u}^{}_{0,h}$, which says that $e_h^0$ is indeed the gradient error. From POD theory, we know $\left(\sum_{j=n-1}^{0}\tau\|{u}^{j+1}_{h}-{u}^{j+1}_{h,r}\|_{L^2(\Omega)}^2\right)^{\frac{1}{2}}=\left(\sum_{j=n-1}^{0}\|\tau^{\frac{1}{2}}\vec{u}^{j+1}_{h}-\tau^{\frac{1}{2}}\vec{u}^{j+1}_{h,r}\|_{\mathbb{R}^m_M}^2\right)^{\frac{1}{2}}$ $=\|U-\widetilde{U}\|_{HS}$ with  $U: \mathbb{R}^{n}\mapsto  \mathbb{R}^m_M$ and the matrix $M$ is induced by the $L^2(\Omega)$ inner product.
	Thus, with (\ref{In4}), we have
	\begin{align*}\label{fpod}
		\|\xi\|_{L^2(\Omega)}=\|e_h^0\|_{L^2(\Omega)}& \leq  C_P\sqrt{\frac{e^{C_3T}}{C_2}}\left(\sum_{j=n-1}^{0}\|\tau^{\frac{1}{2}}\vec{u}^{j+1}_{h}-\tau^{\frac{1}{2}}\vec{u}^{j+1}_{h,r}\|_{\mathbb{R}^m_M}^2\right)^{\frac{1}{2}}\\&=C_P\sqrt{\frac{e^{C_3T}}{C_2}}\|U-\widetilde{U}\|_{HS}\leq C_P\sqrt{\frac{e^{C_3T}}{C_2}}\epsilon,
	\end{align*}
	which completes the proof. 
\end{proof}

\section{Proof of Theorem \ref{pl}}\label{AP1}

\begin{proof}
	Using the L-descent inequality (\ref{Lcondition}) and the inexact gradient method (\ref{ie4}), we deduce
	\begin{equation}\label{A1}
		\begin{split}
			&J(x^{(i+1)})\leq J(x^{(i)})+\left\langle J'(x^{(i)}),x^{(i+1)}-x^{(i)}\right\rangle+\frac{L}{2}\|x^{(i+1)}-x^{(i)}\|_X^2\\&=J(x^{(i)})+\left(\nabla J(x^{(i)}),x^{(i+1)}-x^{(i)}\right)_X+\frac{L}{2}\|x^{(i+1)}-x^{(i)}\|_X^2\\
			&=J(x^{(i)})+\left(\nabla J(x^{(i)}),-\kappa(\nabla J(x^{(i)})+\xi^{(i)})\right)_X+\frac{L\kappa^2}{2}\|\nabla J(x^{(i)})+\xi^{(i)}\|_X^2\\
			&=J(x^{(i)})-\kappa\|\nabla J(x^{(i)})\|_X^2-\kappa\left(\nabla J(x^{(i)}),\xi^{(i)}\right)_X+\frac{L\kappa^2}{2}\|\nabla J(x^{(i)})\|_X^2\\&~~~~+L\kappa^2\left(\nabla J(x^{(i)}),\xi^{(i)}\right)+\frac{L\kappa^2}{2}\|\xi^{(i)}\|_X^2\\
			&=J(x^{(i)})-(\kappa-\frac{L\kappa^2}{2})\|\nabla J(x^{(i)})\|_X^2+\left(L\kappa^2-\kappa\right)\left(\nabla J(x^{(i)}),\xi^{(i)}\right)_X\\&~~~~+\frac{L\kappa^2}{2}\|\xi^{(i)}\|_X^2.
		\end{split}
	\end{equation}
	We discuss the inequality (\ref{A1}) for two cases based on $\kappa$. For $\kappa=\frac{1}{L}$, we have from (\ref{A1})
	\begin{align}\label{A2}
		J(x^{(i+1)})&\leq J(x^{(i)})-\frac{1}{2L}\|\nabla J(x^{(i)})\|_X^2+\frac{1}{2L}\|\xi^{(i)}\|_X^2.
	\end{align}
	Applying the $\mu$-{\bf Polyak-Lojasiewicz} inequality (\ref{PL})  and subtracting $J(x^{\star})$ on both sides of (\ref{A2}), we obtain 
	\begin{align}\label{A4}
		J(x^{(i+1)})-J(x^{\star})&\leq \left(1-\frac{\mu}{L}\right)\left(J(x^{(i)})-J(x^{\star})\right)+\frac{1}{2L}\|\xi^{(i)}\|_X^2.
	\end{align}
	Unrolling the recursion (\ref{A4}) gives us 
	\begin{align}\label{A5}
		J(x^{(k)})-J(x^{\star})&\leq \left(1-\frac{\mu}{L}\right)^{k}\left(J(x^{(0)})-J(x^{\star})\right)+\sum_{i=0}^{k-1}\frac{1}{2L}\|\xi^{(i)}\|_X^2\left(1-\frac{\mu}{L}\right)^{i}\\
		&\leq \left(1-\frac{\mu}{L}\right)^{k}\left(J(x^{(0)})-J(x^{\star})\right)+\frac{1}{2\mu}\left(1-\left(1-\frac{\mu}{L}\right)^k\right)\epsilon^2.
	\end{align}
	This completes the first part of Theorem \ref{pl}.
	
	Second, we consider the case: $0<\kappa<\frac{1}{L}$ and $0<\mu (2\kappa-L\kappa^2)<1$.
	Applying the Cauchy-Schwartz inequality, the $\mu$-{\bf Polyak-Lojasiewicz} inequality (\ref{PL}), and the inequality (\ref{gradientbound}) from Lemma \ref{covv} on (\ref{A1}), we obtain
	\begin{equation}\label{A55}
		\begin{split}
			J(x^{(i+1)})&\leq J(x^{(i)})-\left(\kappa-\frac{L\kappa^2}{2}\right)\|\nabla J(x^{(i)})\|_X^2\\&+|L\kappa^2-\kappa|\|\nabla J(x^{(i)})\|_X\|\xi^{(i)}\|_X+\frac{L\kappa^2}{2}\|\xi^{(i)}\|_X^2\\
			&\leq J(x^{(i)})-\mu(2\kappa-L\kappa^2)\left(J(x^{(i)})-J(x^{\star})\right)\\&+\sqrt{2L}|L\kappa^2-\kappa|\sqrt{J(x^{(i)})-J(x^{\star})}\|\xi^{(i)}\|_X+\frac{L\kappa^2}{2}\|\xi^{(i)}\|_X^2.
			%&\leq J(x^{(i)})-\mu(2\kappa-L\kappa^2)\left(J(x^{(i)})-J(x^{\star})\right)+\left(J(x^{(i)})-J(x^{\star})\right)\|\xi^{(i)}\|_X+\frac{\mu(L\kappa^2-\kappa)^2}{2}\|\xi^{(i)}\|_X+\frac{L\kappa^2}{2}\|\xi^{(i)}\|_X^2
			%&\leq J(x^i)-\left(\kappa-\frac{L\kappa^2}{2}\right)\left(J(x^{(i)})-J(x^{\star})\right)+\frac{\rho(L\kappa^2-\kappa)^2}{4}\|\xi^{(i)}\|_X+\frac{L\kappa^2}{2}\|\xi^{(i)}\|_X^2.
		\end{split}
	\end{equation}
	Denote $\theta=1-\mu(2\kappa-L\kappa^2)$. Since we assume $ J $ is L-descent, $0<\kappa<1/L$, and $\|\xi^{(i)}\|_X<\frac{2-\kappa L}{4-\kappa L}\|\nabla J(x^{(i)})\|_X$, Theorem \ref{thm4} tells us that the function $J $ is decreasing. Then subtracting $J(x^{\star})$ on both sides of (\ref{A55}) and using that $J$ is decreasing along iterations gives
	\begin{equation}\label{A6}
		\begin{split}
			J(x^{(i+1)})-J(x^{\star})&\leq \sqrt{2L}|L\kappa^2-\kappa|\sqrt{J(x^{(0)})-J(x^{\star})}\|\xi^{(i)}\|_X+\frac{L\kappa^2}{2}\|\xi^{(i)}\|_X^2\\&~~~~+\theta\left(J(x^{(i)})-J(x^{\star})\right).
		\end{split}
	\end{equation}
	Unrolling the recursion (\ref{A6}) gives us 
	\begin{equation*}\label{A7}
		\begin{split}
			&J(x^{(k)})-J(x^{\star})\leq \theta^{k}\left(J(x^{(0)})-J(x^{\star})\right)\\&+\sum_{i=0}^{k-1}\left(\sqrt{2L}|L\kappa^2-\kappa|\sqrt{J(x^{(0)})-J(x^{\star})}\|\xi^{(i)}\|_X+\frac{L\kappa^2}{2}\|\xi^{(i)}\|_X^2\right)\theta^{i}\\
			&\leq \theta^{k}\left(J(x^{(0)})-J(x^{\star})\right)+\frac{1-\theta^k}{1-\theta}\left(\sqrt{2L}|L\kappa^2-\kappa|\sqrt{J(x^{(0)})-J(x^{\star})}\epsilon+\frac{L\kappa^2}{2}\epsilon^2\right),
		\end{split}
	\end{equation*}
	which completes the proof. 
\end{proof}
\section{Proof of Theorem \ref{sc}}\label{AP2}
\begin{proof}
	Based on the inexact gradient method (\ref{ie4}), we have
	\begin{equation}\label{BB1}
		\begin{split}
			&\|x^{(i+1)}-x^\star\|_X^2=\|x^{(i)}-\kappa\left(\nabla J(x^{(i)})+\xi^{(i)}\right)-x^\star\|_x^2\\
			%&=\|x^{(i)}-x^\star\|_X^2-2\kappa\left(\nabla J(x^{(i)})+\xi^{(i)},x^{(i)}-x^\star\right)_X+\kappa^2\|\nabla J(x^{(i)})+\xi^{(i)}\|_X^2\\
			&=\|x^{(i)}-x^\star\|_X^2-2\kappa\left(\nabla J(x^{(i)}),x^{(i)}-x^\star\right)_X-2\kappa\left(\xi^{(i)},x^{(i)}-x^\star\right)_X\\&~~~~+\kappa^2\|\nabla J(x^{(i)})\|_X^2+2\kappa\left(\nabla J(x^{(i)}),\xi^{(i)}\right)+\kappa^2\|\xi^{(i)}\|_X^2.
		\end{split}
	\end{equation}
	Recall $0<\kappa<\frac{1}{L}$, then there exists a constant $\eta>0$ satisfying $2\kappa-2\kappa^2L-\eta=0$. Using inequality (\ref{SC}), (\ref{gradientbound}) from Lemma \ref{cov}, the Cauchy-Schwartz inequality, and Young's inequality we bound (\ref{BB1}) as:
	\begin{equation}\label{B1}
		\begin{split}
			&\|x^{(i+1)}-x^\star\|_X^2
			\leq \|x^{(i)}-x^\star\|_X^2-2\kappa\left(J(x^{(i)})-J(x^{\star})+\frac{\mu}{2}\|x^{(i)}-x^\star\|_X^2\right)\\&+2\kappa\|x^{(i)}-x^\star\|_X\|\xi^{(i)}\|_X+2\kappa^2L\left(J(x^{(i)})-J(x^{\star})\right)+\eta\left(J(x^{(i)})-J(x^{\star})\right)\\&+\frac{\kappa^2 }{2L\eta}\|\xi^{(i)}\|_X^2+\kappa^2\|\xi^{(i)}\|_X^2\\
			&= \|x^{(i)}-x^\star\|_X^2-\left(\kappa\mu\|x^{(i)}-x^\star\|_X^2-2\kappa\|x^{(i)}-x^\star\|_X\|\xi^{(i)}\|_X\right.\\&\left.-\frac{\kappa^2 }{2L\eta}\|\xi^{(i)}\|_X^2-\kappa^2\|\xi^{(i)}\|_X^2\right)
		\end{split}
	\end{equation}
	%Recall  is the termination condition of iterations. 
	Recall $\|\xi^{(i)}\|_X\leq \epsilon$ for all $i$ and $\delta\geq\frac{\sqrt{2L\eta}+\sqrt{2L\eta+2L\eta\kappa\mu +\kappa\mu}}{\mu\sqrt{2L\eta}}\epsilon$. Since the iteration is not terminated, i.e., $ \|x^{(i)}-x^\star\|_X\geq\delta$, we then have $-\kappa\mu\|x^{(i)}-x^\star\|_X^2+2\kappa\|x^{(i)}-x^\star\|_X\|\xi^{(i)}\|_X+\frac{8\kappa^2 L}{\eta}\|\xi^{(i)}\|_X^2+\kappa^2\|\xi^{(i)}\|_X^2<0$, and the sequence $\{\|x^{(i)}-x^\star\|_X\}_{i\geq 0}$ is non-increasing. %for all $i$ satisfying  $\|x^{(i)}-x^\star\|_X\geq  \delta$. 
	Hence, we bound (\ref{B1}) as 
	\begin{equation}\label{B4}
		\begin{split}
			\|x^{(i+1)}-x^\star\|_X^2&\leq (1-\kappa\mu)\|x^{(i)}-x^\star\|_X^2+2\kappa\|x^{(0)}-x^\star\|_X\|\xi^{(i)}\|_X\\&+\frac{\kappa^2 }{2L\eta}\|\xi^{(i)}\|_X^2+\kappa^2\|\xi^{(i)}\|_X^2.
		\end{split}
	\end{equation}
	Denote $\theta=1-\mu\kappa$. %Choosing $\kappa$ such that $0<\theta<1$ and 
	Unrolling the recursion (\ref{B1}) gives us 
	\begin{align*}
		&\|x^{(k)}-x^\star\|_X^2\\&\leq \theta^k\|x^{(0)}-x^\star\|_x^2+\sum_{i=0}^{k-1}\theta^{i}\left(2\kappa\|x^{(0)}-x^\star\|_X\|\xi^{(i)}\|_X+\frac{\kappa^2 }{2L\eta}\|\xi^{(i)}\|_X^2+\kappa^2\|\xi^{(i)}\|_X^2\right)\\
		&\leq\theta^k\|x^{(0)}-x^\star\|_x^2+\frac{1-\theta^k}{1-\theta}\left(2\kappa\|x^{(0)}-x^\star\|_X\epsilon+\frac{\kappa^2 }{2L\eta}\epsilon^2+\kappa^2\epsilon^2\right),
	\end{align*}
	which completes the proof. 
\end{proof}
\iffalse
\section{Proof of Theorem \ref{scd}}\label{AP3}
\begin{proof}
	It is easy to verify that for $\mu$-strong convex and twice differentiable function with $\|\nabla^2 J(x)\|\leq L$, all  eigenvalues of $\nabla^2 J(x)$ are positive and in-between $\mu$ and $L$. We thus have
	\begin{equation}\label{CC1}
		\begin{split}
			\|x^{(i+1)}-x^\star\|_X&=\|x^{(i)}-\kappa\left(\nabla J(x^{(i)})+\xi^{(i)}\right)-x^\star\|_X\\&=\|x^{(i)}-x^\star-\kappa\left(\nabla J(x^{(i)})-\nabla J(x^\star)\right)-\kappa\xi^{(i)}\|_X\\
			&=\|\left(I-\nabla^2 J(\zeta)\right)(x^{(i)}-x^{\star})-\kappa \xi^{(i)}\|_X\\&\leq \|I-\nabla^2 J(\zeta)\|_{\mathscr{L}(X,X)}\|x^{(i)}-x^{\star}\|_X+\kappa\|\xi^{(i)}\|_X\\&\leq \theta \|x^{(i)}-x^{\star}\|_X+\kappa\|\xi^{(i)}\|_X.
		\end{split}
	\end{equation}
	Here, the $\zeta\in (x^{\star},x^{(i)})$ and $\theta=\max\{|1-\kappa \mu|, |1-\kappa L|\}$.\\
	Choosing $\kappa$ such that $0<\theta<1$ and unrolling the recursion (\ref{CC1}) gives us 
	\begin{align*}
		\|x^{(k)}-x^\star\|_X\leq \theta^k\|x^{(0)}-x^\star\|_X+\sum_{i=0}^{k-1}\kappa\theta^i\|\xi^{(i)}\|_X\leq\theta^k\|x^{(0)}-x^\star\|_X+\frac{1-\theta^k}{1-\theta}\kappa\epsilon,
	\end{align*}
	which complete the proof.
\end{proof}
\fi

\end{appendices}

%%===========================================================================================%%
%% If you are submitting to one of the Nature Portfolio journals, using the eJP submission   %%
%% system, please include the references within the manuscript file itself. You may do this  %%
%% by copying the reference list from your .bbl file, paste it into the main manuscript .tex %%
%% file, and delete the associated \verb+\bibliography+ commands.                            %%
%%===========================================================================================%%

\bibliography{data_assimilation_NEWN,sn-bibliography}% common bib file
%% if required, the content of .bbl file can be included here once bbl is generated
%%\input sn-article.bbl

\end{document}